\newcommand{\linkedurl}[1]{{\color{blue}\href{#1}{#1}}}
\newcommand{\linkedemail}[1]{{\makeatletter\color{blue}\href{mailto:#1}{#1}\makeatother}}
\newcommand{\Ignore}[1]{}
\newcommand{\freeze}[1]{}
\newcommand{\crossout}[1]{{\textcolor{red!20}{#1}}}
\newcommand{\highlight}[1]{{\color{blue}#1}}
\newcommand{\standout}[1]{{\textcolor{magenta}{#1}}}
\newcounter{margnote}[page]
\newcommand{\margnotemark}{\ensuremath{\text{\standout{\upshape\texttt{>\arabic{margnote}<}}}}}
\newcommand{\margnote}[2][]{
  \ifthenelse{
    \boolean{shownotes}
  }{
    \stepcounter{margnote}
    \margnotemark
    \marginpar{
      \texttt{
        \begin{minipage}{2cm}
          \raggedright\tiny
          \margnotemark{#1}: 
          #2
        \end{minipage}
      }
    }
  }{
  }
}
\newcommand{\mathnote}[2][]{
  \ifthenelse{
    \boolean{shownotes}
  }{
    \stepcounter{margnote}
    \margnotemark
    \text{
      \standout{
        \texttt{
          \tiny
          #2
        }
      }
    }
  }{
  }
}
\newcommand{\todo}[1]{\ifthenelse{\boolean{showtodo}}{\margnote[To do.]{#1}}{}}
\newcommand{\Todo}[1]{
  \ifthenelse{\boolean{showtodo}}{
    \setlength{\parindent}{0pt}\ 
    \par
    \begin{boxedminipage}{\textwidth}
      \texttt{#1}
    \end{boxedminipage}
    \par
  }{}}
\newcommand{\margincomment}[1]{
\ifthenelse{\boolean{showcomments}}{\marginpar{\tiny #1}}{}
}
\newcommand{\changes}[1]{
  \ifthenelse{\boolean{showchanges}} {{\highlight{#1}}} {#1}
}
\newcommand{\changefromto}[3][replace with]{
  \ifthenelse{\boolean{showchanges}}
  {{\crossout{#2}\margnote{#1}}{\highlight{#3}}}
  {#3\xspace}
}
\newcommand{\ChangePar}[2]{
  \ifthenelse{\boolean{showchanges}}
  {{\par$\mapsfrom$ \textcolor{red!20}{#1}}{\par$\mapsto$ \textcolor{blue}{#2}}}
  {\par #2}
}
\newcommand{\InsertPar}[1]{
  \ifthenelse{\boolean{showchanges}}
  {{\par$\mapsto$ \textcolor{blue}{#1}}}
  {\par #1}
}
\newcommand{\aposteriori}{{aposteriori}\xspace}
\newcommand{\Aposteriori}{{Aposteriori}\xspace}
\newcommand{\mathscript}
	   {\mathscr}
 \newcommand{\cA}{\ensuremath{\mathscript A}\xspace}
 \newcommand{\cE}{\ensuremath{\mathscript E}\xspace}
 \newcommand{\cI}{\ensuremath{\mathscript I}\xspace}
 \newcommand{\cS}{\ensuremath{\mathscript S}\xspace}
 \newcommand{\cT}{\ensuremath{\mathscript T}\xspace}
 \newcommand{\rR}{\ensuremath{\mathbb R}\xspace}
 \newcommand{\reals}{\rR}
 \newcommand{\R}[1]{\reals^{#1}}
 \newcommand{\C}[1]{\complex^{#1}}
 \newcommand{\one}{\ensuremath{\mathbb 1}\xspace}
 \newcommand{\charfun}[1]{\one_{#1}}
 \newcommand{\Tolto}{\smallsetminus}
 \newcommand{\take}{\Tolto}
 \newcommand{\inner}{\cdot}
 \newcommand{\Ga}{\ensuremath{\varGamma}\xspace}
 \newcommand{\Pg}{\ensuremath{\varPi}\xspace}
 \newcommand{\W}{\ensuremath{\varOmega}\xspace}
 \newcommand{\epsi}{\ensuremath{\epsilon}\xspace}
 \newcommand{\w}{\ensuremath{\omega}\xspace}
 \newcommand{\qp}[1]{\ensuremath{\!\left({#1}\right)}}
 \newcommand{\qpreg}[1]{\ensuremath{(#1)}}
 \newcommand{\qpbigg}[1]{\ensuremath{\bigg(#1\bigg)}}
 \newcommand{\qb}[1]{\ensuremath{\!\left[{#1}\right]}}
 \newcommand{\qc}[1]{\ensuremath{\left\{{#1}\right\}}}
 \newcommand{\qa}[1]{\ensuremath{\left\langle{#1}\right\rangle}}
 \newcommand{\opinter}[2]{\ensuremath{\left(#1,#2\right)}\xspace}
 \newcommand{\clinter}[2]{\ensuremath{\left[#1,#2\right]}\xspace}
 \newcommand{\opclinter}[2]{\ensuremath{\left(#1,#2\right]}\xspace}
 \newcommand{\powqp}[2]{\ensuremath{\qp{#2}^{\kern -.2em\lower .3ex\hbox{\scriptsize $#1$}}\kern-.3em}}
 \newcommand{\powqpreg}[2]{\ensuremath{\qpreg{#2}^{\kern -.2em\lower .3ex\hbox{\scriptsize $#1$}}\kern-.3em}}
 \newcommand{\pregsqrt}[1]{\powqpreg{1\!/\!2}{#1}}
 \newcommand{\abs}[1]{\ensuremath{\left|#1\right|}}
 \newcommand{\Norm}[1]{\ensuremath{\left\|#1\right\|}}
 \newcommand{\ltwop}[2]{\ensuremath{\qa{#1,#2}}}
 \newcommand{\duality}[2]{\ensuremath{\left\langle #1\,\vert\,#2\right\rangle}}
 \newcommand{\pairing}[2]{\ensuremath{\qp{#1,#2}}}
 \newcommand{\ensemble}[2]{\ensuremath{\left\{ #1:\;#2 \right\}}}
 \newcommand{\setof}[1]{\qc{#1}}
 \newcommand{\seq}[1]{\seqof{#1}}
 \newcommand{\seqs}[2]{\seq{#1}_{#2}}
 \newcommand{\sumifromto}[3]{\ensuremath{\sum_{#1=#2}^{#3}}}
 \newcommand{\jump}[1]{\ensuremath{\left\llbracket #1\right\rrbracket}}
 \newcommand{\rangefromto}[3]{\ensuremath{#1\integerbetween{#2}{#3}}}
 \renewcommand{\d}{\ensuremath{\,\operatorname{d}\!}}
\newcommand{\registered}%
    {\ensuremath{{}^{\bigcirc\!\;\!\!\!\!\!\!\!\;\text{\sc r}}}}
\newcommand{\AND}{\ensuremath{\text{ and }}}
\newcommand{\diam}{\operatorname{diam}}
\renewcommand{\div}{\operatorname{div}}
\newcommand{\esssup}{\operatorname{ess\,sup}}         
\newcommand{\inverse}[1]{\ensuremath{{#1}^{-1}}}
\newcommand{\fraclpf}[2]{(#1)/{#2}}
\newcommand{\fracl}[2]{{#1}/{#2}}
\newcommand{\Eye}[1]{
  \begin{bmatrix}
  \ifthenelse{#1>1}{
    \ifthenelse{#1>2}{
      \ifthenelse{#1>3}{
        1&0&\dotso&0
        \\
        0&1&\dotso&0
        \\
        \vdots&\vdots&\ddots&\vdots
        \\
        0&0&\dotso&1
      }{
        1&0&0
        \\
        0&1&0
        \\
        0&0&1
      }
    }{
      1&0
      \\
      0&1
    }
  }{
    1
  }
  \end{bmatrix}
}
\newcommand{\pd}[2]{\ensuremath{\partial_{#1}{#2}}\xspace} 
\newcommand{\dt}{\ensuremath{\d_t}}
\newcommand{\pdt}[1]{\pd t{#1}}                       
\renewcommand{\vec}[1]{\ensuremath{\boldsymbol{#1}}}
\newcommand{\geomat}[1]{\vec{#1}}
\newcommand{\mat}[1]{\geomat{#1}}
\newcommand{\boundary}{\partial}
 \newcommand{\CC}{\ensuremath{\operatorname C}\xspace}
 \newcommand{\HH}{\ensuremath{\operatorname H}\xspace}
 \newcommand{\LL}{\ensuremath{\operatorname L}\xspace}
 \newcommand{\cont}[1]{\ensuremath{\CC^{#1}}}
 \newcommand{\leb}[1]{\ensuremath{\LL_{#1}}}
 \newcommand{\sobh}[1]{\ensuremath{\HH^{#1}}}
 \newcommand{\sobhz}[1]{\sobh{#1}_0}
 \newcommand{\fespace}{\rV}
 \newcommand{\fes}[1]{\ensuremath{\fespace^{#1}}}
 \newcommand{\fesh}{\ensuremath{\fespace_h}}
 \newcommand{\EOC}{\ensuremath{\operatorname{EOC}}\xspace}
\newcommand{\Forall}{\:\forall\:}
\newcommand{\funk}[3]{\ensuremath{#1:#2\to#3}}
\renewcommand{\restriction}[2]{\left.#1\right|_{#2}}
\newcommand{\aka}[1]{(also known as {#1})\xspace}
\newcommand{\Program}[1]{\textsf{#1}\xspace}
\newcommand{\matlab}{\Program{Matlab\registered}}
\newcommand{\secsymbol}{\S}
\newcommand{\secref}[1]{\secsymbol\ref{#1}}
\providecommand{\ListParameters}{}
\renewcommand{\ListParameters}
{
	 \setlength{\topsep}{0em}
	 \setlength{\leftmargin}{0em}
         \setlength{\itemsep}{0ex}
	 \setlength{\parsep}{.5ex}
	 \setlength{\itemindent}{\labelsep}
	 \addtolength{\itemindent}{\labelwidth}
}
\newcounter{LetterListItem}
\renewcommand{\theLetterListItem}{(\alph{LetterListItem})}
\newenvironment{LetterList}%
{
	\begin{list}%
	{\theLetterListItem\ }%
	{\usecounter{LetterListItem}
	 \ListParameters
	}
}%
{\end{list}}
\newcounter{NumberListItem}
\renewcommand{\theNumberListItem}{\arabic{NumberListItem}}
{
	\begin{list}%
	{\theNumberListItem.\ }%
	{\usecounter{NumberListItem}%
	 \ListParameters
	}
}%
{\end{list}}
\newcounter{QuestionListItem}
\renewcommand{\theQuestionListItem}{\textbf{Question \arabic{QuestionListItem}}}
{
	\begin{list}%
	{\theQuestionListItem.\ }%
	{\usecounter{QuestionListItem}%
	 \ListParameters
	}
}%
{\end{list}}
\newcounter{RomanListItem}
\renewcommand{\theRomanListItem}{(\roman{RomanListItem})}
{
	\begin{list}%
	{\theRomanListItem\ }%
	{\usecounter{RomanListItem}
	 \ListParameters
	}
}%
{\end{list}}
\newcounter{StepsItem}
{
	\begin{list}%
	{Step \theStepsItem.\ }%
	{\usecounter{StepsItem}%
	 \ListParameters
	}
}%
{\end{list}}
\providecommand{\grad}{\nabla}
\renewcommand{\grad}{\nabla}
\providecommand{\ListParameters}{}
\renewcommand{\ListParameters}
{
	 \setlength{\topsep}{0em}
	 \setlength{\leftmargin}{0em}
         \setlength{\itemsep}{0ex}
	 \setlength{\parsep}{.5ex}
	 \setlength{\itemindent}{\labelsep}
	 \addtolength{\itemindent}{\labelwidth}
}
\newtheoremstyle{plain}
  {}
  {}
  {\mdseries\slshape}
  {\parindent}
  {\bfseries}
  {.}
  {.5em}
  {}
\newtheoremstyle{note}
  {}
  {}
  {}
  {\parindent}
  {\bfseries}
  {.}
  {.5em}
  {}
\newtheoremstyle{claim}
  {}
  {}
  {\mdseries\slshape}
  {}
  {\bfseries}
  {}
  {.5em}
  {}
\newtheoremstyle{exercise}
  {}
  {}
  {}
  {}
  {\bfseries}
  {.}
  {1em}
  {}
\newtheoremstyle{break}
  {}
  {}
  {}
  {}
  {\bfseries}
  {.}
  {\newline}
  {}
\newcommand{\pdfformat}[1]{
  \provideboolean{pdfoutput}
  \setboolean{pdfoutput}{#1}
  \ifthenelse{\boolean{pdfoutput}}%
	     {\typeout{using pdf}
               \usepackage{pdfsync}
	       \usepackage[pdftex]{graphicx,xcolor}
               \providecommand{\graphext}{pdf}
	       \renewcommand{\graphext}{pdf}
	       \providecommand{\graphextex}{pdf_t}
	       \renewcommand{\graphextex}{pdf_t}
	       \usepackage{epsfig}
	       \usepackage{tikz}
	       \usepackage{rotating}
	     }
	     {
	       \typeout{using eps}
	       \usepackage[dvips]{graphicx,xcolor}
               \providecommand{\graphext}{eps}
	       \renewcommand{\graphext}{eps}
	       \providecommand{\graphextex}{eps_t}
	       \renewcommand{\graphextex}{eps_t}
	       \usepackage{epsfig}
	       \usepackage{tikz}
	       \usepackage{rotating}
	     }
}
\renewcommand{\Aposteriori}{{A posteriori}\xspace}
\renewcommand{\aposteriori}{{a posteriori}\xspace}
\newcommand{\indtime}{\ensuremath{\theta}}
\newcommand{\honezw}{{\sobhz1(\W)}} 
\newcommand{\hmonew}{{\sobh{-1}(\W)}} 
\newcommand{\tm}{{t_m}}
\newcommand{\tn}{{t_n}}
\newcommand{\tno}{{t_{n-1}}}
\newcommand{\tnp}[1]{\ensuremath{t_{n+#1}}}
\newcommand{\taun}{\ensuremath{\tau_n}}
\newcommand{\In}{\ensuremath{(\tno,\tn]}}
\newcommand{\elln}{\ensuremath{l_n}}
\newcommand{\ellno}{\ensuremath{l_{n-1}}}
\newcommand{\fn}{\ensuremath{f^n}}
\newcommand{\uzero}{u_0}
\newcommand{\U}{\ensuremath{U}}
\newcommand{\un}{\U^n}
\newcommand{\uno}{\U^{n-1}}
\newcommand{\UZERO}{\U^0}
\newcommand{\interpol}[1]{I^{#1}}
\newcommand{\wn}{\ensuremath{\w^n}\xspace}
\newcommand{\wno}{\ensuremath{\w^{n-1}}\xspace}
\newcommand{\discelop}{A}
\newcommand{\disca}[1]{\discelop^{#1}}
\newcommand{\lproj}[1]{\lprojop^{#1}}
\newcommand{\bbil}[2]{B\qp{#1,#2}}
\providecommand{\enorm}[1]{\ensuremath{\Norm{#1}_a}}
\newcommand{\be}{\begin{equation}}
\newcommand{\ee}{\end{equation}}
\newcommand{\bea}{\begin{eqnarray}}
\newcommand{\eea}{\end{eqnarray}}
\newcommand{\beaa}{\begin{eqnarray*}}
\newcommand{\eeaa}{\end{eqnarray*}}
\def \w{{\bf w}}
\renewcommand{\seq}[1]{\ensuremath{\left\{ #1 \right\}}}
\newcommand{\el}{ \kappa \in \mathscript{T}}
\renewcommand{\diam}{\operatorname{diam}}
\newcommand{\dint}{\text{\rm int}}
\newcommand{\mean}[1]{\{ \kern -1.6mm \{#1\} \kern -1.6mm \}}
\newcommand{\ha}{\frac{1}{2}}
\newcommand{\ud}{\mathrm{d}}
\newcommand{\ndg}[1]{| \kern -.25mm \|{#1}| \kern -.25mm \|}
\newcommand{\ltwo}[2]{\|{#1}\|_{#2}}
\newcommand{\amin}{\alpha_{\flat}}
\newcommand{\amax}{\alpha_{\sharp}}
\newcommand{\amink}{a_{\flat}}
\newcommand{\amaxk}{a_{\sharp}}
\newcommand{\maxcondloc}{K_{\mat a}^{\mathscript{T}}}
\newcommand{\estfunc}{\mathscript{E}_{\operatorname{IP}}}
\renewcommand{\Forall}{\ensuremath{\quad\text{for all }}}
\renewcommand{\AND}{\ensuremath{\quad\text{and}\quad}}
\renewcommand{\rangefromto}[3]{\ensuremath{#1=#2,\dotsc,#3}\xspace}
\renewcommand{\w}{\ensuremath{w}}
\renewcommand{\cont}[1]{\ensuremath{C\ifthenelse{\equal{#1}{0}}{}{^{#1}}}}
\renewcommand{\div}{\ensuremath{\nabla\cdot}}
\newcommand{\dgrad}{\ensuremath{\nabla_{\!\!\cT}}}
\newcommand{\discan}{\disca n}
\newcommand{\discano}{\disca {n-1}}
\newcommand{\opera}{\cA}
\newcommand{\operan}{\opera^n}
\newcommand{\operano}{\opera^{n-1}}
\renewcommand{\In}{\ensuremath{\clinter\tno\tn}}
\renewcommand{\fespace}{\ensuremath{S}}
\newcommand{\ccrfes}[1]{\ensuremath{\check S}_{#1}}
\newcommand{\hopfes}[1]{\ensuremath{\cS_{#1}}}
\newcommand{\hopfesh}{\ensuremath{\cS}}
\providecommand{\enorm}[1]{\ndg{#1}}
\newcommand{\espace}[1]{\leb2(#1;\hopfesh)}
\newcommand{\conpart}{\ensuremath{_c}}
\newcommand{\conpartplus}{\ensuremath{_{c{\scriptscriptstyle{\boldsymbol +}}}}}
\newcommand{\dispart}{\ensuremath{_d}}
\newcommand{\dispartplus}{\ensuremath{_{d{\scriptscriptstyle{\boldsymbol +}}}}}
\newcommand{\epsic}{\ensuremath{\epsi\conpart}}
\newcommand{\operproj}{\Pg}
\renewcommand{\lproj}[1]{\ensuremath{\operproj^{#1}}}
\providecommand{\tnp}[1]{\ensuremath{t_{n+#1}}}
\newcommand{\bilform}{B}
\newcommand{\bbilp}[3]{\ensuremath{\bilform^{#1}\qp{#2,#3}}}
\newcommand{\bbilpsub}[4]{\ensuremath{\bilform^{#1}_{#2}\qp{#3,#4}}}
\newcommand{\ipdg}{\text{IPDG}\xspace}
\newcommand{\indoperasymbol}{\zeta}
\newcommand{\indoperafor}[1]{\indoperasymbol'_{#1}}
\newcommand{\indoperaback}[1]{\indoperasymbol''_{#1}}
\newcommand{\indoperamesh}[1]{\indoperasymbol^\circ_{#1}}
\newcommand{\indopera}[1]{\indoperasymbol_{#1}}
\renewcommand{\indtime}[1]{\theta_{#1}}
\newcommand{\indcoarse}[1]{\gamma_{#1}}
\newcommand{\indfosc}[1]{\beta_{#1}}
\newcommand{\indftimeosc}[1]{\beta_{#1}}
\newcommand{\indnonconf}[1]{\delta_{#1}}
\newcommand{\indnonconfel}[1]{\tilde\delta_{#1}}
\newcommand{\indellipt}[1]{\varepsilon_{#1}}
\newcommand{\indelliptplus}[1]{{\varepsilon^{\smash{\scriptscriptstyle{\boldsymbol+}}}_{#1-1}}}
\newcommand{\parest}[1]{\eta_{\mathrm{p},#1}}
\newcommand{\ellest}[1]{\eta_{\mathrm{e},#1}}
\newcommand{\difflipA}[2]{\sqrt{\mat a(#2)\inverse{\mat a(t_{#1})}}
      -\sqrt{\inverse{\mat a(#2)\!}\!\!\mat a(t_{#1})}}
\newcommand{\difflipa}[1]{\lambda_{\mat a,#1}}
\renewcommand{\C}[1]{\ensuremath{C_{\operatorname{#1}}}}
\newcommand{\data}[1]{
  \ensuremath{
    \fraclpf{\interpol{#1}\U^{#1-1}-\U^{#1-1}}{\tau_n}
    +
    f^{#1}-f
    }
}
\newcommand{\EI}{\operatorname{EI}}
\theoremstyle{plain}
\newtheorem{Lemma}[subsection]{Lemma}
\newtheorem{Theorem}[subsection]{Theorem}
\newtheorem{Problem}[subsection]{Problem}
\theoremstyle{note}
\newtheorem{Definition}[subsection]{Definition}
\newtheorem{Remark}[subsection]{Remark}
\numberwithin{equation}{section}
\newcommand{\fulltitle}{A posteriori error control for discontinuous {Galerkin} methods for parabolic problems\xspace}
\newcommand{\headtitle}{A POSTERIORI CONTROL IN PARABOLIC DISCONTINUOUS GALERKIN\xspace}
\newcommand{\authorone}{Emmanuil~H.~Georgoulis\xspace}
\newcommand{\addressone}{
  Department of Mathematics,
  University of Leicester,
  University Road,
  Leicester,
  LE1 7RH, United Kingdom\xspace}
\newcommand{\emailone}{\linkedemail{Emmanuil.Georgoulis@mcs.le.ac.uk}\xspace}
\newcommand{\authortwo}{Omar~Lakkis\xspace}
\newcommand{\addresstwo}{
  Department of Mathematics,
  University of Sussex,
  Falmer near Brighton,
  East Sussex,
  {GB-BN1~9RF}, England UK\xspace}
\newcommand{\emailtwo}{\linkedemail{o.lakkis@sussex.ac.uk}\xspace}
\newcommand{\authorthree}{Juha~M.~Virtanen\xspace}
\newcommand{\addressthree}{
  Department of Mathematics,
  University of Leicester,
  University Road,
  Leicester,
  LE1 7RH, United Kingdom\xspace}
\newcommand{\emailthree}{\linkedemail{jmv8@leicester.ac.uk}\xspace}
\newcommand{\amssubjectclassification}{65M15, 65M60, 65N30\xspace}
\title{\fulltitle}
{\title[\headtitle]{\fulltitle}
\author{\authorone}
\address{
  \authorone\newline
  \addressone
}
\email{\emailone}
\author{\authortwo}
\address{
  \authortwo\newline
  \addresstwo
}
\email{\emailtwo}
\author{\authorthree}
\address{
  \authorthree\newline
  \addressthree
}
\email{\emailthree}
\subjclass[2000]{\amssubjectclassification}
}
\begin{document}
\frenchspacing
\maketitle
\renewcommand{\thefootnote}{\fnsymbol{footnote}}
\footnotetext[1]{\addressone, \emailone, \emailthree}
\footnotetext[4]{\addresstwo, \emailtwo}
\renewcommand{\thefootnote}{\arabic{footnote}}
\begin{abstract}
We derive energy-norm \aposteriori error bounds for an Euler
time-stepping method combined with various spatial discontinuous
Galerkin schemes for linear parabolic problems.  For accessibility, we
address first the spatially semidiscrete case, and then move to the
fully discrete scheme by introducing the implicit Euler time-stepping.
All results are presented in an abstract setting and then illustrated
with particular applications.  This enables the error bounds to hold
for a variety of discontinuous Galerkin methods, provided that
energy-norm \aposteriori error bounds for the corresponding elliptic
problem are available.  To illustrate the method, we apply it to the
interior penalty discontinuous Galerkin method, which requires the
derivation of novel \aposteriori error bounds.  For the analysis of
the time-dependent problems we use the elliptic reconstruction
technique and we deal with the nonconforming part of the error by
deriving appropriate computable \aposteriori bounds for it.  
\changes{
  We illustrate the theory with a series of numerical experiments
  indicating the reliability and efficiency of the derived
  \aposteriori estimates.
}
\end{abstract}
%
\keywords{
Finite element,
discontinuous Galerkin,
error analysis,
\aposteriori,
time dependent problems,
parabolic PDE's,
upper bounds,
nonconforming methods,
time stepping,
Euler scheme
}
%
\section{Introduction}
\label{sec:introduction}
Adaptive methods for partial differential equations (PDE's) of
evolution type have become a staple in improving the efficiency in
large scale computations.  Since the 1980's many adaptive methods have
been increasingly based on \emph{\aposteriori error estimates}, which
provide a sound mathematical case for \emph{adaptive mesh refinement},
which can be decomposed in spatially and temporally local \emph{error indicators}.
In the context of
parabolic equations, \aposteriori error estimates have been derived
for various norms since early 1990's
\cite{eriksson-johnson:1,picasso:98}.  Inspired by the milestones set
recently for the mathematical theory of convergence for adaptive
methods in elliptic problems
\cite{morin-nochetto-siebert:2002:review,binev-dahmen-devore:04,
cascon-kreuzer-nochetto-siebert:08}, there has been a recent push for
similar results for parabolic problems calling to a closer
understanding of \aposteriori error estimates \cite[e.g.]{chen-jia:03,
verfuerth:03, bergam-bernardi-mghazli:05, lakkis-makridakis:06}.  Most
results in this area cover simple time-stepping schemes and a
conforming space discretization.  The extant literature on a
posteriori error control for nonconforming spatial methods can be
grouped in a handful of works
\cite{sun-wheeler:05,ern-proft:05,yang-chen:06,nicaise-soualem:05,
chenyanping-yangjiming:07}.  In \cite{sun-wheeler:05} \aposteriori
$\leb2(\sobh1)$-norm error bounds for a spatially semidiscrete method via
interior penalty discontinuous Galerkin (\ipdg) methods are derived
and used heuristically in the implementation of the fully discrete
scheme.  In \cite{ern-proft:05,yang-chen:06} $\leb2(\leb2)$-norm error
bounds for \ipdg are obtained using duality techniques, while in
\cite{nicaise-soualem:05}, \aposteriori error bounds are presented for
a fully discrete method consisting of a backward Euler time-stepping
and linear Crouzeix--Raviart elements in space.  Note that none of the
papers in the literature, to our knowledge, cover the case of
\aposteriori energy-norm error bounds for fully discrete schemes with
discontinuous Galerkin methods, which is the chief objective of our paper.

\emph{Discontinuous Galerkin} (DG) methods are an important family of
nonconforming finite element methods for elliptic, parabolic and
hyperbolic problems dating back to 1970's and early 1980's
\cite{nitsche:71,reed-hill:73,baker:77:non-conf,wheeler:78,arnold:82}.  DG methods have
undergone substantial development in the recent years \cite[e.g., and
references therein]{cockburn-karniadakis-shu:01:dg_book,arnold-brezzi-cockburn:02:unified,riviere-wheeler-girault:99,riviere-wheeler:00,houston-schwab-suli:02}.
The practical interest in DG methods owes to their flexibility in mesh
design and adaptivity, in that they cover meshes with hanging nodes and/or
locally varying polynomial degrees.  DG methods are thus ideally
suited for $hp$-adaptivity and provide good local conservation
properties of the state variable.  Moreover, in DG methods the local
elemental bases can be chosen freely for the absence of interelement
continuity requirements, yielding very sparse---in many cases even
diagonal---mass matrices even with high precision quadrature. Note
also that DG methods are popular due to their very good stability properties
in transport- or convection-dominated problems
\cite{cockburn-karniadakis-shu:01:dg_book}; the \aposteriori error
analysis of convection-dominated problems is, however, beyond the scope of our
study and we concentrate on diffusion-only parabolic equations.

Our main results are \aposteriori error bounds in the \emph{energy
norm} for a family of fully discrete approximations of the following
PDE problem---in \secref{sec:preliminaries} we gather the functional
analysis notation and background.
\begin{Problem}[linear parabolic boundary-initial value problem]
  \label{pbm:linear-diffusion}
  Given an open (possibly curvilinear) polygonal domain $\W\subseteq\R
  d$, $d=2,3$, a real number $T>0$, two (generalized)
  functions
  \begin{equation}
    f\in\leb\infty(0,T;\leb2(\W))
    \AND
    \mat a\in\leb\infty(\W\times\opinter0T)^{d\times d},
  \end{equation}
  such that $\mat a(x,t)$ is symmetric positive definite for almost
  all $(x,t)\in\W\times\clinter0T$, find a function
  $u\in\leb2(0,T;\honezw)$,
  \begin{equation}
    \pdt{u}\in\leb\infty(0,T;\sobh{-1}(\W))
  \end{equation}
  and such that
  \begin{equation}
    \label{pde}
    \begin{gathered}
      \pdt u-\div(\mat a\grad u)=f
      \text{ on }\W\times\opclinter0T,
      \\
      u(0)=u_0\text{ on }\W,
      \AND
      \restriction{u}{\boundary\W}(t)=0,
      \text{ for }t\in\opclinter0T.
    \end{gathered}
  \end{equation}
\end{Problem}
In \secref{sec:preliminaries} we propose a class of numerical methods
for solving this problem.  These methods consist in a backward Euler
time-stepping scheme in combination with various choices of spatial DG
methods.  Our emphasis is on the widely applied \ipdg method
\cite{arnold:82,riviere-wheeler-girault:99,houston-schwab-suli:02}.

We consider the notation of Problem \ref{pbm:linear-diffusion} to be
valid throughout the paper and $u$ denotes the solution of problem
(\ref{pde}).  Although the assumption $f\in\leb\infty(0,T;\leb2(\W))$
may be weakened---provided \aposteriori error estimates for the
corresponding spatial finite element method can be obtained for such
weak data---we refrain from doing it for simplicity's sake.  In fact,
we consider $f$ to be piecewise continuous in time with a finite
number of time-discontinuities and with the implied constraints on the
time partition, to be discussed in \secref{sec:fully-discrete-solution}.
The matrix-valued function $\mat a(\cdot,t)$, for each
$t\in\opinter0T$ is allowed to have jump discontinuities; the set of
spatial discontinuities of $\mat a$ will be considered to be aligned
with the finite element meshes.  For simplicity, we shall assume that
$\mat a$ is continuous in time, but a finite number of discontinuities
can be accounted for easily, as long as these occur at the points of
the time partition in the fully discrete scheme.  The PDE (\ref{pde})
is assumed to be uniformly elliptic in the sense that the supremum and
the infimum of the set
\begin{equation}
  \ensemble{\fracl{\qp{\mat a(x,t)\vec\zeta}\inner\vec\zeta}
    {\abs{\vec\zeta}^2}}
    {\vec\zeta\in\R d,(x,t)\in\W\times\opclinter0T}
\end{equation}
are both positive real numbers.  As for the boundary values, we remark
that our approach can be appropriately modified in order to extend
homogeneous to general time-dependent Dirichlet boundary values.
Under the assumptions made so far, we have that the solution to
(\ref{pde}) exists and satisfies $u\in\cont0(0,T;\sobhz1(\W))$ and
$\pdt u\in\leb2(0,T;\leb2(\W))$
\cite{ladyzenskaja-solonnikov-uralceva:book}.

In line with a unified approach to \aposteriori error analysis for
elliptic-problem DG methods \cite{ainsworth:05:synthesis,ainsworth:07,
carstensen-gudi-jensen:08} our discussion will be presented first in
an abstract setting. Our results are then shown to be applicable to a
wide class of DG methods \emph{provided that \aposteriori error bounds
for the corresponding steady-state problem are available}.

We stress that, although we focus on \aposteriori error bounds for
spatial DG methods, our abstract results can be applied to a wider
class of nonconforming methods (other than DG methods), provided they
satisfy certain requirements. More specifically, given a particular
nonconforming finite element space $\fesh$, assume that:
\begin{LetterList}
  \item
    for each $Z\in\fesh$ it is possible to decompose it as
    \begin{equation}
      Z=Z\conpart+Z\dispart\text{ such that }Z\conpart\in\honezw\cap\fesh,
    \end{equation}
    where $Z\conpart$ and $Z\dispart$ are called $Z$'s
    \emph{conforming part} and the \emph{nonconforming part},
    respectively.  This decomposition is an analytic device and is not
    needed for computational purposes.  The only requirement on this
    decomposition is the ability to quantify certain norms of
    $Z\dispart$ in terms of $Z$, as found in the literature
    \cite[e.g.]{becker-hansbo-larson:03, karakashian-pascal:03,houston-schoetzau-wihler:07}, as well as our Lemma~\ref{h_oswald}.
  \item
    Given a function $z\in\honezw$, and let $Z\in\fesh$ be the
    corresponding Ritz-projection via the finite element method, it is
    possible to bound the norm of the \emph{error} $Z-z$, using
    \aposteriori error estimators for the steady state problem.
\end{LetterList}

A key tool in our \aposteriori error analysis is the \emph{elliptic
  reconstruction technique} \cite{makridakis-nochetto:03}.  Roughly
speaking, the elliptic reconstruction technique, as far as energy
estimates are concerned, allows to neatly separate the time
discretization analysis form the spatial one.  This technique, which
has been adapted to tackle fully-discrete schemes via energy methods
for conforming methods \cite{lakkis-makridakis:06}, is extended in
this work to the nonconforming setting, to all methods that meet the
two requirements above.  Briefly said, the idea of elliptic
reconstruction---denoting by $u$ the solution of (\ref{pde}) and by
$U$ that of the discrete problem---consists in building an auxiliary
function $\w$, called the \emph{elliptic reconstruction} of $U$, which
satisfies two key properties: (a) a PDE-like relation binds the
\emph{parabolic error} $u-w$ with data quantities only involving $w-U$
and the problem's data, $f$, $\mat a$, and $u_0$, (b) the function $U$
is the Ritz projection of $\w$ onto $\fesh$.  Note that $\w$ is an
analysis-only device that, despite its name, it is not a computable
object. Fortunately, computing $\w$ is not needed in practice, as it
does not appear in the resulting \aposteriori bounds.

We believe that it is possible to obtain similar \aposteriori error
estimates, for each single method at hand, by working directly, i.e.,
without using an elliptic reconstruction technique, but this will
inevitably lead to further complications which may render the analysis
quite involved, especially for the fully discrete scheme.  This
prejudice of ours is testified by the somewhat surprising lack of
previous rigorous results in the literature.  Finally, we point out
that it is possible to follow a similar approach to ours in order to
derive \aposteriori error estimates in lower order functional spaces
such as $\leb\infty(0,T;\leb2(\W))$.

We remark that new estimators arise in the derivation of fully
discrete \aposteriori error bounds, due to the time-dependent
diffusion tensor considered in this work, compared to
\cite{lakkis-makridakis:06} where only time-independent diffusion
coefficients are addressed.

The following is an outline of this article.  After introducing the
notation and the method in \secref{sec:preliminaries}, the elliptic
reconstruction is used to develop an abstract framework for spatially
semidiscrete schemes in \secref{sec:semidiscrete} and their fully
discrete counterpart in \secref{sec:fully-discrete-bounds}.  The
actual error estimators for each particular method are then
consequences of our abstract framework and specific elliptic error
estimators such as the ones presented in
\cite{becker-hansbo-larson:03,
  karakashian-pascal:03,bustinza-gatica-cockburn:05,
  houston-schoetzau-wihler:07,houston-suli-wihler:08,ern-stephansen:08}.
Moreover, in \secref{sec:various-DG} we prove \aposteriori bounds for the
corresponding steady state problem of (\ref{pbm:linear-diffusion}) for
\ipdg, thus extending existing results \cite{becker-hansbo-larson:03,
  karakashian-pascal:03,houston-schoetzau-wihler:07,houston-suli-wihler:08}
to the case of general (non-diagonal) diffusion tensor, with minimal
regularity assumptions on the exact solution
\cite[cf.]{ern-stephansen:08}.  These \aposteriori bounds are then
combined with the general framework presented in in
\secref{sec:semidiscrete} and \secref{sec:fully-discrete-bounds} to
deduce fully computable bounds for the DG-approximation error of the
parabolic problem.  \changes{Last in \secref{sec:computer-experiments}
  we summarize results from computer experiments aimed at exhibiting
  the reliability (derived theoretically) and efficiency of the error
  estimators in the special case of the \ipdg method.}
\section{Preliminaries}
\label{sec:preliminaries}
\subsection{Functional analysis tools}
\label{sec:mod}
Given an open subset $\omega\subseteq\R d$, we denote by
$L^p(\omega)$, $1\le p\le \infty$, the Lebesgue spaces of functions
with summable $p$-powers on $\omega$.  The corresponding norms
$\|\cdot\|_{L^p(\omega)}$; the norm of $\leb2(\omega)$ will be denoted
by $\ltwo{\cdot}{\omega}$ for brevity; by $\langle
\cdot,\cdot\rangle_{\omega}$ we write the standard $\leb2$-inner product
on $\omega$.  When $\omega=\W$ we omit the subindex.

We denote by $\sobh s(\omega)$, the standard Hilbert Sobolev space of
index $s\in\reals$; $\sobh1_0(\omega)$ signifies the subspace of
$\sobh1(\omega)$ of functions with vanishing trace on the boundary
$\partial\omega$.  The Poincaré--Friedrichs inequality
\begin{equation}
  \label{eqn:Poincare-Friedichs}
  \ltwo v\W\leq\C{PF}\ltwo{\grad v}\W
  \text{ for }v\in\sobhz1(\W)
\end{equation}
turns the $\sobh1(\W)$ seminorm into a norm on $\sobhz1(\W)$.
We consider thus $\ltwo{\grad\cdot}\W$ to be the norm on
$\sobhz1(\W)$.  We will use also $\sobh{-1}(\W)$, the dual
space of $\sobhz1(\W)$, equipped with the duality brackets
$\duality \cdot\cdot$.  Namely, if $f\in\sobh{-1}(\W)$ then
for each $\phi\in\sobhz1(\W)$ its value on $\phi$ is denoted by
$\duality g\phi$ which coincides with $\ltwop g\phi$ if
$g\in\leb2(\omega)$.  Thus the norm of $g$ is given by
\begin{equation}
  \Norm g_{\sobh{-1}(\W)}
  :=\sup_{\phi\in\sobhz1(\W)\take\setof0}
  \frac{\ltwop g\phi}{\Norm{\grad\phi}}.
\end{equation}

The duality pairing $\pairing{\sobh{-1}}{\sobhz1}$ allows us to
define, for each fixed $t\in\opclinter0T$ the \emph{elliptic operator}
$\funk{\opera(t)}{\sobhz1(\W)}{\sobh{-1}(\W)}$ where
\begin{equation}
  \duality{\opera(t) v}\phi:=\ltwop{\mat a(t)\grad v}{\grad \phi}
  \qp{=\int_\W\qp{\mat a(x,t)\grad v(x)}\inner{\grad \phi(x)}\d x}
\end{equation}
$\text{ for }\phi,v\in\sobhz1(\W)$. Here, and
throughout the paper, we use the shorthand $f(t)=f(\cdot,t)$, for a
function $\funk f{[0,T]\times\W}\reals$.  Note that thanks to the
uniform parabolic assumption given by (\ref{growth}) and the
Lax--Milgram Theorem, the definition of the operator $\opera(t)$ is
well defined and yields an isomorphism between $\sobh{-1}(\W)$ and
$\sobhz1(\W)$ \cite{evans:PDE}. In other words, the operator
$\opera(t)$ induces a bounded coercive bilinear form
\begin{equation}
  \label{eqn:bilinear-form-on-H10}
  (v,w)\in\sobhz1(\W)\times\sobhz1(\W)\mapsto\duality{\opera(t)v}w\in\reals,
\end{equation}
which we will extend later to a larger nonconforming space.  \emph{We
  stress from the outset that although the bilinear form in
  (\ref{eqn:bilinear-form-on-H10}) will be extended later to larger
  spaces, the operator $\opera(t)$ will not and it will only act on
  $\sobhz1(\W)$ functions throughout the discussion.}

For $1\le p\le +\infty$, we also define the spaces
$L^p(0,T,X)$, with $X$ being a real Banach space with norm
$\|\cdot\|_X$, consisting of all measurable functions $v: [0,T]\to X$,
for which
\begin{equation}
  \begin{gathered}
    \|v\|_{L^p(0,T;X)}
    :=\Big(\int_0^T \|v(t)\|_{X}^p\ud t\Big)^{1/p}<+\infty,
    \quad \text{for}\quad 1\le p< +\infty, \\
    \|v\|_{L^{\infty}(0,T;X)}:=\esssup_{0\le t\le
      T}\|v(t)\|_{X}<+\infty,\quad \text{for}\quad  p = +\infty.
  \end{gathered}
\end{equation}
Finally, we denote by $C(0,T;X)$ the space of continuous
functions $v: [0,T]\to X$ with norm $\|v\|_{C(0,T;X)}:=\max_{0\le
t\le{T}}\|v(t)\|_{X}<+\infty$.
\hyphenation{pa-ra-me-tri-zed}
\subsection{Finite element spaces}
Let $\mathscript{T}$ be a subdivision of $\W$ into disjoint open sets,
which we call elements.  We assume $\mathscript{T}$ to be parametrized
by mappings $F_{\kappa}$, for each $\el$, where
$F_{\kappa}:\hat{\kappa}\to\kappa$ is a diffeomorphism and
$\hat{\kappa}$ is the reference element or reference square. The above
mappings are such that $\bar{\W}=\cup_{\el}\bar{\kappa}$.  We often
use the word \emph{mesh} for subdivision, and we say that a mesh is
\emph{regular} if it has no hanging nodes; otherwise the mesh is
\emph{irregular}.  Unless otherwise stated, we allow the mesh to be
$1$-irregular, i.e., for $d=2$, there is at most one hanging node per
edge, typically its center; for $d=3$ a corresponding concept is
available.

For an integer $p\geq1$, we denote by $\mathscript P_p(\hat{\kappa})$,
the set of all polynomials on $\hat{\kappa}$ of degree $p$, if
$\hat{\kappa}$ is the reference simplex, or of degree $p$ in each
coordinate direction, if $\hat{\kappa}$ is the reference cube.  We
consider the discontinuous Galerkin finite element space
\begin{equation}
  \label{eq:FEM-spc}
  S=S^p(\mathscript{T}):=\{v\in \leb2(\W):v_{\kappa}\circ
  F_{\kappa}
  \in \mathscript{P}_p(\hat{\kappa}),\,\el\}.
\end{equation}

By $\Ga$ we denote the union of all sides of the elements of the
subdivision $\mathscript{T}$ (including the boundary sides). We think
of $\Ga$ as the union of two disjoint subsets
$\Ga=\Ga_{\partial}\cup\Ga_{\dint}$, where
$\Ga_{\partial}$ is the union of all boundary sides.

Let two elements $\kappa^+,\kappa^-\in\cT$ have a common a side
$e:=\bar{\kappa}^+\cap\bar{\kappa}^-\subset\Ga_{\dint}$.  Define
the outward normal unit vectors $\vec n^+$ and $\vec n^-$ on $e$
corresponding to $\partial\kappa^+$ and $\partial\kappa^-$,
respectively. For functions $q:\W\to\mathbb{R}$ and
$\vec\phi:\W\to\mathbb{R}^d$ uniformly continuous on each of
$\kappa^\pm$, but possibly discontinuous across $e$, we define the
following quantities. For $q^+:=q|_{\partial\kappa^+}$,
$q^-:=q|_{\partial\kappa^-}$ and
$\vec\phi^+:=\vec\phi|_{\partial\kappa^+}$,
$\vec\phi^-:=\vec\phi|_{\partial\kappa^-}$, we set
\begin{equation}
  \begin{aligned}
    \mean{q}_e&:=\ha(q^+ + q^-),
    \quad
    &\mean{\vec\phi}_e&:=\ha(\vec\phi^+ + \vec\phi^-),
    \\
    \jump{q}_e&:=q^+\vec n^++q^-\vec n^-,
    \quad
    &\jump{\vec\phi}_e&:=\vec\phi^+\cdot \vec n^++\vec\phi^-\cdot \vec n^-;
  \end{aligned}
\end{equation}
if $e$ is a boundary side ($e\subset\Ga_{\partial}$) these
definitions are modified to
\begin{equation}
  \mean{q}_e:=q^+,\
  \mean{\vec\phi}_e:=\vec\phi^+ ,\
  \jump{q}_e:=q^+\vec n^+,\
  \jump{\vec\phi}_e:=\vec\phi^+\cdot \vec n^+.
\end{equation}
We introduce the \emph{mesh-size} as the function $h:\W\to
\mathbb{R}$, by $h(x)=\diam\kappa$, if $x\in \kappa$ and $h(x)=
\mean{h}$, if $x\in \Ga$.  The \emph{shape-regularity} of the
subdivision $\cT$ is defined as
\begin{equation}
  \mu(\cT):=\sup_{\kappa\in\cT}\frac{h_\kappa}{r_\kappa},
\end{equation}
where $r_\kappa$ is the radius of the largest ball that fits
entirely in $\kappa$.

We shall use the gradient's \emph{regular part operator}, $\dgrad$,
of an elementwise differentiable function $v$ defined by
\begin{equation}
  (\dgrad v)|_{\kappa}:=\nabla (v|_{\kappa})\text{ for }\el.
\end{equation}
Note that the full distributional gradient, $\grad v$, consists of an
extra term taking into account the jumps of $v$ across the edges (with
a $-$ sign for historic reasons):
\begin{equation}
  \grad v=\dgrad v-\jump v \delta_{\Ga_\dint},
\end{equation}
with $\delta_{\Ga_\dint}$ denoting the Dirac distribution on the interior skeleton $\Ga_{\dint}$.
Finally, we consider some shorthand notation for quantities involving
the diffusion tensor $a$.  In particular, we define the elementwise
constant functions $\amink,\amaxk:\W\times [0,T]\to\mathbb{R}$ by
\begin{equation}
  \amaxk(\cdot,t)|_{\kappa}:=\||\sqrt{a}(\cdot,t)|_2\|_{L^{\infty}(\kappa)}^{2}
  \AND
  \amink(\cdot,t)|_{\kappa}:=\||(\sqrt{a}(\cdot,t))^{-1}|_2\|_{L^{\infty}(\kappa)}^{-2},
\end{equation}
for $\el$, and $\amaxk=\mean{\amaxk}$, $\amink=(\mean{1/\amink})^{-1}$, on
$\Ga$, where $|\cdot|_2$ denotes the Euclidean-induced matrix norm.
Finally, let
\begin{equation}\label{growth}
  \amax(t):=\max_{x\in\W}\amaxk(x,t)
  \AND
  \amin(t)=\min_{x\in\W}\amink(x,t).
\end{equation}
\subsection{Spatial DG discretization}
\label{dg_heat}
Introduce the \emph{DG space}
$\mathscript{S}:=S+\sobhz1(\W)$, and a corresponding
\emph{DG bilinear} form $B:\mathscript{S}\times
\mathscript{S}\to \mathbb{R}$, which we assume to be an extension of
the bilinear form defined by (\ref{eqn:bilinear-form-on-H10}), viz.,
\begin{equation}
  \label{cons}
  B(t;v,z)=\duality{\opera(t)v}z
  \text{ for all }v,z\in \sobhz1(\W),t\in\opclinter0T.
\end{equation}
Though $B$ is time-dependent, we do not write it explicitly in the
semidiscrete case and omit the $t$.

The space $\cS$ is equipped with a \emph{DG norm}, denoted
$\ndg{\cdot}$ and depending on the method at hand, which extends the
energy norm, i.e.,
\begin{equation}
  \label{norm_cons}
  \ndg{v}=\ltwo{\sqrt{a(\cdot,t)}\nabla v}{}
  \quad\text{for all}\ v\in \sobhz1(\W),
\end{equation}
for $t\in[0,T]$.  Also here, the norm is time-dependent, but this
dependence is not explicitly written.  A norm equivalence between the
energy norm $\ndg{\cdot}$ and $\ltwo{\sqrt{a(\cdot,t)}\nabla \cdot}{}$
in $\sobhz1(\W)$, uniformly with respect to $t$, suffices for all
the bounds presented below to hold, modulo a multiplicative constant;
but, we eschew this much generality for clarity's sake.

The semidiscrete DG method in space for problem (\ref{pde}), reads as
follows:
 \begin{equation}
   \label{dg_semi}
   \begin{split}
     &\text{Find $U\in\cont{0,1}(0,T;\fesh)$ such that}
     \\
     &\langle \partial_t
     U,V\rangle+B(U,V)=\langle f,V\rangle
     \quad\text{ for }V\in S, t\in[0,T].
   \end{split}
\end{equation}

We stress that these assumptions are satisfied by many DG methods for
second order elliptic problems available in the literature, possibly
by using inconsistent formulations
\cite{arnold-brezzi-cockburn:02:unified}. Moreover, (\ref{cons}) and
(\ref{norm_cons}) are satisfied by \ipdg (along with the corresponding
energy norm) considered below as a paradigm.

Assumption (\ref{cons}) implies the consistency of the bilinear form
$B$ on $\sobhz1(\W)$, i.e.,
\begin{equation}
  \label{consistency}
  \langle \partial_t u,v\rangle+B(u,v)=\langle f,v\rangle,\quad
  \text{for all}\ v\in \sobhz1(\W),
\end{equation}
where $u$ is the exact (weak) solution to the initial-boundary
value problem (\ref{pde}).  Note that this is the same as writing
\begin{equation}
  \pdt u+\opera u=f.
\end{equation}
\subsection{Fully discrete solution}
\label{sec:fully-discrete-solution}
To further discretize in time, consider an increasing time partition
$\seq{\tn}_{\rangefromto n0N}$, and the corresponding time-steps
$\taun=\tn-\tno$, for $\rangefromto n1N$. For each $\rangefromto n0N$, we consider that $\fes n$
is a DG finite element space of fixed degree $p$ built on a partition
$\cT_n$, which may be different from $\cT_{n-1}$ when $n\geq1$.  In
\secref{sec:mesh-interaction-decomposition}, we will say more about the
sequence of meshes and the compatibility relations among them.

Let $\fn(x):=f(x,\tn)$ and let $\UZERO$ be the projection (or an
interpolation) of $\uzero$ onto the finite element space $\fes 0$.  We
say that $\seq{\un}_{\rangefromto n0N}$ is a \emph{fully discrete
solution} of (\ref{pde}) if, for each $\rangefromto n1N$ we have that
$\un\in\fes n$ satisfies
\begin{equation}
  \label{eqn:fully-discrete-Euler-pure}
  \ltwop{\fraclpf{\un-\uno}{\taun}}{V}
  +
  \bbilp n\un V=\ltwop{\fn}{V}\Forall V\in\fes n,
\end{equation}

Since the elliptic operator $\opera$ (and the bilinear form
$\bilform$) depend on time, in the fully discrete setting, we denote
their value at time $t$ by $\opera(t)$ and $\bilform(t)$,
respectively and when $t=\tn$ we take $\opera^n:=\opera(\tn)$
and $B^n:=B(\tn)$.

Noting that the term $\uno$ can be replaced by $\lproj n\uno$, where
$\funk{\lproj n}{\leb2(\W)}{\fes n}$ is the orthogonal projection, we
consider a slightly more general situation where $\lproj n\uno$ in
(\ref{eqn:fully-discrete-Euler-pure}) is replaced by $\interpol
n\uno$;  here $\funk{\interpol n}{\fes{n-1}}{\fes n}$ is a general
\emph{data transfer} operator, depending on the particular
implementation.  The operator $\interpol n$ may coincide with $\lproj
n$, but it may be an interpolation operator for example.
The general fully discrete Euler scheme then reads
\begin{equation}
  \label{eqn:fully-discrete-Euler}
  \ltwop{\fraclpf{\un-\interpol n\uno}{\taun}}{V}
  +
  \bbilp n\un V=\ltwop{\fn}{V}\Forall V\in\fes n.
\end{equation}

We have taken $\fn=f(\tn)$, but we could take a more general
approximation than $f(\tn)$, for example, a good choice is also given
by $\fn:=\int_\tno^\tn f(s)\d s$, for which a suitable modification of
our arguments leads to similar results.
\section{Abstract \aposteriori bounds for the semidiscrete problem}
\label{sec:semidiscrete}
We derive next an abstract a posteriori error bound for the quantity
\begin{equation}
  \ltwo{u-U}{\leb2(0,T;\mathscript{S})}
  :=\Big(\int_0^T\ndg{u(t,\cdot)-U(t,\cdot)}^2\Big)^{1/2},
\end{equation}
where $\ndg{\cdot}$ denotes the appropriate (space) energy norm.

In the a posteriori error analysis below, we shall make use of the
idea of elliptic reconstruction operators introduced in
\cite{makridakis-nochetto:03} for the semidiscrete problem and
extended to fully discrete (conforming-in-space) methods in \cite{lakkis-makridakis:06}.
\begin{Definition}[elliptic reconstruction and discrete operator]
\label{elliptic_recon}
Let $U$ be the (semidiscrete) DG solution to the problem
(\ref{dg_semi}). We define the \emph{elliptic reconstruction}
$w\in \sobhz1(\W)$ of $U$ to be the
solution of the elliptic problem
\begin{equation}\label{ell_rec}
  B(w, v) = \langle AU-\Pg f +f, v\rangle\quad \Forall v\in
  \sobhz1(\W),
\end{equation}
where $\Pg:\leb2(\W)\to S$ denotes the
orthogonal $\leb2$-projection on the finite element space $S$, and
$A:S\to S$ denotes the discrete DG operator defined by
\begin{equation}\label{ell_rec2}
  \langle A Z, V \rangle = B(Z,V)
  \Forall V\in S,
\end{equation}
for each $Z\in S$.  Note that this is valid on $\clinter0T$.
\end{Definition}
\begin{Remark}[the role of the elliptic reconstruction]
  \label{remark_recon}
  The elliptic reconstruction is well defined. Indeed, $AU\in S$ is the
  unique $\leb2$-Riesz representation of a linear functional on the
  finite dimensional space $\fesh$ and the existence and uniqueness of
  (weak) solution of (\ref{ell_rec}), with data
  $AU-\lproj{}f+f\in\leb2(\W)$, follows from the Lax--Milgram Theorem in
  view of (\ref{cons}).

  The key property of $w$ is that the DG solution $U$ of the semidiscrete
  time-dependent problem (\ref{dg_semi}) is also the DG solution of
  the steady-state boundary-value problem (\ref{ell_rec}). Indeed, let
  $W\in S$ be the DG-approximation to $w$, defined by the finite
  dimensional linear system
  \begin{equation}
    B(W,V)=\langle AU-\Pg f +f, V\rangle,
  \end{equation}
  for all $V\in S$, which implies $B(W,V)=\langle AU, V\rangle=B(U,V)$
  for all $V\in S$, i.e., $W=U$.
\end{Remark}
\begin{Definition}[error, elliptic and parabolic parts]
  We shall decompose the error as follows:
  \begin{equation}\label{splitting_semi}
    e:=U-u=\rho-\epsilon,\ \text{where}\ \epsilon:=w-U,\ \text{and}\
    \rho:=w-u,
  \end{equation}
  where $w=w(t)$ denotes the elliptic reconstruction of $U=U(t)$ at
  time $t\in[0,T]$.  We call $\epsi$ the \emph{elliptic error} and
  $\rho$ the \emph{parabolic error}.
\end{Definition}
\begin{Lemma}[semidiscrete error relation]
\label{energy_arg}
Let $u$ be the solution of Problem \ref{pbm:linear-diffusion}, $U$
denote the solution of the DG scheme (\ref{dg_semi}). Then, we have
\begin{equation}\label{en_ar}
  \langle \partial_t e,v\rangle+B(\rho,v)=0\Forall v\in \sobhz1(\W).
\end{equation}
\end{Lemma}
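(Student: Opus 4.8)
The plan is to obtain (\ref{en_ar}) from a short, direct computation that plays off the three relations available at this stage: the consistency (\ref{consistency}) of the exact solution $u$, the defining property (\ref{ell_rec})--(\ref{ell_rec2}) of the elliptic reconstruction $w$ of $U$, and the semidiscrete DG scheme (\ref{dg_semi}). Fix $v\in\sobhz1(\W)$. Inserting the splittings $e=U-u$ and $\rho=w-u$ from (\ref{splitting_semi}) and using bilinearity of $B$, I would rewrite $\langle\partial_t e,v\rangle+B(\rho,v)$ as $\langle\partial_t U,v\rangle+B(w,v)-\bigl(\langle\partial_t u,v\rangle+B(u,v)\bigr)$. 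By the consistency identity (\ref{consistency}) the bracketed term equals $\langle f,v\rangle$, so the claim reduces to showing that $\langle\partial_t U,v\rangle+B(w,v)-\langle f,v\rangle=0$ for every $v\in\sobhz1(\W)$.

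Next I would substitute the definition (\ref{ell_rec}) of the elliptic reconstruction, namely $B(w,v)=\langle AU-\Pg f+f,v\rangle$; the two occurrences of $\langle f,v\rangle$ cancel and what remains is $\langle\partial_t U,v\rangle+\langle AU,v\rangle-\langle\Pg f,v\rangle$. The key observation is that this is nothing but the $\leb2$-pairing of $v$ with the residual of the discrete equation, which vanishes on $S$. Since $\partial_t U(t)\in S$ (as $U$ is $C^1$ in time with values in $S$) and $AU\in S$, and since $\Pg$ is the self-adjoint orthogonal $\leb2$-projection onto $S$, each pairing against $v$ coincides with the pairing against $\Pg v\in S$: $\langle\partial_t U,v\rangle=\langle\partial_t U,\Pg v\rangle$, $\langle AU,v\rangle=\langle AU,\Pg v\rangle$, and $\langle\Pg f,v\rangle=\langle f,\Pg v\rangle$. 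Therefore the quantity equals $\langle\partial_t U,\Pg v\rangle+\langle AU,\Pg v\rangle-\langle f,\Pg v\rangle$, which, using (\ref{ell_rec2}) to replace $\langle AU,\Pg v\rangle$ by $B(U,\Pg v)$, is exactly the left-hand side minus the right-hand side of the scheme (\ref{dg_semi}) tested with the admissible function $V=\Pg v$, hence is zero.

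I do not anticipate a genuine obstacle; the only point that needs a little care is the passage between the two test spaces $\sobhz1(\W)$ and $S$, and this works precisely because the elliptic reconstruction (\ref{ell_rec}) was built with the correction $-\Pg f+f$ tailored for this projection argument. I would also record, for completeness, that $\langle\partial_t e,v\rangle$ makes sense because $\partial_t u\in\leb2(0,T;\leb2(\W))$ under the standing assumptions on Problem~\ref{pbm:linear-diffusion}, that $w\in\sobhz1(\W)$ so that $B(\rho,v)$ and $B(w,v)$ are well defined, and that every identity used holds pointwise in $t$, so that (\ref{en_ar}) holds for (almost) every $t\in[0,T]$ as intended.
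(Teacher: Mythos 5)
Your proposal is correct and follows essentially the same route as the paper's proof: the same chain of equalities using consistency (\ref{consistency}), the elliptic reconstruction identity (\ref{ell_rec})--(\ref{ell_rec2}), the orthogonality and self-adjointness of $\Pg$, and finally the scheme (\ref{dg_semi}) tested with $\Pg v$. The only difference is that you spell out explicitly the intermediate step $\langle AU,\Pg v\rangle=B(U,\Pg v)$, which the paper leaves implicit in its final equality.
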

\begin{proof}
For each $v\in\sobhz1(\W)$ we have
\begin{equation}\label{rho-term}
  \begin{aligned}
    \langle \partial_t e,v\rangle+B(\rho,v)
    =& \langle \partial_t U,v\rangle + B(w,v) -\langle f,v\rangle
    \\
    =&
    \langle \partial_t U,v\rangle
    +\langle AU-\Pg f+f,v\rangle
    -\langle f,v\rangle
    \\
    =&
    \langle \partial_t U,\Pg v\rangle
    +\langle AU,\Pg v\rangle
    -\langle f,\Pg v\rangle
    =0,
  \end{aligned}
\end{equation}
where in the first equality we used (\ref{consistency}); in the second
and fourth equalities, we made use of Definition \ref{elliptic_recon};
in the third equality the properties of the orthogonal
$\leb2$-projection onto $S$ are used; finally, the last equality follows
from (\ref{dg_semi}).
\end{proof}
\begin{Definition}[conforming-nonconforming decomposition]
  In the theory developed below, we shall consider the decomposition
  of the DG solution $U\in S$ into conforming (continuous) and
  nonconforming (discontinuous) parts as follows
  \begin{equation}\label{decomp}
    U = U_c+U_d,
  \end{equation}
  where $U_c\in S_c:=\sobhz1(\W)\cap S$ and $U_d:=U-U_c\in S$. Note
  that at this point we do \emph{not} specify any particular
  decomposition, thus keeping the choice of such a decomposition at
  our disposal. Let
  \begin{equation}\label{eps_c}
    e_c:=e-U_d=U_c-u\in \sobhz1(\W), \quad\text{and}\quad
    \epsilon_c:=\epsilon+U_d=w-U_c\in \sobhz1(\W).
  \end{equation}
\end{Definition}
\begin{Theorem}[long-time \aposteriori error bound for DG]
\label{apost_semi_thm}
Let $u$ and $U$ be the exact weak solution of (\ref{pde}) and the DG
solution of the problem (\ref{dg_semi}), respectively.  Let $w$ be the
elliptic reconstruction of $U$ as in Definition \ref{elliptic_recon}.
Assuming (\ref{cons}) and (\ref{norm_cons}), and that a decomposition
of the form (\ref{decomp}) is available, then the following error
bound holds
\begin{equation}\label{apost_semi}
\begin{aligned}
  \ltwo{U-u}{\leb2(0,T;\mathscript{S})}\le &
  3\ltwo{w-U}{\leb2(0,T;\mathscript{S})}+2\ltwo{U_d}{\leb2(0,T;\mathscript{S})}
  \\
  &+2\ltwo{\partial_t U_d/\sqrt{\amin}}{{\leb2(0,T;\sobh{-1}(\W))}}
  +2\ltwo{u_0-U_c(0)}{}.
\end{aligned}
\end{equation}
\end{Theorem}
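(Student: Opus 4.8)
The plan is to reduce the estimate \eqref{apost_semi} to a classical parabolic energy argument carried out on the \emph{conforming} part of the error alone. Write $e:=U-u=e_c+U_d$ with $e_c:=U_c-u\in\sobhz1(\W)$, as in \eqref{eps_c}. Since $\ndg{\cdot}$ is a norm, the triangle inequality applied at each $t$ followed by Minkowski's inequality in $\leb2(0,T)$ gives
\begin{equation}
  \ltwo{U-u}{\leb2(0,T;\mathscript{S})}
  \le\ltwo{e_c}{\leb2(0,T;\mathscript{S})}+\ltwo{U_d}{\leb2(0,T;\mathscript{S})}.
\end{equation}
So everything reduces to bounding $\ltwo{e_c}{\leb2(0,T;\mathscript{S})}$, which is exactly the quantity for which the error identity of Lemma~\ref{energy_arg} is useful, since $e_c\in\sobhz1(\W)$ is an admissible test function there.

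First I would test \eqref{en_ar} with $v=e_c$. Using $\partial_t e=\partial_t e_c+\partial_t U_d$ and $\rho=w-u=e_c+\epsilon_c$ (see \eqref{splitting_semi} and \eqref{eps_c}), this yields
\begin{equation}
  \langle\partial_t e_c,e_c\rangle+B(e_c,e_c)=-\langle\partial_t U_d,e_c\rangle-B(\epsilon_c,e_c).
\end{equation}
By \eqref{cons} and \eqref{norm_cons}, $B(e_c,e_c)=\ndg{e_c}^2$; and since $\epsilon_c\in\sobhz1(\W)$, the Cauchy--Schwarz inequality for the $\sqrt a$-weighted $\leb2$ inner product gives $|B(\epsilon_c,e_c)|\le\ndg{\epsilon_c}\,\ndg{e_c}$. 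For the remaining term I would use the uniform ellipticity built into $\amink$, namely $\ltwo{\grad e_c}{}\le\amin(t)^{-1/2}\ndg{e_c}$, to obtain $|\langle\partial_t U_d,e_c\rangle|\le\ltwo{\partial_t U_d/\sqrt{\amin}}{\sobh{-1}(\W)}\,\ndg{e_c}$. Then, using $\langle\partial_t e_c,e_c\rangle=\tfrac12\frac{\d}{\d t}\ltwo{e_c}{}^2$ and absorbing a factor $\tfrac12\ndg{e_c}^2$ via Young's inequality,
\begin{equation}
  \frac{\d}{\d t}\ltwo{e_c}{}^2+\ndg{e_c}^2
  \le\left(\ltwo{\partial_t U_d/\sqrt{\amin}}{\sobh{-1}(\W)}+\ndg{\epsilon_c}\right)^2.
\end{equation}
Integrating over $(0,T)$, discarding $\ltwo{e_c(T)}{}^2\ge0$, taking square roots (via $\sqrt{p^2+q^2}\le p+q$) and applying Minkowski in $\leb2(0,T)$ on the right yields
\begin{equation}
\begin{split}
  \ltwo{e_c}{\leb2(0,T;\mathscript{S})}
  \le{}&\ltwo{e_c(0)}{}
  +\ltwo{\partial_t U_d/\sqrt{\amin}}{\leb2(0,T;\sobh{-1}(\W))}\\
  &+\ltwo{\epsilon_c}{\leb2(0,T;\mathscript{S})}.
\end{split}
\end{equation}

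To conclude, I would substitute $e_c(0)=U_c(0)-u_0$ and, from \eqref{eps_c}, $\epsilon_c=w-U_c=(w-U)+U_d$, so that $\ndg{\epsilon_c}\le\ndg{w-U}+\ndg{U_d}$ and hence $\ltwo{\epsilon_c}{\leb2(0,T;\mathscript{S})}\le\ltwo{w-U}{\leb2(0,T;\mathscript{S})}+\ltwo{U_d}{\leb2(0,T;\mathscript{S})}$. Feeding this back into the displays above gives \eqref{apost_semi}. In fact the argument produces the smaller constants $1,2,1,1$ rather than $3,2,2,2$, so there is comfortable slack; the stated constants survive even cruder intermediate estimates. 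Note also that no Gr\"onwall-type step is used, which is precisely why the bound is uniform in $T$ (the ``long-time'' estimate of the title).

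The step needing most care is the time-regularity underpinning the energy identity $\langle\partial_t e_c,e_c\rangle=\tfrac12\frac{\d}{\d t}\ltwo{e_c}{}^2$ together with the fundamental theorem of calculus on $(0,T)$: one needs $e_c\in\leb2(0,T;\sobhz1(\W))$ with $\partial_t e_c\in\leb2(0,T;\sobh{-1}(\W))$, and then invokes the Lions--Magenes lemma. This follows from $u\in\cont0(0,T;\sobhz1(\W))$, $\partial_t u\in\leb2(0,T;\leb2(\W))$ and $U\in\cont{0,1}(0,T;\fesh)$, \emph{provided} the conforming--nonconforming splitting $U=U_c+U_d$ is realised through a fixed, time-independent, bounded linear operator on $\fesh$; then $\partial_t U_d\in\leb2(0,T;\leb2(\W))\subset\leb2(0,T;\sobh{-1}(\W))$, so all the pairings above are legitimate and the right-hand side of \eqref{apost_semi} is finite and computable. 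Such a decomposition, together with the quantitative control of $\ndg{U_d}$ it requires, is exactly what Lemma~\ref{h_oswald} supplies.
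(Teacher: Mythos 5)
Your proof is correct and follows essentially the same energy argument as the paper: test the error relation of Lemma~\ref{energy_arg} with the conforming error part $e_c$, use (\ref{cons}), (\ref{norm_cons}), Cauchy--Schwarz and Young, integrate in time, and finish with triangle inequalities based on the decomposition (\ref{decomp}). The only (harmless) variation is that you keep $B(e_c,e_c)$ rather than $B(\rho,\rho)$ as the coercive term and reassemble via $U-u=e_c+U_d$ instead of $e=\rho-\epsilon$, which yields the slightly sharper constants $1,2,1,1$ and therefore the stated bound (\ref{apost_semi}) a fortiori.
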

\begin{proof}
Set $v=e_c$ in (\ref{en_ar}); then, in view of (\ref{eps_c}), we have
\begin{equation}
  \label{basic_error} \langle \partial_t e_c,e_c\rangle
  +B(\rho,\rho)=B(\rho,\epsilon_c)-\langle\partial_t U_d,e_c\rangle.
\end{equation}

Recalling (\ref{cons}), (\ref{norm_cons}) and that $\rho,e_c\in
\sobhz1(\W)$ for every $t\in[0,T]$, using the Cauchy--Schwarz
inequality,
and the duality pairing $\pairing{\sobh{-1}}{\sobh1_0}$, we arrive to
\begin{equation}
  \label{semi_one}
  \ha\ud_t\ltwo{e_c}{}^2
  +\ndg{\rho}^2\le\ndg{\rho}\,\ndg{\epsilon_c}+\ltwo{\partial_t
    U_d}{\sobh{-1}(\W)}\ltwo{\nabla e_c}{}.
\end{equation}
Also, (\ref{eps_c}) implies
\begin{equation}
  \ltwo{e_c}{\sobh{1}(\W)}\le
  (\ndg{\epsilon_c}+\ndg{\rho})/\sqrt{\amin}.
\end{equation}
Setting $I_1:=\ndg{\epsilon_c}$,
$I_2:=\ltwo{\partial_t{U_d}/\sqrt{\amin}}{\sobh{-1}(\W)}$ in
(\ref{semi_one}) and rearranging lead to
\begin{equation}
  \ha\ud_t\ltwo{e_c}{}^2 +\ndg{\rho}^2\le\ndg{\rho}(I_1+I_2)+I_1I_2,
\end{equation}
which implies
\begin{equation}
  \ud_t\ltwo{e_c}{}^2 +\ndg{\rho}^2\le 4(I_1^2+I_2^2).
\end{equation}
Integration on $[0,T]$ and taking square roots yields
\begin{equation}
  \ltwo{\rho}{\leb2(0,T;\mathscript{S})} \le \ltwo{e_c(0)}{}
  +2\ltwo{\epsilon_c}{\leb2(0,T;\mathscript{S})}
  +2\ltwo{\partial_t U_d/\sqrt{\amin}}{{\leb2(0,T;\sobh{-1}(\W)})}.
\end{equation}
The assertion follows using triangle inequality on
(\ref{splitting_semi}) and (\ref{eps_c}).
\end{proof}
\begin{Theorem}[short-time \aposteriori error bound for DG]
\label{apost_semi_thm_l1}
Let the assumptions of Theorem \ref{apost_semi_thm} hold.
Then the following error
bound holds:
\begin{equation}\label{apost_semi_l1}
\begin{aligned}
  \ltwo{u-U}{\leb2(0,T;\mathscript{S})}\le &
  (\sqrt{3/2}+1)\ltwo{w-U}{\leb2(0,T;\mathscript{S})}
  +\sqrt{3/2}\ltwo{U_d}{\leb2(0,T;\mathscript{S})}
  \\
  &+2\ltwo{\partial_t U_d}{L^1(0,T;\leb2(\W))}
  +\sqrt{2}\ltwo{u_0-U_c(0)}{}.
\end{aligned}
\end{equation}
\end{Theorem}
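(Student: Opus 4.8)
The plan is to recycle the error identity \eqref{basic_error} obtained in the proof of Theorem~\ref{apost_semi_thm} by testing \eqref{en_ar} with $v=e_c$ and invoking \eqref{eps_c}, namely
\begin{equation*}
  \langle \partial_t e_c,e_c\rangle+B(\rho,\rho)=B(\rho,\epsilon_c)-\langle\partial_t U_d,e_c\rangle ,
\end{equation*}
and to estimate its right-hand side so that a $L^1(0,T;\leb2(\W))$ norm of $\partial_t U_d$ enters instead of a $\sobh{-1}$ one. I keep, as before, the Cauchy--Schwarz bound $B(\rho,\epsilon_c)\le\ndg{\rho}\,\ndg{\epsilon_c}$ coming from \eqref{cons}, \eqref{norm_cons} and $\rho,\epsilon_c\in\sobhz1(\W)$, but now I bound the last term by the plain $\leb2$ Cauchy--Schwarz inequality, $|\langle\partial_t U_d,e_c\rangle|\le\ltwo{\partial_t U_d}{}\,\ltwo{e_c}{}$. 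Using $\langle\partial_t e_c,e_c\rangle=\tfrac12\ud_t\ltwo{e_c}{}^2$ and the Young inequality $\ndg{\rho}\,\ndg{\epsilon_c}\le\tfrac13\ndg{\rho}^2+\tfrac34\ndg{\epsilon_c}^2$ (this particular split is what will produce the constant $\sqrt{3/2}$), I arrive at the single differential inequality
\begin{equation*}
  \tfrac12\ud_t\ltwo{e_c}{}^2+\tfrac23\ndg{\rho}^2\le\tfrac34\ndg{\epsilon_c}^2+\ltwo{\partial_t U_d}{}\,\ltwo{e_c}{},
\end{equation*}
holding for a.e.\ $t\in[0,T]$, from which both quantities appearing on the left of \eqref{apost_semi_l1} will be extracted.

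The first step is to read off an $\leb\infty(0,T;\leb2(\W))$ bound on $e_c$. Discarding the nonnegative term $\tfrac23\ndg{\rho}^2$, integrating over $[0,t^*]$ where $t^*$ maximises $t\mapsto\ltwo{e_c(t)}{}$ on $[0,T]$ (continuity of $t\mapsto\ltwo{e_c(t)}{}$ guarantees a maximiser), and using $\int_0^{t^*}\ltwo{\partial_t U_d}{}\,\ltwo{e_c}{}\le M\,D$ with $M:=\ltwo{e_c}{\leb\infty(0,T;\leb2(\W))}$ and $D:=\ltwo{\partial_t U_d}{L^1(0,T;\leb2(\W))}$, I obtain
\begin{equation*}
  \tfrac12 M^2\le\tfrac12\ltwo{e_c(0)}{}^2+\tfrac34\ltwo{\epsilon_c}{\leb2(0,T;\mathscript{S})}^2+M D .
\end{equation*}
This is a quadratic inequality in $M$; solving it and using $\sqrt{x^2+y^2+z^2}\le x+y+z$ for $x,y,z\ge0$ yields $M\le 2D+\ltwo{e_c(0)}{}+\sqrt{3/2}\,\ltwo{\epsilon_c}{\leb2(0,T;\mathscript{S})}$.

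The second step integrates the full differential inequality over $[0,T]$, discards $\tfrac12\ltwo{e_c(T)}{}^2\ge0$, and again bounds $\int_0^T\ltwo{\partial_t U_d}{}\,\ltwo{e_c}{}\le M D$, inserting the bound on $M$ just obtained; this gives
\begin{equation*}
  \ltwo{\rho}{\leb2(0,T;\mathscript{S})}^2\le\tfrac34\ltwo{e_c(0)}{}^2+\tfrac98\ltwo{\epsilon_c}{\leb2(0,T;\mathscript{S})}^2+3D^2+\tfrac32 D\,\ltwo{e_c(0)}{}+\tfrac32\sqrt{3/2}\,D\,\ltwo{\epsilon_c}{\leb2(0,T;\mathscript{S})},
\end{equation*}
and a term-by-term comparison of coefficients shows the right-hand side is dominated by $\bigl(\sqrt2\,\ltwo{e_c(0)}{}+\sqrt{3/2}\,\ltwo{\epsilon_c}{\leb2(0,T;\mathscript{S})}+2D\bigr)^2$, hence $\ltwo{\rho}{\leb2(0,T;\mathscript{S})}\le\sqrt2\,\ltwo{e_c(0)}{}+\sqrt{3/2}\,\ltwo{\epsilon_c}{\leb2(0,T;\mathscript{S})}+2D$. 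It remains to undo the decompositions: \eqref{splitting_semi} gives $u-U=\epsilon-\rho$, so $\ltwo{u-U}{\leb2(0,T;\mathscript{S})}\le\ltwo{w-U}{\leb2(0,T;\mathscript{S})}+\ltwo{\rho}{\leb2(0,T;\mathscript{S})}$; then \eqref{eps_c} yields $\epsilon_c=(w-U)+U_d$, whence $\ltwo{\epsilon_c}{\leb2(0,T;\mathscript{S})}\le\ltwo{w-U}{\leb2(0,T;\mathscript{S})}+\ltwo{U_d}{\leb2(0,T;\mathscript{S})}$, and $e_c(0)=U_c(0)-u_0$. Substituting assembles exactly \eqref{apost_semi_l1}.

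The step requiring the most care is the handling of $\int_0^T\ltwo{\partial_t U_d}{}\,\ltwo{e_c}{}$: to match the $L^1(0,T;\leb2(\W))$ norm in the statement one is forced to pull $\ltwo{e_c}{}$ out in its $\leb\infty(0,T;\leb2(\W))$ norm rather than pair it in $\leb2(0,T;\leb2(\W))$, which makes the closure of the estimate nonlinear and calls for the maximal-time/quadratic-inequality device above, after which the various cross terms must be repackaged into a single perfect square while respecting the advertised constants. Everything else — the energy identity and the mapping properties of $B$ on $\sobhz1(\W)$ — is already available from Lemma~\ref{energy_arg} and the proof of Theorem~\ref{apost_semi_thm}.
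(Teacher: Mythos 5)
Your proposal is correct and follows essentially the same route as the paper: test the error relation with $e_c$, handle the $\langle\partial_t U_d,e_c\rangle$ term through the maximum of $\ltwo{e_c(t)}{}$ over $[0,T]$ paired with the $L^1(0,T;\leb2(\W))$ norm, integrate once up to the maximising time and once up to $T$, and then combine with the triangle inequalities from (\ref{splitting_semi}) and (\ref{eps_c}). The only differences are bookkeeping ones — your $\tfrac13/\tfrac34$ Young split and quadratic-inequality/perfect-square packaging versus the paper's Young absorptions and $\sqrt{a+b+c}\le\sqrt a+\sqrt b+\sqrt c$ — and both yield exactly the constants in (\ref{apost_semi_l1}).
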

\begin{proof}
  Let $T_0\in [0,T]$ be such that
  \begin{equation}
    \ltwo{e_c(T_0)}{}=\max_{0\le t\le T}\ltwo{e_c(t)}{}=:E_c.
  \end{equation}
  Then, (\ref{basic_error}) implies
  \begin{equation}
    \label{semi_two}
    \ha\ud_t\ltwo{e_c}{}^2
    +\ndg{\rho}^2\le\ndg{\rho}\,\ndg{\epsilon_c}
    +E_c\ltwo{\partial_t U_d}{},
  \end{equation}
  which, after integration on $[0,T_0]$, yields
  \begin{equation}
    \begin{aligned}
      \ha E_c^2
      +\ltwo{\rho}{\leb2(0,T_0;\mathscript{S})}^2
      \le &\ha \ltwo{e_c(0)}{}^2
      +\ltwo{\rho}{\leb2(0,T_0;\mathscript{S})}
      \ltwo{\epsilon_c}{\leb2(0,T_0;\mathscript{S})}\\
      &+E_c\ltwo{\partial_t U_d}{L^1(0,T_0;\leb2(\W))},
    \end{aligned}
  \end{equation}
  or
  \begin{equation}
  \label{semi_three}
  \frac{1}{4} E_c^2 \le \ha \ltwo{e_c(0)}{}^2
  + \frac{1}{4} \ltwo{\epsilon_c}{\leb2(0,T;\mathscript{S})}^2+\ltwo{\partial_t U_d}{L^1(0,T;\leb2(\W))}^2.
  \end{equation}
  Going back to (\ref{semi_two}), upon integration with respect to $t$
  between $[0,T]$, we obtain
  \begin{equation}
  \ha\ltwo{\rho}{\leb2(0,T;\mathscript{S})}^2\le \ha \ltwo{e_c(0)}{}^2
  + \ha\ltwo{\epsilon_c}{\leb2(0,T;\mathscript{S})}^2+\frac{1}{4}E_c^2+\ltwo{\partial_t U_d}{L^1(0,T;\leb2(\W))}^2,
  \end{equation}
  which, in conjunction with (\ref{semi_three}) gives
  \begin{equation}
  \ltwo{\rho}{\leb2(0,T;\mathscript{S})}^2\le  2\ltwo{e_c(0)}{}^2
  + \frac{3}{2}\ltwo{\epsilon_c}{\leb2(0,T;\mathscript{S})}^2+4\ltwo{\partial_t U_d}{L^1(0,T;\leb2(\W))}^2;
  \end{equation}
  the final bound now follows using the triangle inequality on (\ref{splitting_semi}) and (\ref{eps_c}).
\end{proof}
\begin{Remark}[long- versus short-time bounds]
We note that the crucial difference between bounds (\ref{apost_semi})
and (\ref{apost_semi_l1}) is that in the latter the $L^1$-accumulation
term $\ltwo{\partial_t U_d}{L^1(0,T;\mathscript{S})}$ is present; this
implies
\begin{equation}
  \ltwo{\partial_t U_d}{L^1(0,T;\leb2(\W))}\le
  \sqrt{T}  \ltwo{\partial_t U_d}{\leb2(0,T;\leb2(\W))},
\end{equation}
which may be preferable if $T<1$, but can be inefficient for long-time
integration. On the other hand, the corresponding term in
(\ref{apost_semi}) is
$\ltwo{\partial_t{U_d}/\sqrt{\amin}}{{\leb2(0,T;\sobh{-1}(\W)})}$,
which can be a bit less inefficient when the diffusion tensor $a$
varies substantially on $\W$.  \changes{We note, however, that it is
  possible to avoid dividing by the factor $\amin$, by equipping
  $\sobh{-1}(\W)$ with the dual norm of the energy norm
  (\ref{norm_cons}) in $\sobhz1(\W)$.  In practice, however, this
  improvement is relevant only if the dual norm is calculated
  explicitly~\cite{LakkisPryer:10}. Alternatively, one can apply a
  Poincaré--Friedrichs inequality to bound the dual norm by the
  $\leb2$-norm, which results into the reappearance of the factor
  $\amin$.}
\end{Remark}
\begin{Remark}[elliptic \aposteriori error estimates]
The bounds (\ref{apost_semi}) and (\ref{apost_semi_l1})
are not (yet) explicitly \aposteriori bounds:
$\ltwo{w-U}{\leb2(0,T;\mathscript{S})}$ still needs to be bounded by a
computable quantity.  To this end, given $g\in\leb2$, consider the
elliptic problem:
\begin{align}
  \label{elliptic_problem}
  &&\begin{split}
    &\text{find $z\in \sobhz1(\W)$ such that}
    \\
    &-\nabla\cdot (\mat a\nabla z) =g\
    \text{in }\W, \quad z=0\text{ on }\partial\W,
  \end{split}
  &
  \intertext{whose solution can be approximated by the following DG method:}
  &&\begin{split}\label{dg_elliptic}
    &\text{find $Z\in S$ such that}
    \\
    &B(Z,V)=\langle g,V\rangle\Forall V\in S.
  \end{split}
  &
\end{align}
If assume that an \emph{a posteriori estimator functional} $\cE$
exists, i.e.,
\begin{equation}
  \label{apost_ell}
  \ndg{z-Z}\le \mathscript{E}(Z,\mat a,g,\mathscript{T}),
\end{equation}
then we can \emph{computably} bound $\ltwo{w-U}{\leb2(0,T;\mathscript{S})}$ in
(\ref{apost_semi}) and (\ref{apost_semi_l1}) through
\begin{equation}
  \ltwo{w-U}{\leb2(0,T;\mathscript{S})}\le
  \Big(\int_0^T
  \mathscript{E}(U,\mat a, AU-\Pg f +f,\mathscript{T})^2\Big)^{1/2}.
\end{equation}
\Aposteriori bounds for various DG methods have been studied, under
different assumptions on data and admissible finite element spaces, by
many authors \cite{becker-hansbo-larson:03,karakashian-pascal:03,
houston-schoetzau-wihler:07,ainsworth:07,
houston-suli-wihler:08,ern-stephansen:08,carstensen-gudi-jensen:08}. Thus Theorems
\ref{apost_semi_thm} and \ref{apost_semi_thm_l1} can be applied to any
DG---and more generally to any non-conforming---method satisfying
(\ref{cons}) and (\ref{norm_cons}), and for which (\ref{apost_ell}) is
available.  The object of \secref{sec:various-DG} is to address this for \ipdg.
\end{Remark}
\section{\Aposteriori error bounds for the interior penalty DG method}
\label{sec:various-DG}
Here we extend the energy-norm a posteriori bounds for the family
\ipdg methods cf.\cite{becker-hansbo-larson:03,
karakashian-pascal:03, houston-schoetzau-wihler:07} for the Poisson
problem, to the case of the general diffusion problem
(\ref{elliptic_problem}). 
\changes{A similar analysis has recently appeared also in \cite{ern-stephansen:08}, while
a related DG method based on weighted averages for anisotropic and high-contrast diffusion problems can be found
in \cite{ern-stephansen-zunino:09}.}
We stress that our results can be
generalized as to allow for inhomogeneous or mixed boundary
conditions following \cite[resp.]{houston-schoetzau-wihler:07,karakashian-pascal:03}.


\begin{Definition}[\ipdg method]
  For $z,v\in \mathscript{S}$, the bilinear form
  $B:\mathscript{S}\times\mathscript{S}\to\mathbb{R}$ for the \ipdg
  method for the problem (\ref{elliptic_problem}) can be written as
  \begin{equation}
    \label{dg_elliptic_bilinear}
    B(z,v)
    :=
    \int_{\W}(\mat a\dgrad z)\cdot\dgrad v
    +\int_{\Ga}\big(\theta\mean{\mat a\Pg\nabla v}\cdot\jump{z}
    -\mean{\mat a\Pg\nabla z}\cdot\jump{v}+\sigma\jump{z}\cdot\jump{v}\big),
  \end{equation}
  for $\theta\in\{-1,0,1\}$, where $\Pg:[\leb2(\W)]^d\to S^d$
  denotes also the orthogonal $\leb2$-projection operator onto $S^d$,
  and the \emph{penalty function} $\sigma:\Ga\to \mathbb{R}$ is
  defined by
  \begin{equation}
    \label{eqn:def:penalty-sigma}
    \sigma:=\frac{C_{\mat a,\mu(\cT)}\mean{\amaxk}}h,
  \end{equation}
  where the constant $C_{\mat a,\mu(\cT)}>0$ depends on the
  shape-regularity of the mesh $\cT$ and on the smallest possible
  $C_{\mat a}>1$ such that
  \begin{equation}\label{lbv}
    C_{\mat a}^{-1}\le \frac{\amaxk|_{\kappa^+}}{\amaxk|_{\kappa^-}}\le C_{\mat a},
  \end{equation}
  for every pair of elements $\kappa^+$ and $\kappa^-$ sharing a common side.

  We also define the corresponding energy norm $\ndg{\cdot}$ for the
  \ipdg method by
  \begin{equation}\label{DG-norm}
    \ndg{v}
    :=\Big(\ltwo{\sqrt{\mat a}\dgrad v}{}^2
    +\ltwo{\sqrt{\sigma}\jump{v}}{\Ga}^2\Big)^{1/2},
  \end{equation}
  for $v\in \mathscript{S}$. Note that for $v\in S$, we have $\Pg
  \nabla v=\nabla v$ and, therefore, $B$ can be reduced to the more
  familiar form
  \begin{equation}
    \label{dg_elliptic_bilinear_fd}
    B(z,v)
    :=
    \int_{\W}(\mat a\dgrad z)\cdot\dgrad v
    +\int_{\Ga}\big(\theta\mean{\mat a\nabla v}\cdot\jump{z}
    -\mean{\mat a\nabla z}\cdot\jump{v}+\sigma\jump{z}\cdot\jump{v}\big),
  \end{equation}
  for $z,v\in S$ \cite[cf.]{arnold:82,arnold-brezzi-cockburn:02:unified,
    riviere-wheeler-girault:99,houston-schwab-suli:02}.
  Observe that both (\ref{cons}) and (\ref{norm_cons}) hold
  for the particular $B$ and $\ndg{\cdot}$ defined above.
\end{Definition}
\begin{Remark}[conforming part of a nonconforming finite element function]
  \label{rem:conforming-nonconforming}
  The space-discontinuous finite element space $S$ contains the
  conforming (continuous) finite element space $S_c=S\cap
  \sobhz1(\W)$ as a subspace. The approximation of functions in $S$
  by functions in $S_c$ will play an important role in our derivation
  of the \aposteriori bounds. This can be quantified in the following
  result, which is an extension of
  \cite[Thm. 2.1]{karakashian-pascal:07}.  For other similar results
  we refer to \cite{sun-wheeler:05, becker-hansbo-larson:03,
  houston-schoetzau-wihler:07, burman-ern:07}.
\end{Remark}
\begin{Lemma}[bounding the nonconforming part via jumps]
\label{h_oswald}
Suppose $\mathscript{T}$ is a regular mesh and $\mat a$ is
elementwise (weakly) differentiable. Then, for any function $Z\in S$
there exists a function $Z\conpart\in S_c$ such that
\begin{equation}\label{magic1}
\ltwo{Z-Z\conpart}{}\le C_1\ltwo{\sqrt{h}\jump{Z}}{\Ga},
\end{equation}
and
\begin{equation}\label{magic2}
\ltwo{\sqrt{\mat a}\dgrad(Z-Z\conpart)}{}\le C_2\ltwo{\sqrt{\sigma}\jump{Z}}{\Ga},
\end{equation}
where $C_1,C_2>0$ constants depending on the shape-regularity, on the maximum polynomial degree of the local basis and on $C_{\mat a}$.
\end{Lemma}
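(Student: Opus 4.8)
The plan is to take $Z\conpart$ to be an Oswald-type nodal averaging of $Z$ onto the conforming subspace $S_c$, in the spirit of \cite[Thm.~2.1]{karakashian-pascal:07}, and then to estimate $Z-Z\conpart$ element by element by scaling (inverse) inequalities, turning the nodal discrepancies of $Z$ into $\leb2$-norms of $\jump{Z}$ on the skeleton $\Ga$. First I would fix a Lagrange nodal set for $\mathscript{P}_p(\refel)$ on the reference element whose restriction to each $(d-1)$-dimensional face is unisolvent for polynomials of that degree on the face, and pull it back by the maps $F_\kappa$ to obtain, for each $\el$, a finite set $\mathscript{N}_\kappa\subset\bar{\kappa}$ of nodes; since $\cT$ has no hanging nodes, a node on a face shared by two elements lies in the nodal set of both. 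Then I would define $Z\conpart\in S$ by prescribing its nodal values: $Z\conpart(\nu):=Z(\nu)$ at nodes interior to an element; $Z\conpart(\nu)$ equal to the arithmetic mean of the finitely many one-sided values of $Z$ at $\nu$, for $\nu\in\Ga_\dint$; and $Z\conpart(\nu):=0$ for $\nu\in\Ga_\partial$. Unisolvence on faces forces the two one-sided traces of $Z\conpart$ to agree on every interior face and to vanish on every boundary face, so $Z\conpart$ is continuous on $\bar{\W}$, vanishes on $\partial\W$, and is piecewise polynomial; hence $Z\conpart\in S_c=S\cap\sobhz1(\W)$.

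To get (\ref{magic1}), I would note that on each $\el$ the polynomial $(Z-Z\conpart)|_\kappa$ vanishes at every node of $\mathscript{N}_\kappa$ interior to $\kappa$, so norm equivalence on the finite-dimensional space $\mathscript{P}_p(\refel)$ together with the scaling induced by $F_\kappa$ (whose distortion is controlled by $\mu(\cT)$) gives $\ltwo{Z-Z\conpart}{\kappa}^2\le C\,h_\kappa^d\sum_\nu\abs{(Z-Z\conpart)(\nu)}^2$, the sum over the boundedly many nodes of $\mathscript{N}_\kappa$ on $\Ga$. For $\nu\in\Ga_\dint$, $(Z-Z\conpart)(\nu)$ is an average of one-sided differences $Z|_\kappa(\nu)-Z|_{\kappa'}(\nu)$; joining $\kappa$ to $\kappa'$ by a chain of face-adjacent elements inside the (uniformly bounded) element patch around $\nu$ and telescoping bounds each difference by a sum of moduli $\abs{\jump{Z}_e(\nu)}$ over faces $e\ni\nu$, while for $\nu\in\Ga_\partial$ one has $\abs{(Z-Z\conpart)(\nu)}=\abs{Z|_\kappa(\nu)}=\abs{\jump{Z}_e(\nu)}$ with $e\subset\partial\kappa\cap\partial\W$. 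Since $\jump{Z}$ restricted to a face $e$ lies in a fixed finite-dimensional space, scaling once more yields $\abs{\jump{Z}_e(\nu)}^2\le C\,h_e^{-(d-1)}\ltwo{\jump{Z}}{e}^2$. Collecting these, using $h_\kappa\simeq h_e$ for $e\subset\partial\kappa$ (shape-regularity) and the finite overlap of the node patches, and summing over the elements gives (\ref{magic1}).

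For (\ref{magic2}) I would combine the inverse inequality $\ltwo{\dgrad(Z-Z\conpart)}{\kappa}\le C\,h_\kappa^{-1}\ltwo{Z-Z\conpart}{\kappa}$ with $\ltwo{\sqrt{\mat a}\dgrad(Z-Z\conpart)}{\kappa}^2\le\amaxk|_\kappa\,\ltwo{\dgrad(Z-Z\conpart)}{\kappa}^2$ and insert the local $\leb2$ estimate above, which produces a sum of terms $\amaxk|_\kappa\,h_\kappa^{-2}h_e\,\ltwo{\jump{Z}}{e}^2$ over faces $e$ meeting $\kappa$. Since $\amaxk|_\kappa\le2\mean{\amaxk}|_e$ for $e\subset\partial\kappa$ and $h_\kappa\simeq h_e$, and since by the definition (\ref{eqn:def:penalty-sigma}) of the penalty $\mean{\amaxk}|_e/h_e=\sigma|_e/C_{\mat a,\mu(\cT)}$ with $C_{\mat a,\mu(\cT)}$ depending only on $\mu(\cT)$ and $C_{\mat a}$ (cf.\ (\ref{lbv})), each such term is bounded by $C\,\sigma|_e\,\ltwo{\jump{Z}}{e}^2$; summing over the elements gives (\ref{magic2}), with $C_2$ depending on $\mu(\cT)$, $p$, and $C_{\mat a}$.

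The calculus here is routine; I expect the main obstacle to be the geometric bookkeeping: checking that any two elements sharing a node are joined by a uniformly short chain of face-adjacent elements in the node's patch, that the patch cardinalities and the $F_\kappa$-distortions are controlled purely by $\mu(\cT)$, and that the scattered powers of $h$ recombine exactly into the weights $\sqrt h$ and $\sqrt\sigma$. The elementwise weak differentiability of $\mat a$ is not really needed for the argument above beyond ensuring that $\amink,\amaxk$ are well defined.
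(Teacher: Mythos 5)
Your proposal is correct and is essentially the argument the paper intends: the paper omits the proof, referring to the Oswald-type averaging construction of \cite[Thm.~2.2]{karakashian-pascal:03}, which is precisely your nodal-averaging operator on a regular mesh combined with scaling/norm-equivalence and inverse estimates that convert nodal discrepancies into face-jump $\leb2$-norms. Your treatment of the diffusion weights --- bounding $\amaxk|_\kappa$ by $\mean{\amaxk}$ via (\ref{lbv}) within the bounded node patches and absorbing $\mean{\amaxk}/h$ into $\sigma$ through (\ref{eqn:def:penalty-sigma}) --- is the only extension needed beyond the cited Poisson-case result, and it is handled correctly, with the resulting dependence of $C_1,C_2$ on $\mu(\cT)$, $p$ and $C_{\mat a}$ matching the statement.
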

The proof, omitted here, follows closely that of
\cite[Thm. 2.2]{karakashian-pascal:03}.  Lemma \ref{h_oswald} can be
proved for irregular (i.e., with hanging-nodes) meshes
\cite[Thm. 2.3]{karakashian-pascal:03}, in which case $C_1$ and
$C_2$ depend on the maximum refinement and coarsening levels
$L_{\max}$.

\begin{Lemma}[\aposteriori bounds for \ipdg method for elliptic problem]
\label{ell_ipdg_apost}
Let $\mathscript{T}$ be a regular and $\mat a$ is elementwise (weakly)
differentiable.  Let $z$ and $Z$ be given by (\ref{elliptic_problem}) and
(\ref{dg_elliptic}).  Then
\begin{equation}
  \label{apost_ell_ipdg}
  \ndg{z-Z}\le  \estfunc(Z,\mat a,g,\mathscript{T}),
\end{equation}
where
\begin{equation}\label{est_ell}
\begin{aligned}
  \estfunc(Z,\mat a,g,\mathscript{T})
  := C\maxcondloc\bigg(&
  \ltwo{h/\sqrt{\amink}(g+\dgrad\cdot(\mat a\dgrad Z))}{}\\
  & + \ltwo{\sqrt{h/\amink}\jump{\mat a\dgrad Z}}{\Ga_{\dint}}
  +\ltwo{\sqrt{\sigma}\jump{Z}}{\Ga}\bigg),
\end{aligned}
\end{equation}
and $\maxcondloc:=\max_{\W}\sqrt{\amaxk/\amink}$, where $C>0$
depends only on $\mu(\cT)$ and $C_{\mat a}$.
\end{Lemma}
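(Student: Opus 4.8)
The plan is to adapt the residual-based argument of \cite{karakashian-pascal:03} for the Poisson problem, combining it with the conforming/nonconforming splitting of Lemma~\ref{h_oswald} to absorb the penalty contribution and carefully tracking the weights $\amink,\amaxk$ induced by the (non-diagonal, variable) diffusion $\mat a$. First I would invoke Lemma~\ref{h_oswald} to write $Z=Z\conpart+Z\dispart$ with $Z\conpart\in S_c\subseteq\sobhz1(\W)$; since $\jump{Z\dispart}=\jump Z$, the definition (\ref{DG-norm}) and the bound (\ref{magic2}) give immediately
\[
  \ndg{Z\dispart}\le\sqrt{C_2^2+1}\,\ltwo{\sqrt\sigma\jump Z}{\Ga},
\]
which is already of the required form. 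By the triangle inequality it then suffices to bound $\ndg{z-Z\conpart}$, and since $z,Z\conpart\in\sobhz1(\W)$ this reduces to $\ltwo{\sqrt{\mat a}\nabla(z-Z\conpart)}{}$.

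Writing $\xi:=z-Z\conpart\in\sobhz1(\W)$ and testing the weak form of (\ref{elliptic_problem}) with $\xi$ yields $\ndg{\xi}^2=\langle g,\xi\rangle-\int_\W(\mat a\nabla Z\conpart)\cdot\nabla\xi$. I would then bring in a Cl\'ement-type quasi-interpolant $v_h\in S_c$ of $\xi$, use the discrete equation (\ref{dg_elliptic}) in the form $B(Z,v_h)=\langle g,v_h\rangle$ (legitimate because $S_c\subseteq S$), together with $\dgrad Z\conpart=\dgrad Z-\dgrad Z\dispart$, $\jump{v_h}=0$ and $\Pg\nabla v_h=\nabla v_h$, and finally integrate by parts elementwise using the continuity and vanishing boundary trace of $\xi-v_h$. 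After these manipulations the identity becomes
\[
  \ndg{\xi}^2=\int_\W\big(g+\dgrad\cdot(\mat a\dgrad Z)\big)(\xi-v_h)
  -\int_{\Ga_{\dint}}\jump{\mat a\dgrad Z}(\xi-v_h)
  +\theta\int_\Ga\mean{\mat a\nabla v_h}\cdot\jump Z
  +\int_\W(\mat a\dgrad Z\dispart)\cdot\nabla\xi .
\]

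The four terms are then estimated by Cauchy--Schwarz together with the local approximation and $\sobh1$-stability properties of the quasi-interpolant and element trace/inverse inequalities, all in $\mat a$-weighted form: the bulk residual and the flux-jump term are controlled by $\ltwo{h\amink^{-1/2}(g+\dgrad\cdot(\mat a\dgrad Z))}{}$ and $\ltwo{\sqrt{h/\amink}\jump{\mat a\dgrad Z}}{\Ga_{\dint}}$ times $\ltwo{\amink^{1/2}\nabla\xi}{}\le\ndg{\xi}$; the last term is bounded by $C_2\ltwo{\sqrt\sigma\jump Z}{\Ga}\,\ndg{\xi}$ via (\ref{magic2}); and the consistency term is split as $\ltwo{\sqrt{h/\amaxk}\,\mean{\mat a\nabla v_h}}{\Ga}\ltwo{\sqrt{\amaxk/h}\jump Z}{\Ga}$, where the second factor is controlled by $\ltwo{\sqrt\sigma\jump Z}{\Ga}$ through (\ref{eqn:def:penalty-sigma}) and the first, after a trace/inverse estimate, the identity $\Pg\nabla v_h=\nabla v_h$ and the energy stability of the quasi-interpolant, is controlled by $\maxcondloc\,\ndg{\xi}$. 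Summing, dividing by $\ndg{\xi}$, and using $\maxcondloc\ge1$ to factor it out of every summand gives $\ndg{\xi}\le C\maxcondloc(\cdots)$; combined with the triangle inequality and the bound on $\ndg{Z\dispart}$ above, this yields (\ref{apost_ell_ipdg})--(\ref{est_ell}).

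I expect the main obstacle to be the bookkeeping of the element-dependent weights $\amink,\amaxk$ across the quasi-interpolation patches — this is what forces the shape-regularity and the bounded local variation (\ref{lbv}) of $\amaxk$ into the constants — and, relatedly, the estimate of $\ltwo{\sqrt{h/\amaxk}\,\mean{\mat a\nabla v_h}}{\Ga}$, where passing from $\mat a\nabla v_h$ to $\sqrt{\mat a}\nabla v_h$ costs precisely the local square-root condition number and produces the $\maxcondloc$ prefactor that is absent in the diagonal/Poisson case. A secondary technical point is that $\dgrad\cdot(\mat a\dgrad Z)$ must be read elementwise, which is where the minimal regularity assumed on $\mat a$ enters \cite[cf.]{ern-stephansen:08}.
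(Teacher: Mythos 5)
Your proposal is correct in outline and shares the paper's overall architecture: both split $Z=Z_c+Z_d$ via Lemma~\ref{h_oswald}, reduce matters to an energy bound for the conforming error $z-Z_c$ by a residual argument built on the discrete equation (\ref{dg_elliptic}), absorb the nonconforming contribution through (\ref{magic2}) together with $\jump{Z_d}=\jump{Z}$, and track the weights $\amink,\amaxk$ to produce the single $\maxcondloc$ prefactor in (\ref{est_ell}). The genuine difference lies in the discrete function used at the Galerkin-orthogonality step. You take a conforming Cl\'ement quasi-interpolant $v_h\in S_c$ of $\xi=z-Z_c$; this kills all jump terms of the test function, so only the bulk residual, the flux jump, the $\theta$-consistency term and the $\mat a\dgrad Z_d$ term survive, but it forces patch-based approximation and weighted $\sobh1$-stability estimates, and hence the comparison of $\amink,\amaxk$ across patches that you correctly flag (note that (\ref{lbv}) controls only the local variation of $\amaxk$, so comparing $\amink$ between neighbouring elements must be routed through $C_{\mat a}$ and one power of $\maxcondloc$ — which, as you anticipate, still lands on the stated bound). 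The paper instead tests with the elementwise-constant $\leb2$-projection $\Pg_0 e_c\in S$, which is purely local (an elementwise Poincar\'e inequality $\ltwo{h^{-1}\eta}{\kappa}\le C\ltwo{\nabla e_c}{\kappa}$ suffices, no patches and no extra weight comparison), at the price of a discontinuous remainder $\eta=e_c-\Pg_0e_c$ whose skeleton terms $\mean{\eta}$ and $\jump{\eta}$ must then be handled with trace/inverse estimates and the penalty, producing the two additional edge terms (\ref{ell_third})--(\ref{ell_fifth}). Both routes deliver (\ref{apost_ell_ipdg})--(\ref{est_ell}); yours is closer to the conforming-interpolant residual arguments of \cite{houston-schoetzau-wihler:07,ern-stephansen:08}, the paper's closer to \cite{karakashian-pascal:03}, and the paper's choice buys simpler weight bookkeeping in exchange for a couple of extra skeleton terms.
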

\begin{proof}
Our proof is inspired by \cite{karakashian-pascal:03,houston-schoetzau-wihler:07}.
Denoting by $Z\conpart\in S_c$ the conforming part of $W$ as
in Lemma \ref{h_oswald}, we have
\begin{equation}
  e:=z-Z=e_c+e_d,\quad\text{where}\quad e_c:=z-Z\conpart,
  \quad\text{and}\quad e_d:=Z\conpart-Z,
\end{equation}
yielding $e_c\in \sobhz1(\W)$. Thus, we have $B(z,e_c)=\langle
g,e_c\rangle$. Let $\Pg_0:\leb2(\W)\to\mathbb{R}$ denote the
orthogonal $\leb2$-projection onto the elementwise constant functions;
then $\Pg_0e_c \in S$ and we define $\eta:=e_c-\Pg_0e_c$.

We also have
\begin{equation}
  B(e,e_c)
  =B(z,e_c)-B(Z,e_c)
  =\langle g,e_c\rangle-B(Z,\eta)-B(Z,\Pg_0e_c)
  =\langle g,\eta\rangle-B(Z,\eta),
\end{equation}
which implies
\begin{equation}
  \label{ell_error_relation}
  \ltwo{\sqrt{a}\nabla e_c}{}^2
  =B(e_c,e_c)=\langle g,\eta\rangle-B(Z,\eta)-B(e_d,e_c).
\end{equation}
For the last term on the right-hand side of
(\ref{ell_error_relation}), we have
\begin{equation}
  \label{ell_before_first}
  |B(e_d,e_c)|\le \ltwo{\sqrt{\mat a}\dgrad e_d}{}\ltwo{\sqrt{\mat a}\nabla e_c}{}
  + \ha \sum_{s\subset\Ga}\sum_{\kappa=\kappa^+,\kappa^-} \amaxk|_{\kappa}\ltwo{\sqrt{h}(\Pg\nabla e_c)|_{\kappa}}{e}\ltwo{\jump{e_d}/\sqrt h}{s},
\end{equation}
where $\kappa^+$ and $\kappa^-$ are the (generic) elements having $e$
as common side. Using the inverse estimate of the form
$\ltwo{\sqrt{h}V}{e}\le C\ltwo{V}{\kappa}$ for $V=\Pg\nabla e_c$, and
the stability of the $\leb2$-projection, we arrive to
\begin{equation}
  |B(e_d,e_c)|\le \ltwo{\sqrt{\mat a}\dgrad e_d}{}\ltwo{\sqrt{\mat a}\nabla e_c}{}
  + C\maxcondloc\ltwo{\sqrt{\mat a}\nabla e_c}{}\ltwo{\sqrt{\sigma}\jump{e_d}}{\Ga}.
\end{equation}
Finally, noting that $\jump{e_d}=\jump{Z}$, and making use of (\ref{magic2}) we conclude that
\begin{equation}
  \label{ell_first}
  |B(e_d,e_c)|
  \le C\maxcondloc
  \ltwo{\sqrt{\mat a}\nabla e_c}{}\,\ltwo{\sqrt{\sigma}\jump{W}}{\Ga}.
\end{equation}

To bound the first two terms on the right-hand side of (\ref{ell_error_relation}), we begin by an elementwise integration by parts yielding
\begin{equation}\label{ell_sec}
\begin{aligned}
\langle g,\eta\rangle-B(Z,\eta)=& \int_{\W}\big(g+\dgrad\cdot(\mat a\dgrad Z)\big)\eta
-\int_{\Ga_{\dint}}\mean{\eta}\jump{\mat a\nabla Z}\ud s\\
&+\int_{\Ga}\theta\mean{\mat a\Pg\nabla \eta}\cdot\jump{Z}\ud s
-\int_{\Ga} \sigma\jump{Z}\cdot\jump{\eta}\ud s.
\end{aligned}
\end{equation}
The first term on the right-hand side of (\ref{ell_sec}) can be bounded as follows:
\begin{equation}\Big|
\int_{\W}\big(g+\nabla\cdot(\mat a\nabla Z)\big)\,\eta\,
\Big|
\le \ltwo{h/\sqrt{\amink}(g+\dgrad\cdot(\mat a\dgrad Z))}{}
\ltwo{\sqrt{\amink}h^{-1}\eta}{};
\end{equation}
upon observing that $\ltwo{h^{-1}\eta}{\kappa}\le C\ltwo{\nabla
e_c}{\kappa}$, this becomes
\begin{equation}
  \Big| \int_{\W}\big(g+\nabla\cdot(\mat a\nabla Z)\big)\,\eta \Big|
  \le C\maxcondloc
  \ltwo{h/\sqrt{\amink}(g+\dgrad\cdot(\mat a\dgrad Z))}{}\ltwo{\sqrt{\mat a}\nabla e_c}{}.
\end{equation}
\changes{
For the second term on the right-hand side of (\ref{ell_sec}), we use a trace estimate,
the bound $\ltwo{h^{-1}\eta}{\kappa}\le C\ltwo{\nabla
e_c}{\kappa}$ and we observe that $\nabla\eta=\nabla e_c$, to deduce}
\begin{equation}
\label{ell_third}
\Big|\int_{\Ga_{\dint}}\mean{\eta}\jump{\mat a\nabla Z}\ud s\Big|
\le
C\maxcondloc \ltwo{\sqrt{\mat a}\nabla e_c}{}\,\ltwo{\sqrt{h/\amink}\jump{\mat a\dgrad Z}}{\Ga_{\dint}}.
\end{equation}

\changes{For the third term on the right-hand side of (\ref{ell_sec}), we use
$\nabla\eta=\nabla e_c$ and, working alike to (\ref{ell_before_first}),}
we obtain
\begin{equation}
\label{ell_fourth}
\Big|\int_{\Ga}\theta\mean{\mat a\Pg\nabla \eta}\cdot\jump{Z}\Big|
\le
C\maxcondloc |\theta|\ltwo{\sqrt{\mat a}\nabla e_c}{}\,\ltwo{\sqrt{\sigma}\jump{Z}}{\Ga},
\end{equation}
and finally, for the last term on the right-hand side of (\ref{ell_sec}), we get
\begin{equation}
\label{ell_fifth}
\Big|\int_{\Ga}\sigma\jump{\eta}\cdot\jump{Z}\Big|
\le C\maxcondloc \ltwo{\sqrt{\mat a}\nabla e_c}{}\,\ltwo{\sqrt{\sigma}\jump{Z}}{\Ga}.
\end{equation}

The result follows combining the above relations.
\end{proof}
\begin{Theorem}[\aposteriori bounds for \ipdg method for parabolic problem]
\label{final_bound_semi}
Let $u, U$ be the exact weak solution of (\ref{pde}), and the \ipdg
solution of the problem (\ref{dg_semi}), respectively, and let $\mat
a$ be elementwise (weakly) differentiable.  Then, the following error
bound holds:
\begin{equation}\label{apost_semi_cor}
\begin{aligned}
\ltwo{u-U}{\leb2(0,T;\mathscript{S})}^2
\le& C\int_0^T\big(\estfunc^2(U,\mat a,AU-\Pg f+f,\mathscript{T})
+\amin^{-1}\ltwo{\sqrt{h}\jump{\partial_t U}}{\Ga}^2\big)\\
&+C\big(\ltwo{u_0-U(0)}{}+\ltwo{\sqrt{h}\jump{U(0)}}{}\big)^2.
\end{aligned}
\end{equation}
If we assume also that $u,U\in C(0,T;\sobhz1(\W))\cap \sobh1(0,T;\leb2(\W))$, then the following bound also holds:
\begin{equation}\label{apost_semi_l1_cor}
\begin{aligned}
\ltwo{u-U}{\leb2(0,T;\mathscript{S})}^2\le&
C\int_0^T\estfunc^2(U,\mat a,AU-\Pg f+f,\mathscript{T})
+C\Big(\int_0^T\ltwo{\sqrt{h}\jump{\partial_t U}}{\Ga}\Big)^2
\\
&+C\big(\ltwo{u_0-U(0)}{}+\ltwo{\sqrt{h}\jump{U(0)}}{}\big)^2.
\end{aligned}
\end{equation}
\end{Theorem}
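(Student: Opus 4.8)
The plan is to specialize the abstract semidiscrete estimates of Theorems~\ref{apost_semi_thm} and~\ref{apost_semi_thm_l1} to the \ipdg method by fixing, once and for all, the conforming/nonconforming splitting $U=U\conpart+U\dispart$ through a single linear, $t$-independent averaging operator $\Lambda\colon S\to S\conpart$ underlying Lemma~\ref{h_oswald} (this is legitimate because in the semidiscrete regime the mesh $\mathscript{T}$ does not depend on $t$), and then replacing each term on the right-hand sides of (\ref{apost_semi}) and (\ref{apost_semi_l1}) by a computable quantity.

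First I would dispose of the elliptic-reconstruction term. By Definition~\ref{elliptic_recon} and Remark~\ref{remark_recon}, for almost every $t\in(0,T)$ the reconstruction $w(t)$ is the exact weak solution of the elliptic problem (\ref{elliptic_problem}) with datum $g=AU(t)-\Pg f(t)+f(t)\in\leb2(\W)$, and $U(t)$ is precisely its \ipdg approximation (\ref{dg_elliptic}); hence Lemma~\ref{ell_ipdg_apost} gives $\ndg{w(t)-U(t)}\le\estfunc(U(t),\mat a,AU(t)-\Pg f(t)+f(t),\mathscript{T})$, and squaring and integrating over $(0,T)$ bounds $\ltwo{w-U}{\leb2(0,T;\mathscript{S})}^2$ by $\int_0^T\estfunc^2$.

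Next I would estimate the nonconforming-part contributions. Since $U\conpart$ is continuous, $\jump{U\dispart}=\jump U$, so the \ipdg energy norm (\ref{DG-norm}) together with (\ref{magic2}) gives $\ndg{U\dispart}^2\le C\ltwo{\sqrt\sigma\jump U}{\Ga}^2$, which is absorbed by $\estfunc(U,\mat a,AU-\Pg f+f,\mathscript{T})^2$ since the latter already contains the summand $\ltwo{\sqrt\sigma\jump U}{\Ga}$. For the time-derivative term, the identity $\Lambda\partial_tU=\partial_t(\Lambda U)$ yields $\partial_tU\dispart=(\partial_tU)\dispart$, whence (\ref{magic1}) gives $\ltwo{\partial_tU\dispart}{}\le C\ltwo{\sqrt h\jump{\partial_tU}}{\Ga}$; combining this with the elementary bound $\Norm{v}_{\sobh{-1}(\W)}\le\C{PF}\ltwo v{}$ (Cauchy--Schwarz and (\ref{eqn:Poincare-Friedichs})) controls $\amin^{-1}\Norm{\partial_tU\dispart}_{\sobh{-1}(\W)}^2$ by $C\amin^{-1}\ltwo{\sqrt h\jump{\partial_tU}}{\Ga}^2$ pointwise in $t$; for the second bound one instead keeps $\ltwo{\partial_tU\dispart}{}$ and takes the $L^1(0,T)$ norm, using the hypothesized $\sobh1(0,T;\leb2(\W))$ regularity so that it is finite. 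Finally, $U\conpart(0)=\Lambda U(0)$ and (\ref{magic1}) give $\ltwo{u_0-U\conpart(0)}{}\le\ltwo{u_0-U(0)}{}+C\ltwo{\sqrt h\jump{U(0)}}{\Ga}$.

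Assembling these into (\ref{apost_semi}) (respectively (\ref{apost_semi_l1})), squaring, using $(\sum_{i=1}^{4}a_i)^2\le4\sum_{i=1}^{4}a_i^2$ to separate the four contributions, and collecting all mesh- and $\mat a$-dependent factors into a single constant $C$, yields (\ref{apost_semi_cor}) (respectively (\ref{apost_semi_l1_cor})); in the second case the hypothesis $u,U\in C(0,T;\sobhz1(\W))\cap\sobh1(0,T;\leb2(\W))$ is exactly what makes $E_c=\max_t\ltwo{e_c(t)}{}$ and $\partial_tU\dispart\in L^1(0,T;\leb2(\W))$ meaningful in the proof of Theorem~\ref{apost_semi_thm_l1}. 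The only step that needs genuine care is the commutation $\partial_tU\dispart=(\partial_tU)\dispart$, i.e., the observation that the conforming reconstruction of Lemma~\ref{h_oswald}, although not unique, can be realized by a fixed linear operator independent of $t$; once that is granted, the remainder is bookkeeping with the constants from Lemmas~\ref{h_oswald} and~\ref{ell_ipdg_apost}.
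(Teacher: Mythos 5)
Your proposal is correct and follows essentially the same route as the paper, whose proof simply combines Theorems~\ref{apost_semi_thm} and~\ref{apost_semi_thm_l1} with Lemma~\ref{ell_ipdg_apost} and the bound (\ref{magic1}). The only added value in your write-up is that you make explicit the point the paper leaves tacit, namely that the conforming/nonconforming splitting can be realized by a fixed, $t$-independent linear averaging operator so that $\partial_t U\dispart=(\partial_t U)\dispart$ and Lemma~\ref{h_oswald} may be applied to $\partial_t U$.
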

\begin{proof} The results follow immediately from combining Theorems \ref{apost_semi_thm} and \ref{apost_semi_thm_l1} with
Lemma \ref{ell_ipdg_apost}, in conjunction with (\ref{magic1}).
\end{proof}

Finally, we give a result on useful properties of the \ipdg bilinear form and of the norm, which will be useful in \secref{sec:fully-discrete-bounds}.

\begin{Lemma}[continuity of $B$ and stability of the $\leb2$-projection]\label{auxiliary_results}
Consider the notation of \secref{sec:various-DG} and let $B$ and $\ndg{\cdot}$ denote
the \ipdg bilinear form (\ref{dg_elliptic_bilinear_fd})
and the DG-norm (\ref{DG-norm}). Then for $Z,V\in \fesh$ we have
\begin{equation}\label{continuity}
B(Z,V)\le C\maxcondloc\ndg{Z}\ndg{V}.
\end{equation}
Moreover, for $v\in \sobhz1(\W)$, the $\leb2$-projection is
DG-norm-stable, i.e.,
\begin{equation}\label{stability}
\ndg{\Pg v}\le C\maxcondloc\ndg{v}.
\end{equation}
\end{Lemma}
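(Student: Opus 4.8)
The plan is to prove both assertions by bounding $B$ and $\Pg$ termwise against the two ingredients of the \ipdg energy norm (\ref{DG-norm}), namely $\ltwo{\sqrt{\mat a}\dgrad\cdot}{}$ and $\ltwo{\sqrt\sigma\jump\cdot}{\Ga}$, using only the Cauchy--Schwarz inequality, the elementwise pointwise bounds $|\sqrt{\mat a}(x)\,\xi|\le\amaxk|_\kappa^{1/2}|\xi|$ and $|\xi|\le\amink|_\kappa^{-1/2}|\sqrt{\mat a}(x)\,\xi|$ for $x\in\kappa$ (immediate from the definitions of $\amaxk$ and $\amink$ and the fact that $|\mat a|_2=|\sqrt{\mat a}|_2^2$ for SPD $\mat a$), the polynomial trace inverse estimate $\ltwo{\sqrt h\,W}{e}\le C\ltwo{W}{\kappa}$ already used in the proof of Lemma~\ref{ell_ipdg_apost}, and the comparabilities $h\simeq h_\kappa$ on $e\subset\partial\kappa$ (shape-regularity of the regular mesh $\cT$) and $\mean{\amaxk}\simeq\amaxk|_{\kappa^\pm}$ on interior edges (a consequence of (\ref{lbv})).

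For (\ref{continuity}) I would take $Z,V\in\fesh$, so that $B$ is the reduced form (\ref{dg_elliptic_bilinear_fd}). Writing $(\mat a\dgrad Z)\cdot\dgrad V=(\sqrt{\mat a}\dgrad Z)\cdot(\sqrt{\mat a}\dgrad V)$, I bound the volume term by $\ltwo{\sqrt{\mat a}\dgrad Z}{}\ltwo{\sqrt{\mat a}\dgrad V}{}\le\ndg Z\,\ndg V$ and the penalty term by $\ltwo{\sqrt\sigma\jump Z}{\Ga}\ltwo{\sqrt\sigma\jump V}{\Ga}\le\ndg Z\,\ndg V$. For the two consistency terms, on an interior edge $e=\bar\kappa^+\cap\bar\kappa^-$ I apply Cauchy--Schwarz on $e$, then on each of $\kappa^\pm$ use $|\mat a\nabla Z|\le\amaxk|_\kappa|\nabla Z|$, the trace inverse estimate, and $|\nabla Z|\le\amink|_\kappa^{-1/2}|\sqrt{\mat a}\nabla Z|$ to get $\ltwo{\mean{\mat a\nabla Z}}{e}\le C\sum_{\kappa=\kappa^\pm}\amaxk|_\kappa\amink|_\kappa^{-1/2}h_\kappa^{-1/2}\ltwo{\sqrt{\mat a}\nabla Z}{\kappa}$, while $\ltwo{\jump V}{e}=\sigma|_e^{-1/2}\ltwo{\sqrt\sigma\jump V}{e}$ with $\sigma|_e^{-1/2}\simeq(h_\kappa/\amaxk|_\kappa)^{1/2}$ by (\ref{eqn:def:penalty-sigma}). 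Multiplying these and simplifying the weights leaves exactly the factor $\sqrt{\amaxk|_\kappa/\amink|_\kappa}\le\maxcondloc$; summing over $e$, applying Cauchy--Schwarz in $\ell^2$ and using the bounded number of edges per element gives $\int_\Ga\mean{\mat a\nabla Z}\cdot\jump V\le C\maxcondloc\ltwo{\sqrt{\mat a}\dgrad Z}{}\ltwo{\sqrt\sigma\jump V}{\Ga}\le C\maxcondloc\,\ndg Z\,\ndg V$, and the $\theta$-term is identical since $|\theta|\le1$. Adding the four estimates and using $\maxcondloc\ge1$ yields (\ref{continuity}).

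For (\ref{stability}) let $v\in\sobhz1(\W)$; since $S$ has no interelement continuity, $\Pg v$ is the elementwise $\leb2$-projection onto polynomials of degree $p$, so $\jump v=0$ forces $\jump{\Pg v}=\jump{\Pg v-v}$, and $\ndg v=\ltwo{\sqrt{\mat a}\nabla v}{}$. On each $\kappa$ I invoke the $\sobh1$-stability of the $\leb2$-projection, $\ltwo{\nabla\Pg v}{\kappa}\le C\ltwo{\nabla v}{\kappa}$ (apply the inverse estimate to $\Pg v$ minus its elementwise mean and then a Poincaré inequality, exactly as for the function $\eta$ in the proof of Lemma~\ref{ell_ipdg_apost}), giving $\ltwo{\sqrt{\mat a}\dgrad\Pg v}{\kappa}\le\amaxk|_\kappa^{1/2}\ltwo{\nabla\Pg v}{\kappa}\le C\maxcondloc\ltwo{\sqrt{\mat a}\nabla v}{\kappa}$. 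For the jumps, on $e\subset\partial\kappa$ the continuous trace inequality together with the approximation bound $\ltwo{v-\Pg v}{\kappa}\le Ch_\kappa\ltwo{\nabla v}{\kappa}$ and the same $\sobh1$-stability give $\ltwo{(v-\Pg v)|_\kappa}{e}\le Ch_\kappa^{1/2}\ltwo{\nabla v}{\kappa}$; since $\sigma|_e h_\kappa\simeq\amaxk|_\kappa$, this yields $\ltwo{\sqrt\sigma\jump{\Pg v}}{e}\le C\maxcondloc\big(\ltwo{\sqrt{\mat a}\nabla v}{\kappa^+}+\ltwo{\sqrt{\mat a}\nabla v}{\kappa^-}\big)$. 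Squaring, summing over edges and elements (bounded overlap), and combining the two contributions gives $\ndg{\Pg v}^2\le C\maxcondloc^2\ltwo{\sqrt{\mat a}\nabla v}{}^2=C\maxcondloc^2\,\ndg v^2$, which is (\ref{stability}).

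The main obstacle is the weight bookkeeping in the consistency terms of (\ref{continuity}): because $\sqrt{\mat a}\nabla Z$ is not a polynomial, one is forced to route $\mat a\nabla Z$ through $\nabla Z$ before invoking the polynomial trace inverse estimate, and one must then check that the local factors $\amaxk$, $\amink$, $\mean{\amaxk}$ and the element sizes collapse to exactly $\maxcondloc$ rather than to a higher power of the local condition number — which is precisely where shape-regularity and the jump comparability (\ref{lbv}) enter. Everything else reduces to Cauchy--Schwarz, standard approximation and inverse estimates on a single element, and finite-overlap counting.
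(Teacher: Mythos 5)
Your proposal is correct and follows essentially the same route as the paper: the continuity bound is exactly the weight bookkeeping already carried out for the bound on $B(e_d,e_c)$ in the proof of Lemma~\ref{ell_ipdg_apost} (which the paper simply cites rather than repeating), and the stability bound rests on the same local inverse, trace and approximation estimates. The only cosmetic difference is that the paper handles the volume part of $\ndg{\Pg v}$ by writing $\dgrad\Pg v=\dgrad(\Pg v-\Pg_0 v)$ and applying an inverse estimate plus the $\leb2$-best-approximation property, whereas you invoke elementwise $\sobh1$-stability of $\Pg$ directly; the two arguments use identical ingredients and yield the same factor $\maxcondloc$.
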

\begin{proof}
We omit the proof of (\ref{continuity}) which mimics that of (\ref{ell_first}).
For stability, note
\begin{equation}
\begin{aligned}
\ndg{\Pg v}^2=&\ltwo{\sqrt{\mat a}\dgrad(\Pg v-\Pg_0 v)}{}^2
+\ltwo{\sqrt{\sigma}\jump{v-\Pg v}}{\Ga}^2\\
\le&C\Big(\ltwo{\amaxk^{\ha} h^{-1}(\Pg v-\Pg_0 v)}{}^2
+\ltwo{\amaxk^{\ha} h^{-1}(v-\Pg v)}{}^2
+\ltwo{\amaxk^{\ha} \dgrad(v-\Pg v)}{}^2\Big)\\
\le&C\Big(\ltwo{\amaxk^{\ha} h^{-1}(v-\Pg_0 v)}{}^2
+\ltwo{\amaxk^{\ha} \dgrad(v-\Pg v)}{}^2\Big)\\
\le&C\ltwo{(\amaxk/\amink)^{\ha} \sqrt{\mat a}\nabla_{\mathscript{T}}v}{}^2,
\end{aligned}
\end{equation}
which implies (\ref{stability}).
\end{proof}

\section{\Aposteriori error bound for the fully discrete scheme}
\label{sec:fully-discrete-bounds}
In this section we discuss the abstract error analysis for the fully
discrete scheme defined in \S\ref{sec:fully-discrete-solution}.
\subsection{The elliptic reconstruction and the basic error relation}
Extend the sequence $\seq{\un}$ into a continuous piecewise linear
function of time:
\begin{equation}
  \label{eqn:fully-discrete-solution-extended}
  \U(0)=\UZERO\AND \U(t)=\elln(t)\un+\ellno(t)\uno,
\end{equation}
for $t\in\In$, and $\rangefromto n1N$,
where the functions $\elln$ and $\ellno$ are the Lagrange
basis functions
\begin{equation}
  \elln(t):=\frac{t-\tno}{\taun}\charfun{\clinter\tno\tn}
  +\frac{\tnp1-t}{\tau_{n+1}}\charfun{\clinter\tn{\tnp1}}.
\end{equation}
Using these time extensions and the (time and mesh dependent) discrete
elliptic operator $\discan$ of Definition \ref{elliptic_recon} with
respect to $\fes n$, defined by
\begin{equation}
  \discan Z\in\fes n\text{ such that }\ltwop{\discan Z}{V}
  =\bbilp n Z V\Forall V\in\fes n,
\end{equation}
we can write the scheme
(\ref{eqn:fully-discrete-Euler}) in the following pointwise form:
\begin{equation}
  \label{eqn:fully-discrete-Euler:pointwise}
  \pdt \U(t)+\discan\un=(\interpol n\uno-\uno)/\taun+\lproj n\fn,
\end{equation}
for all $t\in\opinter\tno\tn,\rangefromto n1N$.

We like to warn at this point that \emph{we use the same symbol $\U(t)$
to indicate the fully discrete solution time-extension in this
section, and the semidiscrete solution in \secref{sec:semidiscrete}.}
This should cause no confusion as long as the two cases are kept in
separate sections.

For each fixed $t\in\clinter0T$, and the corresponding $\rangefromto
n1N$ such that $t\in\opclinter\tno\tn$, we define the
\emph{time-dependent elliptic reconstruction} to be the function
$\w(t)\in\honezw$ satisfying
\begin{equation}
  \label{eqn:interpolated-reconstruction}
  \w(t)=\elln(t)\wn+\ellno(t)\wno_+,
\end{equation}
where $\wn\in\honezw$ is the \emph{elliptic reconstruction} of $\un$
defined implicitly as the (weak) solution of the elliptic problem with
data $\discan\un$, i.e., $\wn$ satisfies
\begin{gather}
  \operan\wn=\discan\un,
\end{gather}
and $\wno_+$ is the \emph{forward elliptic reconstruction} of $\uno$,
defined as the solution of the problem
\begin{equation}
  \opera^{n-1}\wno_+=\discano_+\interpol n\uno,
\end{equation}
where the operator $\funk{\discano_+}{\fes n}{\fes n}$ is defined by
\begin{equation}
\discano_+ Z\in\fes n\text{ such that }
  \ltwop{\discano_+Z}V
  =\bbilpsub{n-1}+ Z V\Forall V\in\fes n,
\end{equation}
$\bilform^{n-1}_+$ being the nonconforming bilinear
form corresponding to $\opera^{n-1}$, but with respect to the space
$\fes{n}$ (in contrast to $\bilform^{n-1}$ which is defined with
respect to $\fes{n-1}$).  For instance, for \ipdg, we have
\begin{multline}
  \bbilpsub{n-1}+Z V
  :=\sum_{\kappa\in\cT_n}\int_\kappa(\mat a(\tno)\grad Z)\inner\grad V
  \\
  +\int_{\Ga_n}\big(\theta\mean{\mat a(\tno)\Pg_n\grad V}\inner\jump Z
  -\mean{\mat a(\tno)\Pg_n\grad Z}\inner\jump V+\sigma_n\jump Z\inner\jump V\big).
\end{multline}
Using this definition of $\w$ on $\clinter\tno\tn$, the equation
(\ref{eqn:fully-discrete-Euler:pointwise}), implies
\begin{equation}
  \label{eqn:fully-discrete-Euler:pointwise-reconstructed}
  \pdt \U(t)+\opera(t)\w(t)
  =(\interpol n\uno-\uno)/\taun+\fn+\opera(t)\w(t)-\operan\wn.
\end{equation}
Subtracting the exact equation from this identity we obtain
\begin{equation}
  \label{eqn:discrete.parabolic-elliptic.relation.explicit}
  \pdt{\qb{\U-u}}+\opera\qb{\w-u}
  =
  \fraclpf{\interpol n\uno-\uno}\taun
  +\qp{\fn-f}
  +\qp{\opera\w-\operan\wn}
\end{equation}
for all $t\in\opinter\tno\tn$ and $\rangefromto n1N$, this leads to
the following technical basis of this section.
\begin{Lemma}[fully discrete error relation]
  \label{lem:discrete.parabolic-elliptic.relation}
  With the notation introduced in this section, let $e=\U-u$ (full
  error), $\rho:=\w-u$ (parabolic error) and $\epsi:=\w-\U$ (elliptic
  error).  Then we have
  \begin{equation}
    \label{eqn:discrete.parabolic-elliptic.relation}
    \pdt e+\opera\rho
    =
    \data n
    +
    \opera\w-\operan\wn,
  \end{equation}
  on $\clinter\tno\tn$, for all $\rangefromto n1N$.
\end{Lemma}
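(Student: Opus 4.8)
The plan is to observe that the identity has, in essence, already been assembled in the discussion preceding the statement: equation~(\ref{eqn:discrete.parabolic-elliptic.relation.explicit}) is exactly~(\ref{eqn:discrete.parabolic-elliptic.relation}) once one recognises its first two right-hand-side terms as $\data n$ and relabels $\U-u=e$ and $\w-u=\rho$. So I would simply retrace that computation in a self-contained way.

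Concretely I would proceed in four short steps. First, rewrite the fully discrete scheme~(\ref{eqn:fully-discrete-Euler}) in the pointwise form~(\ref{eqn:fully-discrete-Euler:pointwise}): test against $V\in\fes n$, replace $\bbilp n\un V$ by $\ltwop{\discan\un}V$, and use that the piecewise-linear-in-time extension satisfies $\pdt\U(t)=(\un-\uno)/\taun$, a constant element of $\leb2(\W)$, on $\opinter\tno\tn$. Second, invoke the defining relations of the time-dependent elliptic reconstructions introduced around~(\ref{eqn:interpolated-reconstruction}); the one needed here is $\operan\wn=\discan\un$, read as an identity in $\sobh{-1}(\W)$, which permits substituting $\operan\wn$ for $\discan\un$ and then adding and subtracting $\opera(t)\w(t)$ so as to reach the reconstructed identity~(\ref{eqn:fully-discrete-Euler:pointwise-reconstructed}). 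Third, subtract from it the weak form of the exact equation, namely $\pdt u(t)+\opera(t)u(t)=f(t)$ in $\sobh{-1}(\W)$, which is the consistency relation~(\ref{consistency}) read with the time-dependent operator $\opera(t)$ in the sense of~(\ref{cons}). Fourth, collect terms: the left-hand side is $\pdt e+\opera\rho$ and the right-hand side is $\data n+\opera\w-\operan\wn$ by the definition of $\data n$; this holds on $\opinter\tno\tn$ for each $\rangefromto n1N$, hence on $\In$.

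There is no real difficulty in the argument; the one point requiring care is the bookkeeping of function spaces. The scheme lives in $\fes n$, so the datum $\fn$ enters only through its $\leb2$-projection $\lproj n\fn$, whereas the operators $\opera(t)$ and $\operan$ map into $\sobh{-1}(\W)$ and every reconstruction identity must be read there (tested against $\sobhz1(\W)$); the mismatch $\lproj n\fn-\fn$ must be absorbed into the elliptic reconstruction of $\un$ --- exactly the purpose served by the extra term $-\Pg f+f$ in the semidiscrete Definition~\ref{elliptic_recon} --- so that only the clean consistency error $\fn-f$ survives inside $\data n$. A second point worth emphasising is that the residual $\opera(t)\w(t)-\operan\wn$ does \emph{not} vanish: because $\w(t)$ is the piecewise-linear-in-time interpolant~(\ref{eqn:interpolated-reconstruction}) of $\wn$ and $\wno_+$, and the diffusion tensor $\mat a$ (hence $\opera(t)$) depends on $t$, this term is zero only at $t=\tn$; it is retained deliberately, being precisely the quantity to be estimated separately in the sequel.
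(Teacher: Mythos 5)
Your proposal is correct and follows essentially the same route as the paper: the paper's own proof is a one-line substitution of the notation $e$, $\rho$, $\epsi$ into the identity (\ref{eqn:discrete.parabolic-elliptic.relation.explicit}), which was obtained in the preceding text exactly by the four steps you retrace (pointwise form of the scheme, reconstruction identity $\operan\wn=\discan\un$, adding and subtracting $\opera(t)\w(t)$, subtracting the exact equation).

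One bookkeeping remark on the point you raise about $\lproj n\fn$ versus $\fn$: your explanation does not quite match the paper's definitions. In the fully discrete section the reconstruction $\wn$ is defined by $\operan\wn=\discan\un$ \emph{without} the data correction $-\Pg f+f$ of the semidiscrete Definition~\ref{elliptic_recon}, so if you use that definition literally (as in your step two), the right-hand side of (\ref{eqn:fully-discrete-Euler:pointwise-reconstructed}) carries $\lproj n\fn$, not $\fn$, and the passage to $\data n$ as stated silently absorbs the data-oscillation term $\fn-\lproj n\fn$ (this is an imprecision of the paper itself). Your suggested remedy --- folding the correction into the reconstruction data, i.e.\ defining $\operan\wn=\discan\un-\lproj n\fn+\fn$ in analogy with the semidiscrete case --- is a legitimate alternative that makes the lemma hold exactly as written; the other option is to keep the paper's definition and carry the extra oscillation term explicitly. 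Either way the structure of the argument, and of the resulting estimators, is unaffected, but you cannot simultaneously use $\operan\wn=\discan\un$ and claim the mismatch is absorbed by the reconstruction, as your caveat paragraph currently does.
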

\begin{proof}
Replace the new notation for the errors in
(\ref{eqn:discrete.parabolic-elliptic.relation.explicit}).
\end{proof}
\subsection{Mesh interaction, DG spaces and decomposition}
\label{sec:mesh-interaction-decomposition}
The domain $\W$'s subdivisions \aka{meshes} $\seqs{\cT_n}{\rangefromto
  n0N}$ are \changes{
  assumed to be \emph{compatible} in the sense
  that for any two consecutive meshes, say $\cT_n$ and $\cT_{n-1}$, we
  have that $\cT_n$ is a constructed from $\cT_{n-1}$ in two main
  steps: (1) $\cT_{n-1}$ is locally coarsened by merging a chosen
  subset of elements then (2) the resulting coarsened mesh is locally
  refined\cite{lakkis-makridakis:06,LakkisPryer:10}.  This procedure
  leads to meshes which are \emph{locally} a refinement of one
  another.  For example, in the following diagram the mesh $\cT_{n-1}$
  has some elements marked (in red) for coarsening and (in blue) for
  refinement and the mesh $\cT_n$ can be thus obtained in the two steps:
}
\begin{equation}
  \begin{tikzpicture}
    \draw 
    (0,0)--(0,1) 
    node[pos=0.5,left]{$\cT_{n-1}$=}
    --(1,1)--(1,0)--cycle;
    \draw[line width=0pt,fill=blue!20] 
    (0.5,0.5)--++(0,0.5)--++(0.5,0)--++(0,-0.5)--cycle;
    \draw[line width=0pt,fill=red!20] 
    (0.5,0)--++(0,0.5)--++(0.5,0)--++(0,-0.5)--cycle;
    \draw 
    (.5,0)--++(0,1);
    \draw 
    (0,.5)--++(1,0);
    \draw 
    (.75,0)--++(0,.5);
    \draw 
    (0.5,.25)--++(.5,0);
    \draw[-stealth] 
    (1.5,.5)--++(1,0) 
    node[pos=0.5,above]{coarsen};
    \draw 
    (3,0)--(3,1)--(4,1)--(4,0)--cycle;
    \draw 
    (3.5,0)--++(0,1);
    \draw 
    (3,.5)--++(1,0);
    \draw[line width=0pt,fill=blue!20] 
    (3.5,0.5)--++(0,0.5)--++(0.5,0)--++(0,-0.5)--cycle;
    \draw[-stealth] 
    (4.5,.5)--++(1,0) node[pos=0.5,above]{refine};
    \draw 
    (6,0)--++(0,1)--++(1,0)-- 
    node[pos=0.5,right]{$=\cT_n$}++(0,-1)--cycle;
    \draw 
    (6.5,0)--++(0,1);
    \draw 
    (6,.5)--++(1,0);
    \draw 
    (6.75,0.5)--++(0,.5);
    \draw 
    (6.5,.75)--++(.5,0);
  \end{tikzpicture}
\end{equation}

For each $\rangefromto n1N$, we denote by $\check\cT_n$ the \emph{coarsest
common refinement} of $\cT_{n-1}$ and $\cT_n$.  The finite element
space corresponding to $\cT_n$ being $\fes n$, we shall be using the
space $\ccrfes n$ which is the finite element space with respect to
$\check\cT_n$.  Furthermore we denote by $\hopfes n:=\fes
n+\sobhz1(\W)$ and by $\hopfesh:=\sumifromto n0N\hopfes n$, the
minimal space that contains all these spaces.  These spaces are
equipped with the same type of norms given as in Section \ref{dg_heat}.

The conforming-nonconforming decomposition of $\U$, that we shall be
using is performed as follows:
\begin{LetterList}
\item
  For each given $\tn$, with $\rangefromto n1{N-1}$ we assume that
  the following two decompositions exist for $\un$,
  \begin{equation}
    \begin{aligned}
      &\un=\un\dispart+\un\conpart
      \text{ with respect to the mesh }
      \check\cT_n,
      \\
      \AND
      &\un=\un\dispartplus+\un\conpartplus
      \text{ with respect to the mesh }
      \check\cT_{n+1}.
    \end{aligned}
  \end{equation}
  Note that if the mesh changes, in general, $\un\dispart$ and
  $\un\dispartplus$ need not be equal functions.
\item
  For each $t\in\opclinter\tno\tn$, $\rangefromto n1N$, we define
  \begin{equation}
    \label{eqn:dis-con-part-time-extension}
    \U\dispart(t)
    :=\ellno(t)\uno\dispartplus+\elln(t)\un\dispart
    \AND
    \U\conpart(t)
    :=\ellno(t)\uno\conpartplus+\elln(t)\un\conpart.
  \end{equation}
\end{LetterList}

\begin{Definition}[\aposteriori error indicators]
  \label{def:fully-discrete:error-indicators}
  We set here some notation that is useful to state the main results
  concisely.  We make some assumptions in the process.  For each time
  interval $\clinter\tno\tn$, with $\rangefromto n1N$, we introduce
  \aposteriori error indicators as follows.
  \begin{LetterList}
  \item
    We assume that there exist $\C{els},\C{dgc}>0$ such that
    \begin{equation}
      \label{eqn:L2-projection-stability}
      \enorm{\lproj n v}\leq \C{els}\enorm{v},\Forall v\in\honezw,
    \end{equation}
    and
    \begin{equation}
      \label{eqn:bilinear-dgspace-continuity}
      \bbilp n Z V\leq\C{dgc}\enorm Z\enorm V ,\Forall Z,V\in\fes n.
    \end{equation}
    The \emph{time-stepping indicator} is given by
    \begin{equation}
      \label{eqn:fully-discrete:time-indicator}
      \indtime n
      :=\frac{\C{els}\C{dgc}}{\sqrt{3}}
      \enorm{\interpol n\uno-\un}.
    \end{equation}
  \item
    The \emph{time data-approximation indicator} is
    \begin{equation}
      \label{eqn:fully-discrete:time-data-indicator}
      \indftimeosc n:=
      \qp{\int_\tno^\tn\frac{\Norm{f(\tn)-f(s)}_{\sobh{-1}(\W)}^2}
    {\taun \amin(s)}\d s}^{1/2}
      .
    \end{equation}
  \item
    The \emph{mesh-change (or coarsening) indicator} is defined as
    \begin{equation}
      \label{eqn:fully-discrete:mesh-coarsening-indicator}
      \indcoarse n:=
      \frac{\Norm{\interpol n\uno-\uno}_\hmonew}{\taun}
      \qp{\frac1\taun\int_\tno^\tn\frac1\amin}^{1/2}.
    \end{equation}
  \item
    The \emph{parabolic nonconforming part indicator} is given
    by
    \begin{equation}
      \label{eqn:fully-discrete:nonconformity-indicator}
      \indnonconf n:=
      \frac{\Norm{\un\dispart-\uno\dispartplus}_{\sobh{-1}(\W)}}{\taun}
      \qp{\frac1\taun
    \int_\tno^\tn\frac1{\amin}}^{1/2},
    \end{equation}
    and the \emph{elliptic nonconforming part indicator} defined as
    \begin{equation}
      \label{eqn:fully-discrete:nonconformity-elliptic-indicator}
      \indnonconfel n:=
      \qp{\enorm{\un\dispart}^2+\enorm{\uno\dispartplus}^2}^{1/2}.
    \end{equation}
  \item
    The \emph{space (or elliptic) error indicator} is given by
    \begin{equation}
      \label{eqn:fully-discrete:elliptic-indicator}
      \indellipt n
      :=\cE(\un,\mat a(\tn),\discan\un,\cT_n),
    \end{equation}
    where $\cE$ is a particular choice of an energy-norm elliptic
    error estimator for the given spatial method.  Furthermore the
    \emph{forward elliptic error indicator}, due to mesh change, is
    given by
    \begin{equation}
      \label{eqn:fully-discrete:forward-elliptic-indicator}
      \indelliptplus n
      :=\cE(\interpol n\uno,\mat a(\tno),\discano_+\interpol n\uno,\cT_n).
    \end{equation}
  \item Consider first the auxiliary
    function of time
    \begin{equation}
      \difflipa n(s):=
      \Norm{\,\abs{\difflipA n s}_2}_{\leb\infty(\W)},
      \:s\in\clinter\tno\tn
    \end{equation}
    where the inner matrix norm is the Euclidean-induced one.  This
    definition is possible thanks to $\mat a$'s being symmetric positive
    definite. The function $\difflipa n$ is identically zero if the
    operator is time-independent, otherwise it acts like the numerator
    of $\mat a$'s normalized Hölder-continuity ratio.) Then we may
    define the following \emph{operator approximation indicators}
    \begin{gather}
      \label{eqn:fully-discrete:forward-operator-indicator}
      \indoperafor n
      :=
      \frac{\Norm{\discan\un}_{\sobh{-1}(\W)}}{\amin^n}
      \qp{\frac1\taun\int_\tno^\tn\elln^2\difflipa n^2}^{1/2}
      ,
      \\
      \label{eqn:fully-discrete:backward-operator-indicator}
      \indoperaback n
      :=
      \frac{\Norm{\discano_+\interpol n\uno}_{\sobh{-1}(\W)}}{\amin^{n-1}}
      \qp{\frac1\taun\int_\tno^\tn\ellno^2\difflipa{n-1}^2}^{1/2}
      ,
      \\
      \label{eqn:fully-discrete:mesh-operator-indicator}
      \indoperamesh n
      :=
      \Norm{\qb{\discano_+-\discan}\interpol n\uno}_{\sobh{-1}(\W)}
      \qp{\frac1\taun\int_\tno^\tn\frac{\ellno^2}{\amin}}^{1/2}
      ,
      \\
      \indopera n
      :=
      \indoperamesh n+\indoperafor n+\indoperaback n
      .
    \end{gather}
    \item
    \changes{
      Finally, the \emph{parabolic nonconforming part indicator of
        higher order} is given by
    \begin{equation}
      \label{eqn:fully-discrete:nonconformity-indicator-high}
      \kappa_n:=
      \frac{\Norm{\un\dispart-\uno\dispartplus}}{\taun}.
    \end{equation}}
  \end{LetterList}
\end{Definition}
\changes{
\begin{Remark}[computing the $\sobh{-1}(\W)$ norm]
  The norm $\sobh{-1}(\W)$ appearing in the indicators is easily
  computable at the cost of inverting a stiffness
  matrix~\cite{LakkisPryer:10}.  For many practical purposes, though
  this has to be replaced by the $\leb2(\W)$ norm times the
  Poincaré--Friedrichs constant $\C{PF}$ defined in
  (\ref{eqn:Poincare-Friedichs}) which implies the \emph{dual
    inequality}
  \begin{equation}
    \Norm{v}_{\sobh{-1}(\W)}\leq\C{PF}\Norm{v},\Forall v\in\leb2(\W).
   \end{equation}
  Note that this will not deteriorate most of the indicators.  The
  only indicators that may be affected by this change are
  $\indnonconf{n}$ and $\indnonconfel{n}$, and it may be possible to
  provide a sharp bound for negative Sobolev norms of $U_d$, but
  this seems to remain an open question at the time of writing.
\end{Remark}
}
\begin{Remark}[computing $\discan\un$ and similar terms]
  The operators $\discan$ and $\discan_+$ appearing in Definition
  \ref{def:fully-discrete:error-indicators}, can be realized in two
  ways in practice:
  \begin{LetterList}
  \item
    To save time, one can use the fully discrete scheme in pointwise
    form (\ref{eqn:fully-discrete-Euler:pointwise}) to evaluate some
    of these terms.  For example
    \begin{equation}
      \discan\un=\lproj n\fn-\fraclpf{\un-\interpol n\uno}\taun.
    \end{equation}
  \item
    The corresponding stiffness matrix could be computed and applied
    to the argument.  This seems to be necessary for $\discan_+$.
  \end{LetterList}
\end{Remark}
\begin{Remark}[mesh-change prediction]
  The mesh-change indicator $\indcoarse n$ can be \emph{precomputed}
  in a given computation.  Indeed, this term does not use explicitly
  any quantity deriving from the solution of the $n$-th Euler time-step
  (\ref{eqn:fully-discrete-Euler}).  This term is usually computable
  when a precise operator $\interpol n$ is available and it involves
  only local matrix-vector operations on each group of elements to be
  coarsened.
\end{Remark}
\begin{Remark}[an alternative time-stepping indicator]
  An equally valid definition for the time-stepping estimator
  $\indtime n$ can be given by
  \begin{equation}
    \indtime n:=
    \frac1{\sqrt3}
    \Norm{\discano_+\interpol n\uno-\discan\un}_{\sobh{-1}(\W)}.
  \end{equation}
  This alternative definition has the advantage of having no
  constants, but it is more complicated to compute and it must be
  reduced to the $\leb2(\W)$ norm by using the Poincaré--Friedrichs
  inequality.  A good side effect of this alternative choice is that
  in this case the indicator $\indoperamesh n$ vanishes; all other
  estimators remain unchanged.
\end{Remark}
\begin{Theorem}[abstract \aposteriori energy-error bound for Euler--DG]
  \label{lem:fully-discrete:fundamental-aposteriori-estimate}
  Let $\seqs{\un}n$ be the solution of
  (\ref{eqn:fully-discrete-Euler}) and $\U$ its time-extension as
  defined by (\ref{eqn:fully-discrete-solution-extended}) and $\w$ the
  elliptic reconstruction as defined by
  (\ref{eqn:interpolated-reconstruction}).  Then, with reference to
  Definition~\ref{def:fully-discrete:error-indicators}, for each $\rangefromto
  m1N$, we have
  \changes{
  \begin{equation}
  \begin{split}
    \label{eqn:fully-discrete:fundamental-aposteriori-estimate}
    \Norm{u-\U}_{\espace{0,\tm}}
    \leq &
      \Norm{u(0)-\U\conpart(0)}+3\parest m
    +{\sqrt2}\ellest m\\
    &+\qp{\frac12\sumifromto n1m\indnonconfel n^2\taun}^{1/2}
    +\sqrt{\frac{3}{2}}\sum_{n=1}^{m-1}\kappa_n\taun
    ,
    \end{split}
  \end{equation}
  }
  where the \emph{parabolic-error estimator} is defined as
  \begin{equation}
    \label{eqn:parabolic-error-esimator}
    \parest m
    :=\qp{\sumifromto n1m
    \qp{\indtime n+\indopera n+\indfosc n+\indcoarse n+\indnonconf n}^2\taun}^{1/2}
  \end{equation}
  and the \emph{elliptic estimator} is defined by
  \begin{equation}
    \ellest m:=\qp{
    \sumifromto n1m\qp{\indellipt n^2+\indelliptplus n^2}\taun}^{1/2}.
  \end{equation}
\end{Theorem}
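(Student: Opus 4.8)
The plan is to run the semidiscrete energy argument of Theorems~\ref{apost_semi_thm} and~\ref{apost_semi_thm_l1} in the fully discrete setting, starting from the error relation of Lemma~\ref{lem:discrete.parabolic-elliptic.relation} and charging the (now non-vanishing) right-hand side to the indicators of Definition~\ref{def:fully-discrete:error-indicators}. Fix $m$. Following \secref{sec:mesh-interaction-decomposition} set $e\conpart:=e-\U\dispart=\U\conpart-u$ and $\epsi\conpart:=\epsi+\U\dispart=\w-\U\conpart$; both lie in $\honezw$ at every $t$ (so they are legitimate arguments for $\opera(t)$) and $e\conpart=\rho-\epsi\conpart$. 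Testing (\ref{eqn:discrete.parabolic-elliptic.relation}) with $v=e\conpart$, using $\pdt e=\pdt{e\conpart}+\pdt{\U\dispart}$ and $\ltwop{\opera\rho}{e\conpart}=\enorm{\rho}^2-\ltwop{\opera\rho}{\epsi\conpart}$ (the latter by (\ref{cons}) and (\ref{norm_cons})), I obtain, on each $\In$,
\begin{equation}
\begin{split}
  \ha\ud_t\ltwo{e\conpart}{}^2+\enorm{\rho}^2
  =&\;\ltwop{\opera\rho}{\epsi\conpart}-\ltwop{\pdt{\U\dispart}}{e\conpart}\\
  &+\ltwop{\data n+\opera\w-\operan\wn}{e\conpart}.
\end{split}
\end{equation}

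The crux is the term $\opera(t)\w(t)-\operan\wn$. Using $\elln+\ellno\equiv1$ on $\In$, the defining identities $\operan\wn=\discan\un$ and $\opera^{n-1}w^{n-1}_+=\discano_+\interpol n\uno$, and inserting $\pm\,\discan\interpol n\uno$, I will split it as
\begin{equation}
\begin{split}
  \opera(t)\w(t)-\operan\wn
  =&\;\elln\,[\opera(t)-\operan]\wn
  +\ellno\,[\opera(t)-\opera^{n-1}]w^{n-1}_+\\
  &+\ellno\,[\discano_+-\discan]\interpol n\uno
  +\ellno\,\discan[\interpol n\uno-\un],
\end{split}
\end{equation}
and pair each piece with $e\conpart$. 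The two genuine operator differences are estimated in $\sobh{-1}(\W)$ through the symmetric positive definiteness of $\mat a$ and the pointwise matrix bounds carried by $\difflipa n,\difflipa{n-1}$, together with $\enorm{\wn}\le\Norm{\discan\un}_{\sobh{-1}}/\sqrt{\amin^n}$ (and its $w^{n-1}_+$ analogue); after dividing by $\amin^n,\amin^{n-1}$ and a Cauchy--Schwarz in time this yields $\indoperafor n$ and $\indoperaback n$. The piece $[\discano_+-\discan]\interpol n\uno$ gives $\indoperamesh n$ directly, and $\discan[\interpol n\uno-\un]$, tested rather against $\lproj n e\conpart$ and controlled via (\ref{eqn:L2-projection-stability}) and (\ref{eqn:bilinear-dgspace-continuity}) by $\C{els}\C{dgc}\enorm{\interpol n\uno-\un}\enorm{e\conpart}$, gives $\indtime n$; these three together form $\indopera n$. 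In the same manner $\ltwop{(\interpol n\uno-\uno)/\taun}{e\conpart}$ gives $\indcoarse n$, $\ltwop{\fn-f}{e\conpart}=\ltwop{f(\tn)-f(t)}{e\conpart}$ gives $\indfosc n$, $\ltwop{\opera\rho}{\epsi\conpart}\le\enorm{\rho}\enorm{\epsi\conpart}$, and $\ltwop{\pdt{\U\dispart}}{e\conpart}$ is bounded once by $\sobh{-1}$--$\sobhz1$ duality (producing $\indnonconf n$) and once by Cauchy--Schwarz in $\leb2(\W)$ (producing $\kappa_n\ltwo{e\conpart}{}$); every passage from $\ltwo{\grad e\conpart}{}$ to the energy norm costs the factor $1/\sqrt{\amin}$, and I use throughout $\enorm{e\conpart}\le\enorm{\rho}+\enorm{\epsi\conpart}$.

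With these estimates, Young's inequality absorbs the $\enorm{\rho}^2$-proportional part of the right-hand side into the coercive term, while the $\ltwo{e\conpart}{}$-proportional part is handled by a peak argument as in the proof of Theorem~\ref{apost_semi_thm_l1}, introducing $E_c:=\max_{0\le t\le\tm}\ltwo{e\conpart(t)}{}$; this is what generates the $\leb1$-in-time accumulation $\sum_n\kappa_n\taun$. Integrating over $[0,\tm]$ one must keep track of the fact that $\U\conpart$, hence $e\conpart$, jumps at the interior time nodes while $u$ is continuous, so telescoping $\sum_n\int_\In\ha\ud_t\ltwo{e\conpart}{}^2$ leaves jump contributions $\ha\ltwop{\un\conpart-\un\conpartplus}{e\conpart(\tn^-)+e\conpart(\tn^+)}$ which, via $\un\conpart-\un\conpartplus=\un\dispartplus-\un\dispart$, are again dominated by $E_c$ and $\leb2$-norms of nonconforming-part differences; a rearrangement of the type used in the proof of Theorem~\ref{apost_semi_thm_l1} then bounds $\ltwo{\rho}{\espace{0,\tm}}$ in terms of $\ltwo{u(0)-\U\conpart(0)}{}$, $\ltwo{\epsi\conpart}{\espace{0,\tm}}$, $\parest m$ and the $\kappa_n$-sum. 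Finally the triangle inequality on $e=\rho-\epsi$ with $\epsi=\epsi\conpart-\U\dispart$, the Lagrange-weight bound $\int_\In\enorm{\U\dispart}^2\le\ha\taun\indnonconfel n^2$ (so that $\ltwo{\U\dispart}{\espace{0,\tm}}\le(\ha\sum_n\indnonconfel n^2\taun)^{1/2}$), and the elliptic a posteriori estimate (\ref{apost_ell}) applied to $\wn-\un$ and $w^{n-1}_+-\interpol n\uno$ (producing $\indellipt n,\indelliptplus n$, hence $\ellest m$) assemble (\ref{eqn:fully-discrete:fundamental-aposteriori-estimate}) with the stated constants.

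The main obstacle will be precisely the decomposition and estimation of $\opera\w-\operan\wn$: one has to disentangle simultaneously the time variation of $\mat a$ (the $\difflipa n,\difflipa{n-1}$ terms), the mesh change encoded in $\discano_+$ versus $\discan$, and the data-transfer operator $\interpol n$, and keep every fragment in a form that maps onto exactly one prescribed indicator---in particular getting the $\amin^n,\amin^{n-1}$ normalisations and the $\elln^2,\ellno^2$ time-weights right. A secondary but delicate point is the bookkeeping of the $e\conpart$-jumps at the time nodes, which is what forces the peak argument and brings in the $\kappa_n$ and $\indnonconfel n$ contributions.
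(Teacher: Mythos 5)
Your proposal follows essentially the same route as the paper's own proof: the same energy identity tested with the conforming error part, the identical four-way splitting of $\opera\w-\operan\wn$ mapped onto $\indoperafor n,\indoperaback n,\indoperamesh n,\indtime n$, duality bounds for the data and $\pdt\U\dispart$ terms, the node-jump bookkeeping of $e\conpart$ handled by a peak (maximum-over-nodes) argument producing the $\leb1$-accumulated $\kappa_n$ sum, and the final absorption/triangle-inequality assembly with the Lagrange-weight factor $\tfrac12$ for $\ellest m$ and $\indnonconfel n$. This matches the paper's argument in structure and in all essential estimates, so no further comparison is needed.
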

We spread the proof in paragraphs
\ref{sec:energy-identity}--\ref{sec:fully-discrete:proof-basic-lemma}.
\subsection{The energy identity}
\label{sec:energy-identity}
As in the proof of Theorem \ref{apost_semi_thm} to get an energy
identity out of (\ref{eqn:discrete.parabolic-elliptic.relation}), we
will test with the error's \emph{conforming part}
\begin{equation}
  e\conpart:=e-\U\dispart=\rho+\epsic.
\end{equation}
Start with combining
(\ref{eqn:discrete.parabolic-elliptic.relation}) and definition
(\ref{eqn:interpolated-reconstruction}) to get
\begin{equation}
    \pdt e\conpart+\opera\rho
    =
    \pdt\U\dispart+\data n+\opera\w-\operan\wn
\end{equation}
Testing the above relation with $e\conpart$ we obtain the following
\emph{energy identity}:
\begin{equation}
  \label{eqn:discrete-energy-identity-differential}
  \begin{split}
    \frac12\dt\Norm{e\conpart}^2+\enorm\rho^2
    =&
    \ltwop{\pdt e\conpart}{e\conpart}+\bbil{\rho}{\rho}
    \\
    =&
    \bbil{\rho}{\epsic}
    +
    \ltwop{\pdt\U\dispart}{e\conpart}
    +
    \duality{\opera\w-\operan\wn}{e\conpart}
    \\&
    +
    \ltwop{\data n}{e\conpart}.
  \end{split}
\end{equation}
Integrating (\ref{eqn:discrete-energy-identity-differential}) from $0$
to $\tm\in\opclinter0T$, for an integer $m$, $1\leq m\leq N$ fixed, we
may write the integral form of the energy identity
\begin{equation}
  \label{eqn:discrete.linear.basic.integral.relation}
  \begin{split}
    \frac12\Norm{e\conpart(\tm)}^2+&\int_0^\tm\enorm\rho^2
    =
    \frac12\Norm{e\conpart(0)}^2+\int_0^\tm\bbil\rho\epsic
    \\
    &+\sumifromto n1m \Big(\int_{t_{n-1}}^\tn
    \duality{\opera\w-\operan\wn}{e\conpart}
    \\
    &+\int_{t_{n-1}}^\tn\ltwop{\data n}{e\conpart}
    \Big)
    \\
    &+\int_0^\tm\ltwop{\pdt\U\dispart}{e\conpart}
    +
    \changes{\frac12\sum_{n=1}^{m-1}\big(\Norm{u(\tn)-\un\conpartplus}^2-\Norm{e\conpart(\tn)}^2\big)}
    \\
    =&:\cI_0+\cI_1(\tm)+\cI_2(\tm)+\cI_3(\tm)+\cI_4(\tm)\changes{+\cI_5(\tm)}.
  \end{split}
\end{equation}
To obtain the \aposteriori error bound for scheme
(\ref{eqn:fully-discrete-Euler}), we now bound each of $\cI_i(\tm)$,
$\rangefromto i1{\changes5}$ ($\cI_0$ needs no bounding) appearing in relation
(\ref{eqn:discrete.linear.basic.integral.relation}), in terms of
either a-posteriori-computable or left-hand-side quantities.


A term that substantially distinguishes the fully discrete case from
the semidiscrete one discussed in \secref{sec:semidiscrete} is the
\emph{time-discretization} term $\cI_2(\tm)$, so we start by bounding
this term.
\subsection{Time discretization estimate}
To bound $\cI_2(\tm)$ we start by working out the first factor of the
integrand as follows
\begin{equation}
  \label{eqn:time-estimate-integrand-split}
  \begin{split}
    \opera(s)&\w(s)-\operan\wn
    =
    \opera(s)\qb{\elln(s)\wn+\ellno(s)\wno_+}-\operan\wn
    \\
    =&
    \elln(s)\qb{\opera(s)-\operan}\wn
    +\ellno(s)\qb{\opera(s)-\operano}\wno_+
    \\
    &+\ellno(s)\qp{\operano\wno_+-\operan\wn}
  \end{split}
\end{equation}
Since $\wn$ and $e\conpart$ are both in $\honezw$, we may bound the first term with
\begin{equation}
  \begin{split}
    \langle[\opera(s)&-\operan]\wn\,\vert\,{e\conpart(s)}\rangle
    =
    \int_\W\qp{\qp{\mat a(s)-\mat a(\tn)}\grad\wn}\inner\grad e\conpart(s)
    \\
    =&
    \int_\W\qp{
    \sqrt{\mat a(s)}
    \qp{\difflipA n s}
    \sqrt{\mat a(\tn)}\grad\wn}
    \inner\grad e\conpart(s)
    \\
    \leq
    &\Norm{\abs{\difflipA n s}_2}_{\leb\infty(\W)}
    \Norm{\sqrt{\mat a(s)}\grad e\conpart(s)}
    \Norm{\sqrt{\mat a(\tn)}\grad\wn}
    \\
    =&
    \difflipa n(s)
    \enorm{\wn}
    \enorm{e\conpart(s)}.
  \end{split}
\end{equation}
The second factor above can be bounded as follows
\begin{equation}
  \begin{gathered}
    \enorm{\wn}^2=\duality{\operan\wn}{\wn}
    =\ltwop{\discan\un}{\wn}
    =\Norm{\discan\un}_{\sobh{-1}(\W)}\Norm{\nabla\wn}
    \\
    \leq\frac{\Norm{\discan\un}_{\sobh{-1}(\W)}}{\amin^n}\enorm{\wn},
  \end{gathered}
\end{equation}
where the last step owes to the fact that
\begin{equation}
  \amin^n\Norm{\grad\wn}^2
  \leq\bbilp n\wn\wn
  =\enorm{\wn}^2.
\end{equation}
Thus $\enorm{\wn}\leq\fracl{\Norm{\discan\un}_{\sobh{-1}(\W)}}{\amin^n}$
and we obtain
\begin{equation}
  \begin{split}
    \int_\tno^\tn&\elln(s)
    \duality{\,\qb{\opera(s)-\operan}\wn}{e\conpart(s)}\d s
    \\
    &\leq
    \frac{\Norm{\discan\un}_{\sobh{-1}(\W)}}{\amin^n}
    \qp{\int_\tno^\tn\elln(s)^2
      \difflipa n(s)^2\d s
    }^{1/2}
    \qp{\int_\tno^\tn\enorm{e\conpart}^2}^{1/2}
    \\
    &=\indoperafor n\sqrt{\taun}\Norm{e\conpart}_{\leb2(\In;\hopfesh)}.
  \end{split}
\end{equation}
Similarly, we obtain
\begin{equation}
  \begin{split}
    \int_\tno^\tn&\ellno(s)
    \duality{\qb{\opera(s)-\operano}\wno_+}{e\conpart(s)}
    \d s
    \\
    &\leq
    \frac{\Norm{\discano_+\interpol n\uno}_{\sobh{-1}(\W)}}
     {\amin^{n-1}}
     \qp{\int_\tno^\tn\ellno(s)^2
       \difflipa{n-1}(s)^2\d s
     }^{1/2}
     \qp{\int_\tno^\tn\enorm{e\conpart}^2}^{1/2}
    \\
    &=\indoperaback n\sqrt{\taun}\Norm{e\conpart}_{\leb2(\In;\hopfesh)}.
  \end{split}
\end{equation}

To estimate the resultant of the integrand's third term in
(\ref{eqn:time-estimate-integrand-split}), we recall the elliptic
reconstruction's definition and note that in view of
(\ref{eqn:fully-discrete-Euler:pointwise}) we may write, for \changes{$n\geq1$},
that
\begin{multline}
  \label{eqn:discrete.time-estimate:proof:rec-to-comp}
    \duality{\operano\wno_+-\operan\wn}{e\conpart}
    =
    \ltwop{\discano_+\interpol n\uno-\discan\un}{e\conpart}
    \\
    =
    \ltwop{\qb{\discano_+-\discan}\interpol n\uno}{e\conpart}
    +
    \ltwop{\discan\qb{\interpol n\uno-\un}}{\lproj n e\conpart}
\end{multline}
given that $\discan\interpol n\uno,\discan\un\in\fes n$.  The first
term on the right-hand side of
(\ref{eqn:discrete.time-estimate:proof:rec-to-comp}) is simply bounded by
\begin{equation}
  \ltwop{\qb{\discano_+-\discan}\interpol n\uno}{e\conpart}
  \leq
  \Norm{\qb{\discano_+-\discan}\interpol n\uno}_{\sobh{-1}(\W)}
  \Norm{\grad e\conpart},
\end{equation}
and thus, recalling definition
(\ref{eqn:fully-discrete:mesh-operator-indicator}), we have
\begin{equation}
  \int_\tno^\tn\ellno\ltwop{\qb{\discano_+-\discan}\interpol n\uno}{e\conpart}
  \leq
  \indoperamesh n\sqrt{\taun}\Norm{e\conpart}_{\leb2(\In;\hopfesh)}.
\end{equation}

The second term on the right-hand side of
(\ref{eqn:discrete.time-estimate:proof:rec-to-comp}) can be given a
simpler expression as follows:
\begin{equation}
  \begin{split}
    \ltwop{\discan\qb{\interpol n\uno-\un}}{\lproj n e\conpart}
    =&
    \bbilp n{\interpol n\uno-\un}{\lproj n e\conpart}
    \\
    \leq&
    \C{dgc}\enorm{\interpol n\uno-\un}\enorm{\lproj n e\conpart}
    \\
    \leq&
    \C{dgc}\C{els}\enorm{\interpol n\uno-\un}\enorm{e\conpart}
  \end{split}
\end{equation}
thanks to the stability of $\lproj n$ with respect to the energy norm
$\enorm\cdot$ assumed in (\ref{eqn:L2-projection-stability}).
Therefore, recalling definition
(\ref{eqn:fully-discrete:time-indicator}), we obtain
\begin{equation}
    \int_\tno^\tn
    \ellno\ltwop{\discan\qb{\interpol n\uno-\un}}{\lproj n e\conpart}
    \leq\indtime n\sqrt{\taun}\Norm{e\conpart}_{\leb2(\In;\hopfesh)}
\end{equation}

The time error estimate follows:
\begin{equation}
  \label{eqn:fully-discrete-estimate:time-indicator}
  \cI_2(\tm)
  \leq
  \sumifromto n1m\qp{\indopera n+\indtime n}\sqrt{\taun}
  \qp{\Norm\rho_{\espace{\tno,\tn}}
    +\Norm\epsic_{\espace{\tno,\tn}}
  }.
\end{equation}
\subsection{The other error estimates}
To bound the spatial error term, $\cI_1(t)$
in (\ref{eqn:discrete.linear.basic.integral.relation}), we simply
consider
\begin{equation}
  \label{eqn:fully-discrete-estimate:spatial-indicator}
  \cI_1(\tm)=
  \int_0^\tm\bbil\rho\epsic
  \leq
  \int_0^\tn\enorm{\rho}\enorm{\epsic},
\end{equation}
with the aim of absorbing the first factor in the
left-hand side of (\ref{eqn:discrete.linear.basic.integral.relation})
and using an elliptic error estimator to bound the second term.
\par
The term $\cI_3(\tm)$ in
(\ref{eqn:discrete.linear.basic.integral.relation}) which takes into
account data approximation:
\begin{equation}
  \cI_3(\tm)
  =
  \sumifromto n1m\int_\tno^\tn
  \ltwop{\frac{\interpol n\uno-\uno}\taun+\fn-f}{e\conpart}
  .
\end{equation}
The first term can be bounded by using the
$\pairing{\sobh{-1}(\W)}{\sobh1_0(\W)}$ pairing, as we did with the
time-estimator above:
\begin{multline}
  \sumifromto n1m\int_\tno^\tn
  \ltwop{\fraclpf{\interpol n\uno-\uno}\taun+\fn-f}{e\conpart}
  \\
  \leq
  \sumifromto n1m
  \qp{\indcoarse n+\indftimeosc n}\sqrt{\taun}
  \Norm{e\conpart}_{\espace{\tno,\tn}}
  ,
\end{multline}
where we have used definitions
(\ref{eqn:fully-discrete:time-data-indicator})
and~(\ref{eqn:fully-discrete:mesh-coarsening-indicator}).

Hence we obtain the bound
\begin{equation}
  \label{eqn:fully-discrete-estimate:data-indicator}
  \cI_3(\tm)
  \leq
  \sumifromto n1m\qp{\indfosc n+\indcoarse n}\sqrt{\taun}
  \qp{\Norm\rho_{\espace{\tno,\tn}}
    +\Norm\epsic_{\espace{\tno,\tn}}}.
\end{equation}
\par \changes{We estimate the second-last term} on the right-hand side of
(\ref{eqn:discrete.linear.basic.integral.relation}).  This term can be
bounded in two different ways.  For concision's sake we expose only the
estimate that yields smaller accumulation over long integration-times:
\begin{equation}
  \label{eqn:fully-discrete-estimate:nonconforming-indicator}
  \begin{split}
    \cI_4(\tm)
    =&\int_0^\tm\ltwop{\pdt \U\dispart}{e\conpart}
    \leq
    \sumifromto n1m
    \int_\tno^\tn
      \Norm{\pdt \U\dispart}_{\sobh{-1}(\W)}
      \Norm{\grad e\conpart}
      \\
    \leq&
    \sumifromto n1m
    \indnonconf n\sqrt{\taun}
    \qp{\Norm\rho_{\espace{\tno,\tn}}
      +\Norm{\epsic}_{\espace{\tno,\tn}}},
  \end{split}
\end{equation}
by recalling (\ref{eqn:fully-discrete:nonconformity-indicator}).
\par 
\changes{Observing the identity
  \begin{equation}
    \Norm{u(\tn)-\un\conpartplus}^2-\Norm{e\conpart(\tn)}^2
    =
    \Norm{\un\dispart-\un\dispartplus}^2
    +
    \ltwop{\un\dispart-\un\dispartplus}{e\conpart(\tn)},
  \end{equation}
we estimate the last term on the right-hand side of
(\ref{eqn:discrete.linear.basic.integral.relation}), as follows:
\begin{equation}\label{eqn:discrete.jump.indicator}
\begin{split}
    \cI_5(\tm)
    = &
\frac12\sum_{n=1}^{m-1}\big(\Norm{\un\dispart-\un\dispartplus}^2
+
\ltwop{\un\dispart-\un\dispartplus}{e\conpart(\tn)}\big)\\
\le &
\frac12\sum_{n=1}^{m-1}\big(\kappa_n^2\taun^2+\max_{1\le l\le m-1}\Norm{e\conpart(t_l)}\kappa_n\taun\big)\\
\le &
\frac{3}{4}\Big(\sum_{n=1}^{m-1}\kappa_n\taun\Big)^2
+\frac{1}{4}\max_{1\le l\le m-1}\Norm{e\conpart(t_l)}^2
\end{split}
\end{equation}}
\subsection{
  Concluding the proof of Theorem
  \ref{lem:fully-discrete:fundamental-aposteriori-estimate} }
\label{sec:fully-discrete:proof-basic-lemma}
Combining the energy relation
(\ref{eqn:discrete.linear.basic.integral.relation}) with the bounds
(\ref{eqn:fully-discrete-estimate:time-indicator}),
(\ref{eqn:fully-discrete-estimate:spatial-indicator}),
\changes{
(\ref{eqn:fully-discrete-estimate:data-indicator}),
(\ref{eqn:fully-discrete-estimate:nonconforming-indicator})
and
(\ref{eqn:discrete.jump.indicator}), we obtain}
\changes{
\begin{equation}\label{eqn:error.one}
  \begin{split}
  &      \frac12\Norm{e\conpart(\tm)}^2
     +
    \Norm{\rho}_{\espace{\tno,\tn}}^2\\
    \leq &
    \frac12\Norm{e\conpart(0)}^2+\frac{3}{4}\Big(\sum_{n=1}^{m-1}\kappa_n\taun\Big)^2
+\frac{1}{4}\max_{1\le l\le m-1}\Norm{e\conpart(t_l)}^2
    \\
    &+
    \sumifromto n1m
    \qp{\indtime n+\indopera n+\indfosc n+\indcoarse n+\indnonconf n}
    \sqrt{\taun}
    \Norm\epsic_{\espace{\tno,\tn}}
    \\
    &+
    \sumifromto n1m
    \qp{
      \qp{\indtime n+\indopera n+\indfosc n+\indcoarse n+\indnonconf n}
      \sqrt{\taun}
      +\Norm\epsic_{\espace{\tno,\tn}}}
    \Norm\rho_{\espace{0,\tm}}
    .
  \end{split}
\end{equation}
}
\changes{
Choosing $m=m_*$ so that $\Norm{e\conpart(t_{m_*})}=\max_{1\le l\le m-1}\Norm{e\conpart(t_l)}$
in (\ref{eqn:error.one}), yields a bound on $\max_{1\le l\le m-1}\Norm{e\conpart(t_l)}^2/4$, which is then used
again to bound the third term on the right-hand of (\ref{eqn:error.one}), resulting to
\begin{equation}\label{eqn:error.two}
  \begin{split}
  \Norm{\rho}&_{\espace{\tno,\tn}}^2
    \leq
    \Norm{e\conpart(0)}^2+\frac{3}{2}\Big(\sum_{n=1}^{m-1}\kappa_n\taun\Big)^2
    \\
    &+
    2\sumifromto n1m
    \qp{\indtime n+\indopera n+\indfosc n+\indcoarse n+\indnonconf n}
    \sqrt{\taun}
    \Norm\epsic_{\espace{\tno,\tn}}
    \\
    &+
    2\sumifromto n1m
    \qp{
      \qp{\indtime n+\indopera n+\indfosc n+\indcoarse n+\indnonconf n}
      \sqrt{\taun}
      +\Norm\epsic_{\espace{\tno,\tn}}}
    \Norm\rho_{\espace{0,\tm}}
    ,
  \end{split}
\end{equation}}
which is an inequality of the form
\begin{equation}
  \abs{\vec a}^2
  \leq c^2+\vec d\inner{\vec b}+\qp{\vec d+\vec b}\inner{\vec a},
\end{equation}
where $\vec a,\vec b,\vec d\in\R{m+1}$ and $c\in\reals$ are
appropriately chosen.  It follows that
\begin{equation}
  \abs{\vec a}\leq
  \max\setof{\abs c,\abs{\vec d}}
  +\abs{\vec d}+\abs{\vec b},
\end{equation}
which, using the notation introduced in the statement of the theorem,
implies
\begin{equation}
  \label{eqn:fully-discrete:proof-of-main-theorem:pre-elliptic}
  \Norm\rho_{\espace{0,\tm}}\leq
  \Norm{e\conpart(0)}
  +
  \sqrt{\frac{3}{2}}\sum_{n=1}^{m-1}\kappa_n\taun
  +
  3\parest m
  +
  \Norm{\epsic}_{\espace{0,\tm}}.
\end{equation}
To close the estimate, the last term on the right-hand side of
(\ref{eqn:fully-discrete:proof-of-main-theorem:pre-elliptic}) is
bounded by
\begin{equation}
  \Norm{\epsic}_{\espace{0,\tm}}
  \leq
  \Norm{\epsi}_{\espace{0,\tm}}
  +
  \Norm{U\dispart}_{\espace{0,\tm}}.
\end{equation}
The first term yields
\begin{equation}
  \begin{split}
    \Norm{\epsi}_{\espace{0,\tm}}^2
    =&
    \sumifromto n1m\int_\tno^\tn\enorm{\ellno(\wn_+-\un)+\elln\epsic^n}^2
    \\
    \leq&
    \sumifromto n1m\int_\tno^\tn\ellno\indelliptplus n^2+\elln\indellipt n^2
    \\
    \leq&
    \frac12\sumifromto n1m\qp{\indelliptplus n^2+\indellipt n^2}\taun
    =\frac12\ellest m^2.
  \end{split}
\end{equation}
Similarly, the second term yields
\begin{equation}
  \Norm{U\dispart}_{\espace{0,\tm}}^2
  \leq
  \frac12\sumifromto n1m\indnonconfel n^2\taun.
\end{equation}
Merging these inequalities with
(\ref{eqn:fully-discrete:proof-of-main-theorem:pre-elliptic}) and
using the triangle inequality we obtain
\begin{equation}
  \Norm e_{\espace{0,\tm}}
  \leq
  \Norm\epsi_{\espace{0,\tm}}+\Norm\rho_{\espace{0,\tm}}
\end{equation}
we obtain
(\ref{eqn:fully-discrete:fundamental-aposteriori-estimate}).
\begin{Remark}[short-time integration]
  In the spirit of Theorem \ref{apost_semi_thm_l1}, it is possible to modify Theorem
  \ref{lem:fully-discrete:fundamental-aposteriori-estimate} and the
  appropriate indicators as to accommodate a short time-integration
  version of this result where $\leb1$-accumulation in time replaces
  the $\leb2$-accumulation for certain estimators.  Over shorter
  time-intervals this provides a tighter bound.
\end{Remark}
\begin{Theorem}[\aposteriori energy-error bound for Euler--\ipdg]
\label{energy_error_bound_for_Euler_ipdg_thm}
  Under the same assumptions of Theorem
  \ref{lem:fully-discrete:fundamental-aposteriori-estimate}, assuming we employ the \ipdg method in space as described in
  \secref{sec:various-DG}, the error bound
  (\ref{eqn:fully-discrete:fundamental-aposteriori-estimate}) holds
  with the estimators $\parest m$, $\ellest m$ and $\sumifromto
  n1m\indnonconfel n^2\taun$ explicitly computable as follows:
  \begin{LetterList}
  \item
    for the elliptic indicators $\indellipt n$ and
    $\indelliptplus{n}$, replace $\cE$ by $\estfunc$, as defined in
    (\ref{est_ell}), into
    (\ref{eqn:fully-discrete:elliptic-indicator}) and
    (\ref{eqn:fully-discrete:forward-elliptic-indicator}),
    respectively;
  \item
    for the nonconforming part indicators $\indnonconf n$ and
    $\indnonconfel n$, respectively, (cf. \secref{sec:mesh-interaction-decomposition} and Lemma \ref{h_oswald}),
    we replace
    \begin{equation}
      \Norm{\un\dispart-\uno\dispartplus}_{\sobh{-1}(\W)},
      \quad
      \enorm{\un\dispart}
      \AND
      \enorm{\uno\dispartplus},
    \end{equation}
    respectively, by
    \begin{equation}
      \C{PF}\C1\Norm{\smash{\sqrt{\check h}_n}\jump{\un-\uno}},
      \quad
      \C2\Norm{\sqrt{\check\sigma_n}\jump{\un}},
    \end{equation}
    where $\check h_n$ is the mesh-size function of $\check\cT_n$ and
    $\check\sigma_n$ is related to it via
    (\ref{eqn:def:penalty-sigma});
  \item
    replace all $\sobh{-1}(\W)$ norms by $\C{PF}$ times the
    $\leb2(\W)$ norm.
  \end{LetterList}
\end{Theorem}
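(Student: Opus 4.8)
The plan is to obtain this result by instantiating the abstract estimate (\ref{eqn:fully-discrete:fundamental-aposteriori-estimate}) of Theorem \ref{lem:fully-discrete:fundamental-aposteriori-estimate} at the \ipdg discretisation; no new time-dependent argument is needed, and the shape of the bound is unchanged. The first step is to verify that \ipdg meets every structural hypothesis used in the proof of Theorem \ref{lem:fully-discrete:fundamental-aposteriori-estimate}. Consistency (\ref{cons}) and the energy-norm extension (\ref{norm_cons}) were already recorded in the Definition of the \ipdg method; the energy-norm stability of $\lproj n$ required in (\ref{eqn:L2-projection-stability}) and the DG-continuity of $\bbilp n\cdot\cdot$ required in (\ref{eqn:bilinear-dgspace-continuity}) are, respectively, (\ref{stability}) and (\ref{continuity}) of Lemma \ref{auxiliary_results} applied with $\mat a(\tn)$ on the mesh $\cT_n$, so one may take $\C{els}=\C{dgc}=C\maxcondloc$. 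Hence (\ref{eqn:fully-discrete:fundamental-aposteriori-estimate}) holds, and it remains only to render computable the quantities in its right-hand side that are still abstract: the elliptic indicators $\indellipt n,\indelliptplus n$ (which invoke an unspecified estimator $\cE$), the nonconforming-part functions $\un\dispart,\uno\dispartplus$ (which invoke an unspecified conforming/nonconforming splitting and occur through $\indnonconf n$, $\indnonconfel n$ and $\kappa_n$), and the remaining $\sobh{-1}(\W)$ norms appearing in $\indftimeosc n$, $\indcoarse n$, $\indnonconf n$ and the operator indicators.

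For item (a) I would invoke Lemma \ref{ell_ipdg_apost}: the functional $\estfunc$ of (\ref{est_ell}) satisfies the estimator inequality (\ref{apost_ell}) for the \ipdg discretisation of the elliptic model problem (\ref{elliptic_problem}), hence $\estfunc$ is an admissible choice for $\cE$ in Definition \ref{def:fully-discrete:error-indicators}. Substituting $\cE=\estfunc$ into (\ref{eqn:fully-discrete:elliptic-indicator}) gives $\indellipt n=\estfunc(\un,\mat a(\tn),\discan\un,\cT_n)$, which is explicitly computable once $\discan\un$ is formed (recall $\discan\un=\lproj n\fn-(\un-\interpol n\uno)/\taun$). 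For the forward indicator (\ref{eqn:fully-discrete:forward-elliptic-indicator}) one observes that $\bilform^{n-1}_+$ and $\discano_+$ are again an \ipdg bilinear form and its discrete operator---only with diffusion $\mat a(\tno)$ and with penalty and mesh-size built on $\cT_n$---and that $\discano_+\interpol n\uno\in\fes n\subset\leb2(\W)$; thus Lemma \ref{ell_ipdg_apost} applies verbatim and yields $\indelliptplus n=\estfunc(\interpol n\uno,\mat a(\tno),\discano_+\interpol n\uno,\cT_n)$. When the meshes are only $1$-irregular rather than regular one uses instead the irregular-mesh versions of Lemmata \ref{h_oswald} and \ref{ell_ipdg_apost} (see the remark following Lemma \ref{h_oswald}), at the price of constants depending on the maximal refinement and coarsening levels.

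For item (b) I would use Lemma \ref{h_oswald}, whose conforming-part operator on a given mesh is linear. The splitting of $\un$ that enters $\indnonconf n$, $\indnonconfel n$ and $\kappa_n$ is the one with respect to the coarsest common refinement $\check\cT_n$, and, after the index shift $n\mapsto n-1$, so is the splitting of $\uno$ hidden in $\uno\dispartplus$; since $\fes n,\fes{n-1}\subseteq\ccrfes n$, linearity gives $\un\dispart-\uno\dispartplus=(\un-\uno)\dispart$, the nonconforming part of $\un-\uno\in\ccrfes n$ with respect to $\check\cT_n$. Then (\ref{magic1}) yields $\Norm{\un\dispart-\uno\dispartplus}\le\C1\Norm{\sqrt{\check h_n}\jump{\un-\uno}}$, which makes $\kappa_n$ computable directly and makes the $\sobh{-1}(\W)$ norm in (\ref{eqn:fully-discrete:nonconformity-indicator}) computable after one more application of the dual inequality $\Norm v_{\sobh{-1}(\W)}\le\C{PF}\Norm v$. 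For the elliptic nonconforming indicator (\ref{eqn:fully-discrete:nonconformity-elliptic-indicator}), since $\un\conpart\in\sobhz1(\W)$ has vanishing jumps one has $\jump{\un\dispart}=\jump{\un}$, so (\ref{DG-norm}) combined with (\ref{magic2}) gives $\enorm{\un\dispart}\le C\Norm{\sqrt{\check\sigma_n}\jump{\un}}$ and likewise $\enorm{\uno\dispartplus}\le C\Norm{\sqrt{\check\sigma_n}\jump{\uno}}$; these are the claimed substitutions. Item (c) is then immediate: every surviving $\sobh{-1}(\W)$ norm is replaced by $\C{PF}$ times the corresponding $\leb2(\W)$ norm via the same dual inequality. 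Collecting these substitutions into $\parest m$, $\ellest m$ and $\sumifromto n1m\indnonconfel n^2\taun$ turns (\ref{eqn:fully-discrete:fundamental-aposteriori-estimate}) into a fully computable bound.

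The only real obstacle is bookkeeping rather than analysis: one must be careful about which mesh each object lives on---$\check\cT_n$ for the conforming/nonconforming splittings in $\indnonconf n$, $\indnonconfel n$, $\kappa_n$ versus $\cT_n$ for $\indellipt n$, $\indelliptplus n$ and the forward operator $\discano_+$---and one must spell out that the conforming-part operator of Lemma \ref{h_oswald} can be chosen linearly, so that the identity $\un\dispart-\uno\dispartplus=(\un-\uno)\dispart$ is legitimate. Once these points are in place the theorem follows by direct substitution into (\ref{eqn:fully-discrete:fundamental-aposteriori-estimate}).
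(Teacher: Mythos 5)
Your proposal is correct and follows exactly the route the paper intends for this theorem, which is stated without a separate proof precisely because it is a direct instantiation: verify the structural hypotheses (\ref{eqn:L2-projection-stability}) and (\ref{eqn:bilinear-dgspace-continuity}) via Lemma \ref{auxiliary_results}, take $\cE=\estfunc$ from Lemma \ref{ell_ipdg_apost} (also for the forward operator with $\mat a(\tno)$ on $\cT_n$), control the nonconforming parts through Lemma \ref{h_oswald} on the common refinement $\check\cT_n$, and replace residual $\sobh{-1}(\W)$ norms by $\C{PF}$ times $\leb2(\W)$ norms. Your explicit remarks on the linearity of the conforming-part operator (so that $\un\dispart-\uno\dispartplus=(\un-\uno)\dispart$) and on the mesh bookkeeping are exactly the points the paper leaves tacit, and they are handled correctly.
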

\changes{
\section{Computer experiments} 
\label{sec:computer-experiments}
In this final section we summarize the results of computer experiments
aimed at testing the efficiency and reliability of the fully discrete
estimators derived in \S~\ref{sec:fully-discrete-bounds}.  We built
our code upon the free finite element software
\Program{FEniCS}~\cite{Logg:fenics} while \matlab was used as an
end-tool to visualize the time-behavior of various estimators.

All the computational examples are in space dimension $d=2$ and their
choice is such as to illustrate as many aspect as possible of
practical convergence rate \aka{experimental order of convergence, in
  short EOC} and the effectivity index (EI), on uniform space-time
meshes, of the proposed a posteriori error indicators defined in
\S~\ref{def:fully-discrete:error-indicators}.
}
\changes{
\subsection{Benchmark solutions}
We consider three benchmark problems for which $u_0$ and $f$ are
chosen so that the exact solution $u$ of problem (\ref{pde}) coincides
with one of the following \emph{benchmark solutions}:
\begin{gather}
   \label{eqn:numerics:example1} 
   u_1(x,y,t) 
   = \sin(\pi t)\sin^2(\pi x)\sin^2(\pi y)
   ,
   \\
   \label{eqn:numerics:example2}
   u_2(x,y,t) 
   = \hat u_2(r,\phi,t) 
   = \sin(\pi t)(r^2 \cos^2(\phi) - 1)^2 (r^2 \sin^2(\phi) - 1)^2 r^{z_0} g(\phi) 
   ,\\
  \label{eqn:numerics:example3}
   u_3(x,y,t) 
   = \sin(20 \pi t)\sin^2(\pi x)\sin^2(\pi y)
\end{gather}
for $t\in [0,1]$ and
\begin{equation}
  (x,y)\in 
  \begin{cases}
    (0,1) \times (0,1)
    &\text{ in (\ref{eqn:numerics:example1}) and (\ref{eqn:numerics:example3})} 
    \\ 
    (-1,1)^2\setminus [0,1)\times
      (-1,0]
    &
    \text{ in (\ref{eqn:numerics:example2})}.
  \end{cases}
\end{equation}

To complete the definition of $u_2$ in (\ref{eqn:numerics:example2}) we
consider
\begin{equation}
  z_0:=0.544483736782464\text{ such that }
  \sin^2(z_0 \omega) = z_0^2 \sin^2(\omega),
  \text{ with }\omega=\fracl{3\pi}{2},
\end{equation}
and 
\begin{equation}
\begin{aligned}
  g(\phi)
  &
  := 
  \qpbigg{ 
    \frac{1}{z_0-1}\sin((z_0-1)\omega) 
    - \frac{1}{z_0+1}\sin((z_0+1)\omega)
  }
  \\
  &
  \phantom{:=}
  \times(\cos((z_0-1)\phi) - \cos((z_0+1)\phi)) 
  \\
  &
  \phantom{:=}
  -\qpbigg{
    \frac{1}{z_0-1}\sin((z_0-1)\phi) 
    - \frac{1}{z_0+1}\sin((z_0+1)\phi) 
  }
  \\
  &
  \phantom{:=-}
  \times(\cos((z_0-1)\omega) 
  - \cos((z_0+1)\omega)).
\end{aligned}
\end{equation}
It is well-known~\cite{grisvard:92,brenner-gudi-sung:09} that the
gradient of $u_2$ in (\ref{eqn:numerics:example2}) has a singularity
at the reentrant corner located at the origin of $\W$.

Solution $u_1$ is smooth and varies ``slowly'' in time.  Solution
$u_3$ is also smooth by it oscillates much faster and is used to
emphasize the time-error indicator appearing in the parabolic error
estimator $\parest m$, defined
in~(\ref{eqn:parabolic-error-esimator}).

Similar examples have been studied elsewhere, for example in
\cite{lakkis-makridakis:06,LakkisPryer:10}.

Note that the diffusion tensor, $\mat a(\vec x,t)$, is a constant
function (equal to $1$) of space-time and that the initial error
$\enorm{u(0)-U(0)}=0$ in all examples.
}
\changes{
\subsection{Computed quantities}
In each of the examples, we compute the solution of
(\ref{eqn:fully-discrete-Euler}) using finite element spaces
consisting of polynomials of degree $p$ equal to $1,2$ and $3$ with
interior penalty parameter $C_{\mat a,\mu(\cT)}$ in
(\ref{eqn:def:penalty-sigma}) having values $40, 80$ and $160$
respectively which are sufficient to guarantee stability of the
numerical scheme.

We study the asymptotic behavior of the indicators by setting all
constants appearing in Theorem
\ref{energy_error_bound_for_Euler_ipdg_thm} equal to $1$ and
monitoring the evolution of the values and experimental order of
convergence of the estimators and the error as well as effectivity
index over time on a sequence of uniformly refined meshes with a fixed
time step $\tau$ and polynomial degree $p$. For this purpose, we
define \emph{experimental order of convergence}, in symbols $\EOC$, of
a given sequence of positive quantities $a(i)$ defined on a sequence
of meshes of size $h(i)$ by
\begin{equation}
  \EOC( a,i ) = \frac{\log(a(i+1)/a(i))}{\log(h(i+1)/h(i))}
\end{equation}
and the \emph{inverse effectivity index}, $\EI$, by 
\begin{equation}
  \EI = \frac{\Norm e_{L_2(0,t_m;\mathcal{T})}}{\parest m + \ellest m}.
\end{equation}
We use the \emph{inverse effectivity index}, instead of the (direct)
effectivity index, because it is easier to visualize while conveying
the same information.  It also has the advantage of relating directly
to the constants appearing in Theorem
\ref{energy_error_bound_for_Euler_ipdg_thm}.
}
\subsection{Conclusions}
\changes{
The numerical experiments clearly indicate that the error estimators
are reliable (as expected from the theory) and efficient.  This is
clearly seen by the match in EOC between the error and the two main
estimators $\ellest m$ and $\parest m$ for each $m$.

Since we use time-invariant finite element spaces, the mesh-change
estimators are null and do not influence the estimators.

The nonconforming indicator $\pregsqrt{\frac12\sumifromto
  n1m\indnonconfel n^2\taun}$ was found to be of higher order with
respect the elliptic estimator, $\ellest m$.  This is most likely to
be an effect of using time-invariant meshes and the nonconforming
indicator can be safely ignored as long as the mesh does not change.

Adding mesh change, space-time-dependent diffusion $\mat a$, and
variable time-step to our numerical experiments will exhibit more
properties of the estimators but we eschew deeper numerical
experiments in this paper for concision's sake.  For the same reason,
the derivation of adaptive methods based on our indicators is omitted
here.

Results for problem (\ref{eqn:numerics:example1}) with $p=1$, problem
(\ref{eqn:numerics:example2}) with $p=2$ and problem
(\ref{eqn:numerics:example3}) with $p=3$ are depicted and commented
further in figures \ref{fig:e1p1}, \ref{fig:e2p2} and \ref{fig:e3p3}
respectively.
}
\newcommand{\figscale}{0.50}
\newcommand{\jpgscale}{0.35}
\newcommand{\figwidth}{0.95\textwidth}
\begin{figure}[ht]
  \caption{\label{fig:e1p1}Example with exact solution $u_1$, given by
    (\ref{eqn:numerics:example1}), approximated with piecewise
    polynomials of degree $p=1$.}
  \begin{center}
    \subfigure[{
        Mesh-size $h(i)=2^{-i/2}$, $i=2,\dotsc,8$, and timestep
        $\tau=0.1\,h$. On top we plot the EOC of the single
        cumulative indicators $\parest m$ and $\ellest m$. Both
        indicators have the same asymptotic $\EOC\approx1$ as has the
        error. The effectivity index tends towards $1/0.12$.
    }]{
      \includegraphics[trim =25pt 250pt 25pt 245pt, clip=true,width=\figwidth]{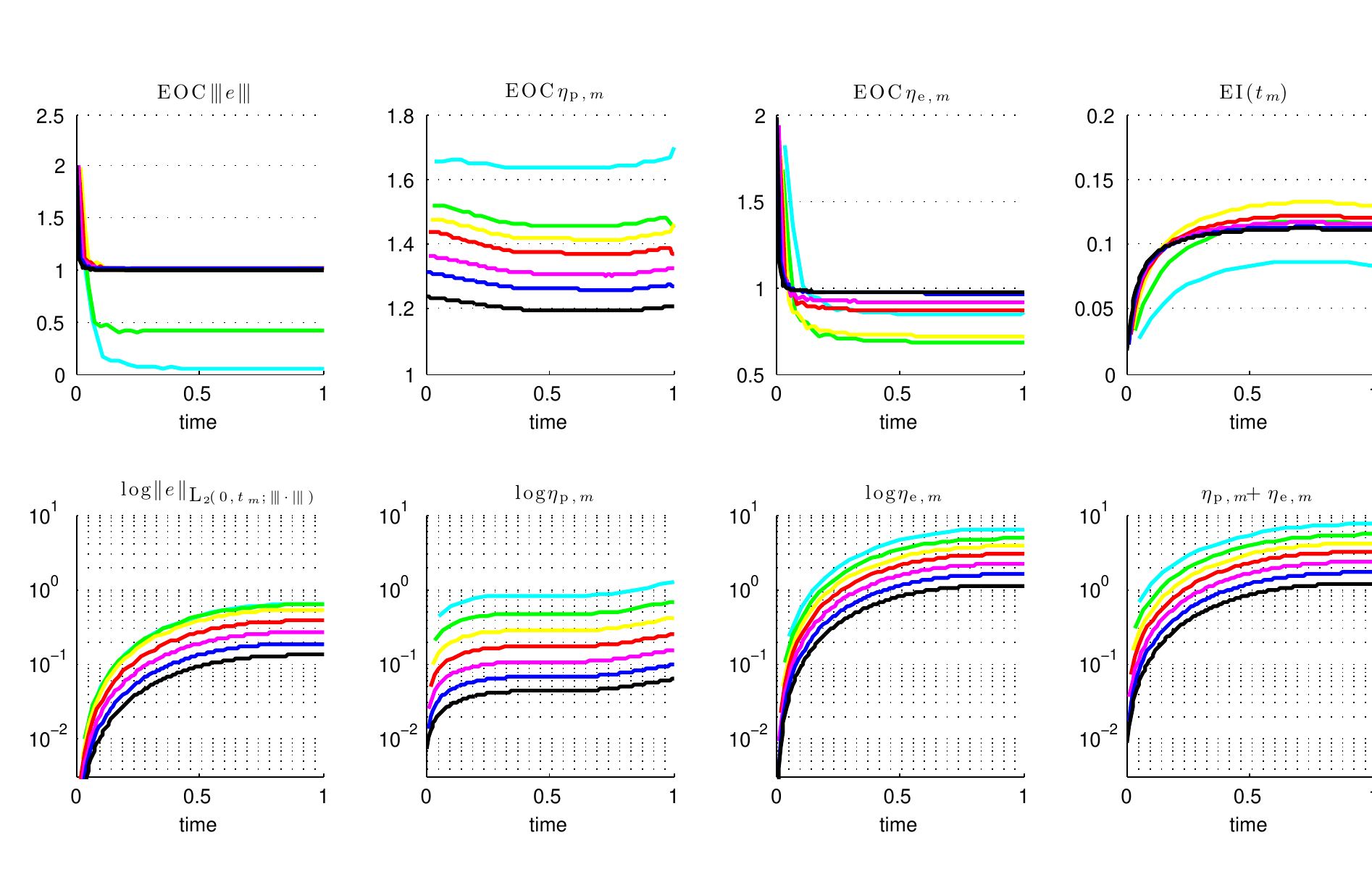}
    }
    \subfigure[{
        Mesh-size $h(i)=2^{-i/2}$, $i=2,\dotsc,8$, and timestep
        $\tau=0.1\,h^2$. On top we plot the EOC of the single
        cumulative indicators $\parest m$ and $\ellest m$. Both
        indicators have the same asymptotic $\EOC\approx1$ as has the
        error. The effectivity index tends towards $1/0.12$.
    }]{
      \includegraphics[trim =25pt 250pt 25pt 245pt, clip=true,width=\figwidth]{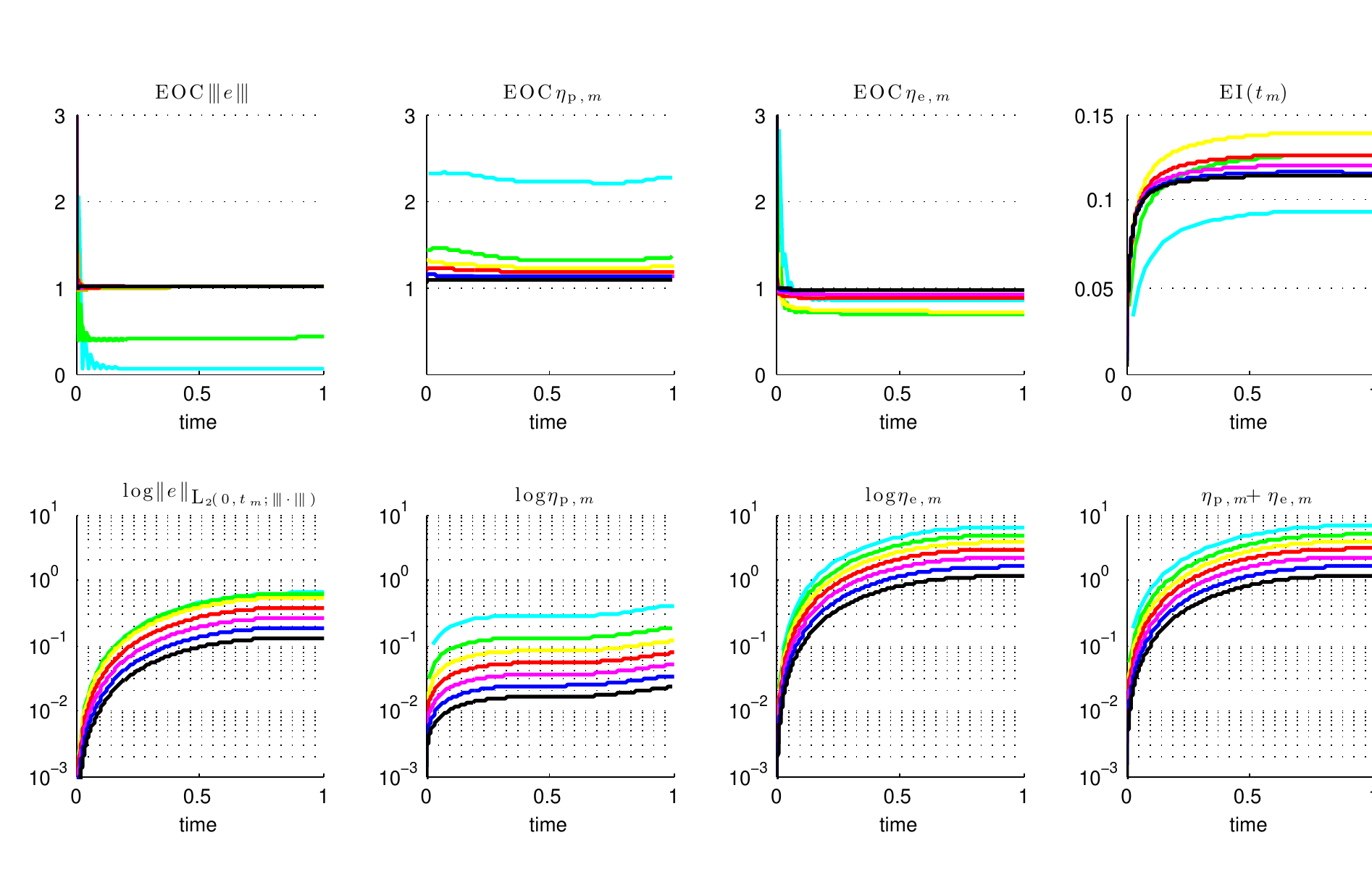}
    }
  \end{center}
\end{figure}
\begin{figure}[ht]
  \caption{\label{fig:e2p2}Example (\ref{eqn:numerics:example2}) with
    piecewise polynomials of degree $p=2$.}
  \begin{center}
    \subfigure[{
        \label{fig:} Mesh-size $h(i)=2^{-i/2}$, $i=2,\dotsc,5$, and
        timestep $\tau=0.1\,h^2$. On top we plot the EOC of the
        single cumulative indicators $\parest m$ and $\ellest
        m$. $EOC < 1$ for the error is due to lack of
        $H^2$-regularity. Note that the elliptic estimator has
        asymptotically the same EOC as the error.  
    }]{
      \includegraphics[trim =25pt 250pt 25pt 245pt, clip=true,width=\figwidth]{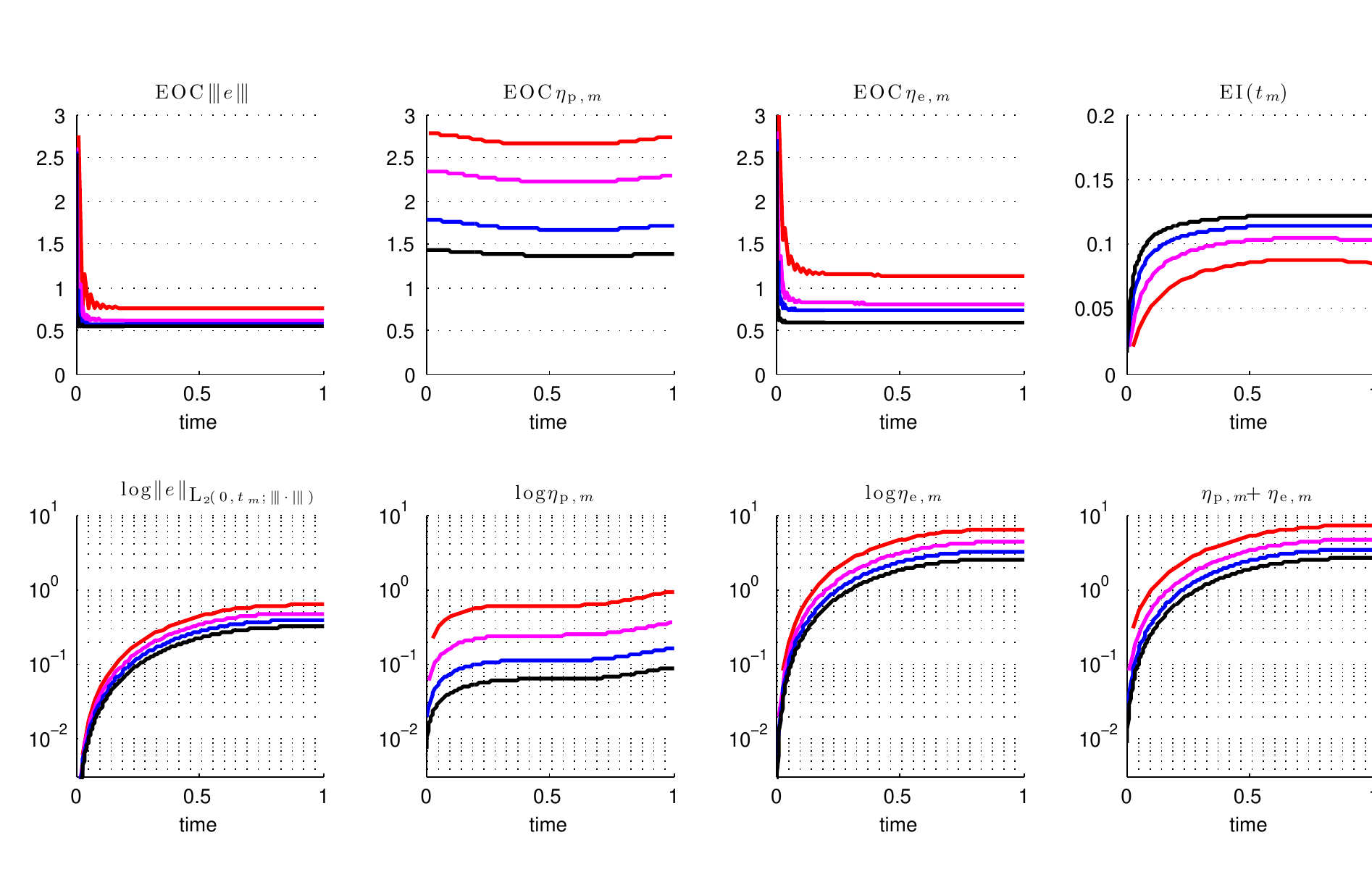}
    }
    \subfigure[{
        Mesh-size $h(i)=2^{-i/2}$, $i=2,\dotsc,4$ and timestep $\tau =
        0.1 h^3$. On top we plot the EOC of the single cumulative
        indicators $\parest m$ and $\ellest m$. $EOC<1$ for the
        error is due to lack of $H^2$-regularity. Note that the
        elliptic estimator has asymptotically the same EOC as the
        error.
    }]{
      \includegraphics[trim =25pt 250pt 25pt 245pt, clip=true,width=\figwidth]{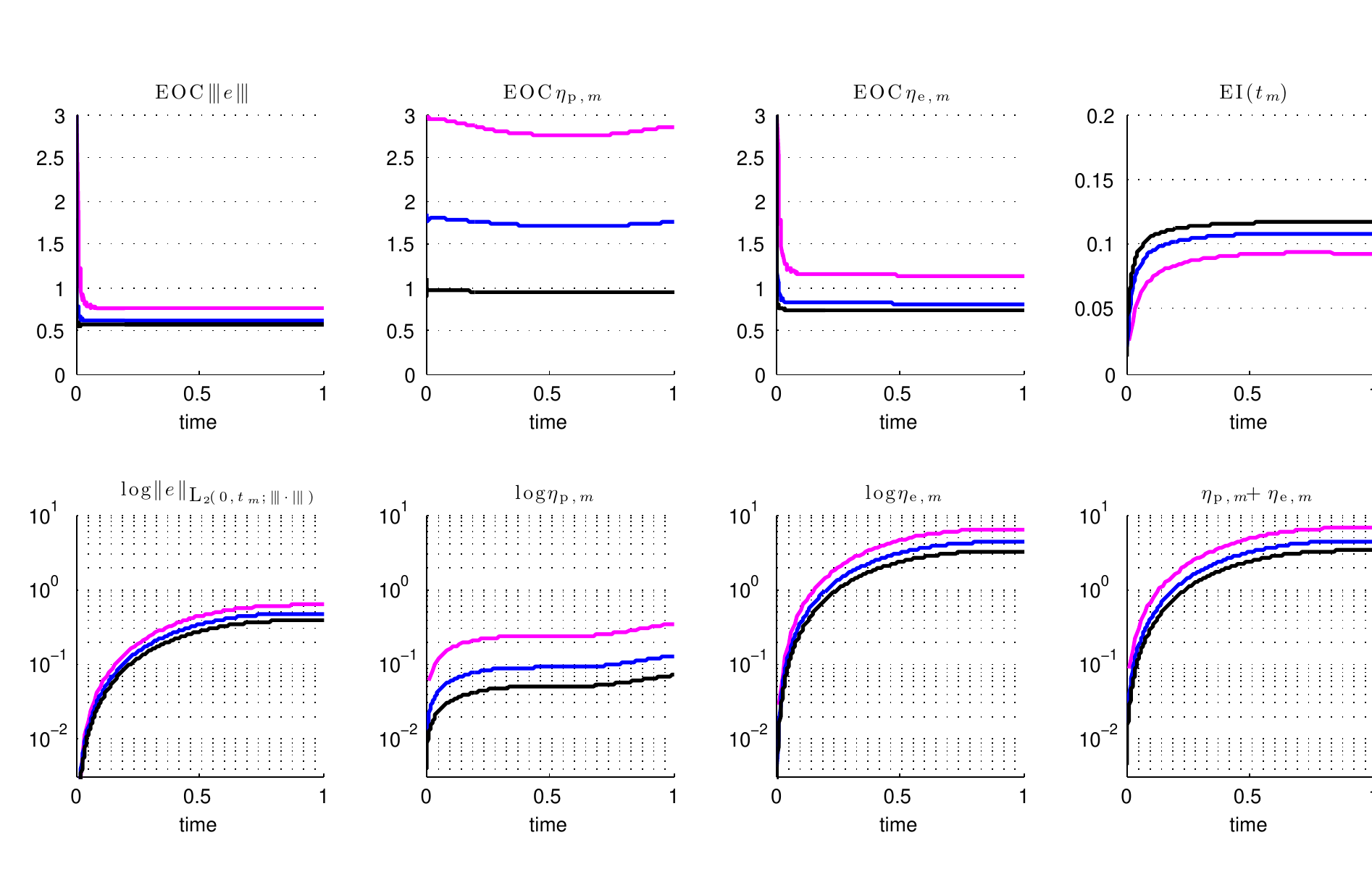}
    }
  \end{center}
\end{figure}
\begin{figure}[ht]
  \caption{\label{fig:e3p3} Example (\ref{eqn:numerics:example3}) with
    discontinuous piecewise polynomials of degree $p=3$.}
  \begin{center}
    \subfigure[{
        Mesh-size $h(i)=2^{-i/2}$, $i=2,\dotsc,5$ and timestep
        $\tau=0.1\,h^3$. On top we plot the EOC of the single
        cumulative indicators $\parest m$ and $\ellest m$. Both
        indicators have the same asymptotic $EOC \approx 1$ as has the
        error. The effectivity index tends towards asymptotic value
        $200$.
    }]{
      \includegraphics[trim =25pt 250pt 25pt 245pt, clip=true,width=\figwidth]{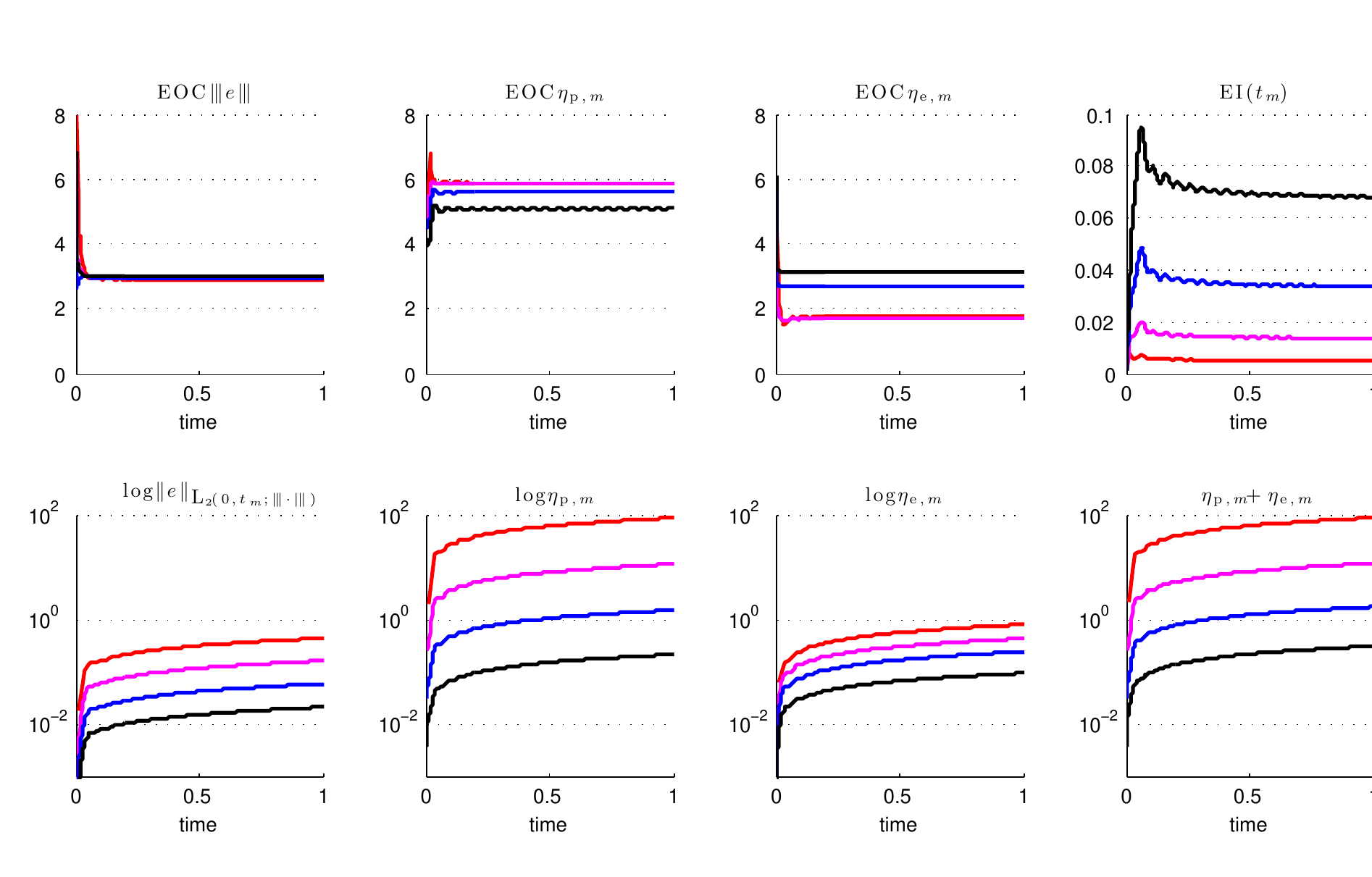}
    }
    \subfigure[{
        Mesh-size $h(i)=2^{-i/2}$, $i=2,\dotsc,4$ and timestep
        $\tau=0.1\,h^4$. On top we plot the EOC of the single
        cumulative indicators $\parest m$ and $\ellest m$. Both
        indicators have the same asymptotic $EOC\approx1$ as has the
        error. The effectivity index tends towards asymptotic value
        $200$
    }]{
      \includegraphics[trim =25pt 250pt 25pt 245pt, clip=true,width=\figwidth]{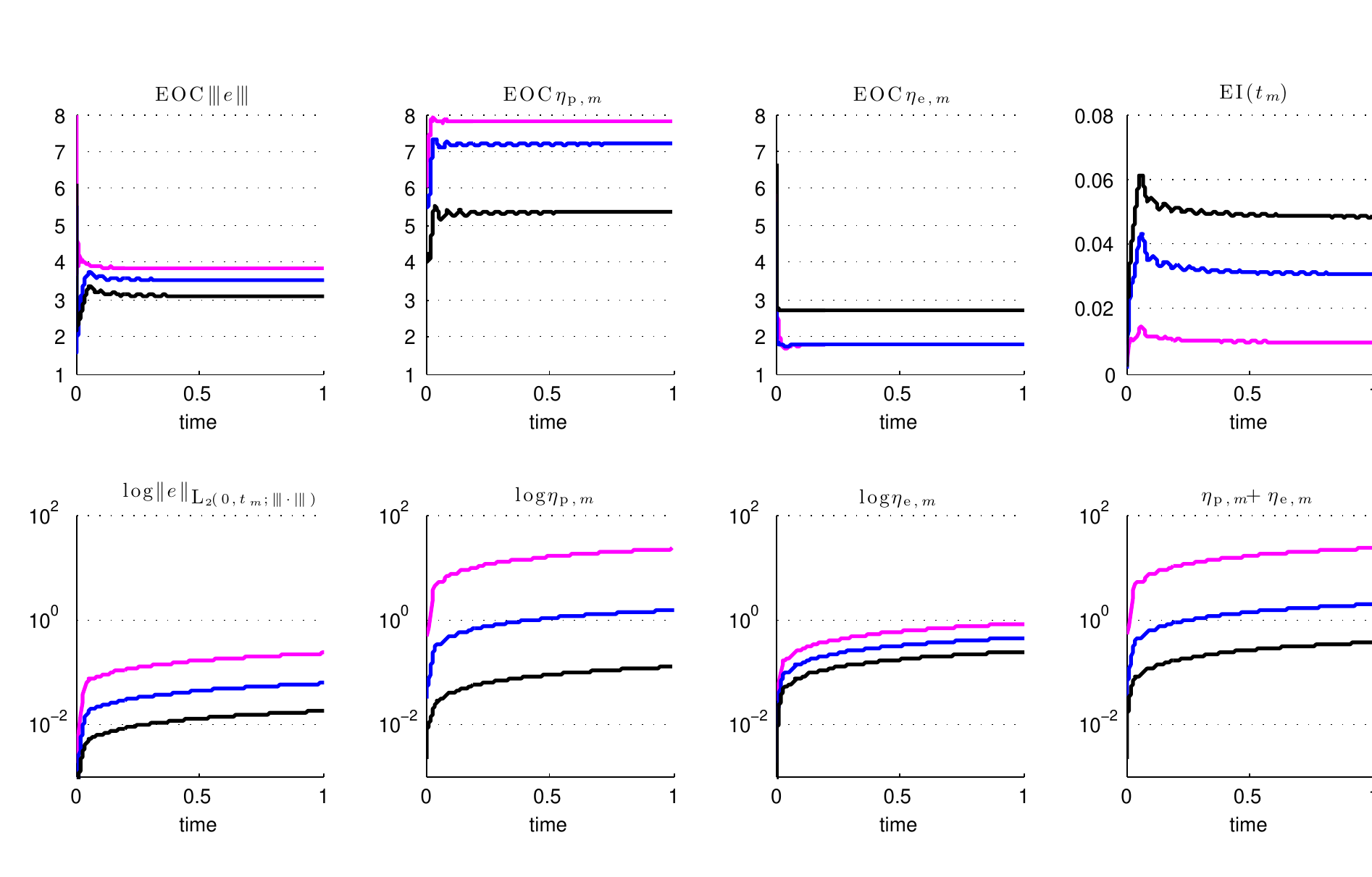}
       
    }
  \end{center}
\end{figure}

\def\cprime{$'$} \def\cprime{$'$} \def\cprime{$'$} \def\cprime{$'$}

\end{document}